\definecolor{mycolor}{HTML}{F7F8E0}
\definecolor{myorange}{RGB}{245,156,74}
\definecolor{cadetgrey}{rgb}{0.57, 0.64, 0.69}
\definecolor{calpolypomonagreen}{rgb}{0.12, 0.3, 0.17}
\newcommand\cyr{%
\renewcommand\rmdefault{wncyr}%
\renewcommand\sfdefault{wncyss}%
\renewcommand\encodingdefault{OT2}%
\normalfont
\selectfont}
\DeclareTextFontCommand{\textcyr}{\cyr}
\let\@wraptoccontribs\wraptoccontribs
\numberwithin{equation}{section}
\newtheorem{thm}{Theorem}[section]
\newtheorem{cor}[thm]{Corollary}
\newtheorem{prop}[thm]{Proposition}
\newtheorem{assu}[thm]{Assumption}
\newtheorem{conj}[thm]{Conjecture}
\theoremstyle{definition}
\newtheorem{defn}[thm]{Definition}
\newtheorem{caveat}[thm]{Caveat}
\newtheorem{exer}[thm]{Exercise}
\newtheorem{rem}[thm]{Remark}
\newtheorem{ques}[thm]{Question}
\newtheorem{exam}[thm]{Example}
\newcommand{\ks}{\boldsymbol{\kappa}}
\newcommand{\widedelta}{\widetilde{\delta}}
\newcommand{\sha}{\textrm{{\cyr SH}}}
\newcommand{\et}{\mathrm{\acute{e}t}}
\begin{document}
\title{A user's guide to Beilinson--Kato's zeta elements}
\author{Chan-Ho Kim}
\address{
Department of Mathematics and Institute of Pure and Applied Mathematics,
Jeonbuk National University,
567 Baekje-daero, Deokjin-gu, Jeonju, Jeollabuk-do 54896, Republic of Korea
}
\email{chanho.math@gmail.com}
\thanks{This research was partially supported 
by a KIAS Individual Grant (SP054103) via the Center for Mathematical Challenges at Korea Institute for Advanced Study,
by the National Research Foundation of Korea(NRF) grant funded by the Korea government(MSIT) (No. 2018R1C1B6007009, 2019R1A6A1A11051177),
by the International Centre for Theoretical Sciences (ICTS) for the online program - Elliptic curves and the special values of $L$-functions (code: ICTS/ECL2021/8), and
by Global-Learning \& Academic research institution for Master’s$\cdot$Ph.D. Students, and Postdocs (LAMP) Program of the National Research Foundation of Korea (NRF) funded by the Ministry of Education (No. RS-2024-00443714).
}
\date{\today}
\subjclass[2010]{11R23 (Primary); 11F33 (Secondary)}
\keywords{elliptic curves, Birch and Swinnerton-Dyer conjecture, Iwasawa theory, Kato's Euler systems}
\maketitle
\begin{abstract}
In his ground-breaking work \cite{kato-euler-systems}, K. Kato constructed the Euler system of Beilinson--Kato's zeta elements and proved spectacular results on the Iwasawa main conjecture for elliptic curves and the classical and $p$-adic Birch and Swinnerton-Dyer conjectures by using these elements.
The goal of this expository lecture note is to explain how Kato's Euler systems fit into the framework of the arithmetic of elliptic curves and their Iwasawa theory, and we hope that this approach eventually helps the reader to read Kato's original paper \emph{more easily and with less pain}.
% It is written for the proceedings of the program \emph{Elliptic curves and the special values of $L$-functions (ONLINE)} in ICTS in August 2021.
\end{abstract}
\setcounter{tocdepth}{1}
\tableofcontents

\section*{Prologue}
\subsection{}
It does not seem terribly bad to start with my personal experience.
From around 2014, I have started to read Kato's beautiful work on his Euler systems \cite{kato-euler-systems} and have failed so many times due to many reasons, mainly the lack of my mathematical maturity.
I still learn a lot from this paper when I read it again. 
My initial motivation for studying this paper was to develop the mod $p$ numerical criterion for verifying the main conjecture \cite{kks}.
From this perspective, I did not care about the construction of the Euler system too much for the first time, but I tried hard to understand the meaning of the final output of the construction. Even only for such a ``small" purpose, it turns out that I needed to understand the precise meaning of
$${}_{c,d}z^{(p)}_{m}(f, r, r' , \xi , S) \in \mathrm{H}^1(\mathbb{Q}(\zeta_m), T_f(k-r))$$
in \cite[(8.1.3) and Ex. 13.3]{kato-euler-systems} and
$$\mathbf{z}^{(p)}_\gamma  \in \varprojlim_n \mathrm{H}^1(\mathbb{Q}(\zeta_{p^n}), T_f)$$
in \cite[Thm. 12.5]{kato-euler-systems}. 
Ugh, I should go back to the construction to some extent and try to understand the behavior of these elements under Bloch--Kato's dual exponential map at least.
It required a considerable amount of time and energy for me at that time.
My aim is for this lecture note to make diving into Kato's original paper a breeze, minimizing the struggle and maximizing understanding.
\subsection{}
Although \cite{kato-euler-systems} was published in 2004, Kato's work was announced and was accepted by experts in early 1990's.
See \cite{kato-hasse-weil, kato-K2-modular-curves, kato-erl-zeta-values,kato-elliptic-curves-modular-forms} for his announcement in Japanese\footnote{These are all hand-written.}. Also, \cite{scholl-kato, rubin-es-mec} were written for the proceeding of the Durham conference held at 1996.
\subsection{}
There are now various expository papers and books on Kato's work from different viewpoints. See \cite{rubin-es-mec, scholl-kato, colmez-p-adic-BSD, delbourgo-book, wang_kato, ochiai-book-two, ochiai-book-two-translated} for example.

\subsection{}
As prerequisites, we expect that the reader knows some algebraic number theory \cite{neukirch}, Galois cohomology \cite[\S1]{rubin-book}, elliptic curves \cite{silverman} and modular forms \cite{diamond-shurman}.
The most important prerequisite would be a certain amount of experiences to try to read \cite{kato-euler-systems}.
\subsection{}
Of course, we are not able to explain every detail of Kato's paper here.
As we mentioned above, our goal is to explain how Kato's Euler systems apply to the arithmetic of elliptic curves.
We explain relatively small parts of his paper in details \emph{only for achieving this goal}.
In this sense, our exposition can be seen as employing a reverse-engineering approach.
Since it grew out from a \emph{lecture note}, this article contains some \emph{intentional} repetitions. 
We hope this structure make the reader more comfortable.

\section{Lecture 1: Overview and Birch and Swinnerton-Dyer conjecture}
In the first lecture, we illustrate how Kato's Euler systems work in the arithmetic of elliptic curves and to sketch how they are constructed briefly. Our primary focus is elliptic curves, and we will not cover higher weight modular forms in this lecture note.

\subsection{Rational points on elliptic curves}
How do we understand rational points on elliptic curves?
\begin{exam} \label{exam:two-elliptic-curves}
Consider two elliptic curves:
\begin{itemize}
\item $E_1: y^2 = x^3 -x$ (\href{https://www.lmfdb.org/EllipticCurve/Q/32/a/3}{32.a.3})
\item $E_2: y^2 +y = x^3 -x$ (\href{https://www.lmfdb.org/EllipticCurve/Q/37/a/1}{37.a.1})
\end{itemize}
Although the defining equations of these two curves do not look very different, their Diophantine properties are significantly different:
\[
\xymatrix{
E_1(\mathbb{Q}) \simeq \mathbb{Z}/2\mathbb{Z} \times \mathbb{Z}/2\mathbb{Z} , & E_2(\mathbb{Q}) \simeq \mathbb{Z} .
}
\]
How can we detect this difference?
\end{exam}
Let $E$ be an elliptic curve over $\mathbb{Q}$. 
As in Example \ref{exam:two-elliptic-curves}, the human eye cannot see whether $E(\mathbb{Q})$ is infinite or not, but $L$-functions see. This is called Birch and Swinnerton-Dyer conjecture.
In this sense, we observe an important inequality\footnote{I learned this inequality from the lecture series of Masato Kurihara at Postech, January 2012.}
$$\textrm{human eyes} < \textrm{$L$-functions}.$$
The following modularity theorem is essential for the statement of Birch and Swinnerton-Dyer conjecture as well as the applications of Kato's Euler systems to elliptic curves.
\begin{thm}[Wiles, Taylor--Wiles, $\cdots$, Breuil--Conrad--Diamond--Taylor] \label{thm:modularity}
There exists a one-to-one correspondence between
\begin{itemize}
\item $\mathbb{Q}$-isogeny classes of rational elliptic curves of conductor $N$ and
\item weight two cuspidal newforms with rational coefficients of level $\Gamma_0(N)$
\end{itemize}
satisfying $L(E,s) = \sum_{n \geq 1} \dfrac{a_n}{n^s}$ with $\mathrm{Re}(s) > 3/2$
where $f_E = \sum_{n \geq 1} a_n q^n \in S_2(\Gamma_0(N))$.
In particular, $L(E,s)$ is entire.
\end{thm}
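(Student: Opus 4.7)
The plan is to construct mutually inverse maps between the two sides and verify the $L$-function identity along the way. From a rational newform $f = \sum a_n q^n \in S_2(\Gamma_0(N))$, the Eichler--Shimura construction produces a quotient $J_0(N) \to A_f$, where $A_f$ has dimension equal to the degree of the Hecke eigenvalue field; rational Fourier coefficients force this dimension to be one, yielding an elliptic curve $E_f/\mathbb{Q}$ of conductor dividing $N$. The Eichler--Shimura congruence $T_\ell \equiv \mathrm{Frob}_\ell + \ell \cdot \mathrm{Ver}_\ell$ on $J_0(N)_{/\mathbb{F}_\ell}$, combined with $T_\ell f = a_\ell f$, identifies the Euler factor of $L(E_f,s)$ at $\ell \nmid N$ with that of $L(f,s)$; Carayol's theorem on conductors of automorphic Galois representations then matches the bad Euler factors and pins the conductor of $E_f$ to exactly $N$.

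The harder direction, modularity itself, is the work of Wiles and Taylor--Wiles extended to all rational elliptic curves by Breuil--Conrad--Diamond--Taylor. The strategy is to translate the statement into one about Galois representations: for each prime $p$, the $p$-adic Tate module yields a continuous representation $\rho_{E,p} : G_{\mathbb{Q}} \to \mathrm{GL}_2(\mathbb{Z}_p)$ with determinant the $p$-adic cyclotomic character, unramified outside $Np$, and of prescribed $p$-adic Hodge type at $p$ (potentially semistable with Hodge--Tate weights $0$ and $1$). I would aim to prove that $\rho_{E,p}$ is \emph{modular}, i.e.\ isomorphic to the Galois representation $\rho_{f,p}$ attached by Eichler--Shimura/Deligne to some weight-two newform $f$; comparing traces of Frobenius at unramified primes then forces $L(E,s) = L(f,s)$, and Faltings's isogeny theorem recovers the correspondence up to $\mathbb{Q}$-isogeny.

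To prove modularity of $\rho_{E,p}$, I would invoke a modularity lifting theorem of the form $R = T$. First one needs the residual representation $\overline{\rho}_{E,p} : G_{\mathbb{Q}} \to \mathrm{GL}_2(\mathbb{F}_p)$ to be itself modular: at $p = 3$ this follows from the Langlands--Tunnell theorem via solvable base change since $\mathrm{PGL}_2(\mathbb{F}_3) \cong S_4$ is solvable, and when $\overline{\rho}_{E,3}$ is reducible one invokes Wiles's $3$--$5$ switching trick, replacing $p=3$ by $p=5$ via an auxiliary modular elliptic curve whose mod-$5$ representation is isomorphic to $\overline{\rho}_{E,5}$. Assuming residual modularity, one constructs a universal deformation ring $R$ parametrising lifts of $\overline{\rho}_{E,p}$ with prescribed local conditions at $p$ and at ramified primes, together with a Hecke algebra $T$ acting on an appropriate space of modular forms, and proves $R \cong T$ by the Taylor--Wiles patching method.

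The principal obstacle is precisely the $R = T$ identification. The patching argument introduces auxiliary families of Taylor--Wiles primes at which the level is enlarged so that the patched rings $R_\infty$ and $T_\infty$ become power-series algebras over which the patched module becomes free; establishing this freeness uses the Wiles--Diamond--Fujiwara numerical criterion, which in turn requires a delicate count of a Selmer group $H^1_{\mathcal{L}}(G_{\mathbb{Q}}, \mathrm{Ad}^0 \overline{\rho}_{E,p})$ bounding congruences between the target newform and other modular forms. A second, equally serious obstacle appears when $\rho_{E,p}|_{G_{\mathbb{Q}_p}}$ is neither crystalline nor semistable: the local deformation rings at $p$ must be described through integral $p$-adic Hodge theory (Breuil modules, and later Kisin's refinements), and this is precisely where the Breuil--Conrad--Diamond--Taylor argument goes beyond Wiles's original framework to cover every rational elliptic curve.
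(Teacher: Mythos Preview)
Your outline is a faithful high-level sketch of the Wiles--Taylor--Wiles--BCDT strategy, and I see no genuine gap at the level of detail you are working at. However, the paper does not attempt to prove this theorem at all: its entire proof reads ``See \cite{wiles, taylor-wiles, bcdt}''. This is an expository lecture note on Kato's Euler systems, and the modularity theorem is quoted as background; a citation is the appropriate ``proof'' in that context, and your multi-paragraph sketch, while correct, is disproportionate to the paper's aims. One minor inaccuracy worth flagging: the $3$--$5$ switch is invoked not only when $\overline{\rho}_{E,3}$ is reducible but more generally when it fails the hypotheses needed for the lifting theorem (e.g.\ absolute irreducibility over $\mathbb{Q}(\sqrt{-3})$), and conversely one must also check that $\overline{\rho}_{E,5}$ is then usable.
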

It is known that the complex $L$-function $L(E,s)$ is an isogeny invariant.
\begin{proof}
See \cite{wiles, taylor-wiles, bcdt}
\end{proof}
The following conjecture is the most important problems in the arithmetic of elliptic curves \cite{bsd-notes-1,bsd-notes-2,tate-arithmetic-elliptic-curves,goldfeld-bsd}.
\begin{conj}[Birch--Swinnerton-Dyer]
Let $E$ be an elliptic curve over $\mathbb{Q}$.
\begin{enumerate}
\item $\mathrm{rk}_{\mathbb{Z}}E(\mathbb{Q}) = \mathrm{ord}_{s=1}L(E, s) (=r)$.
\item If (1) holds and Tate--Shafarevich group $\sha(E/\mathbb{Q})$ is finite, then
$$\lim_{s \to 1} \dfrac{L(E,s)}{(s-1)^r \cdot \Omega^+_E \cdot \mathrm{Reg}_E} = \dfrac{\# \sha(E/\mathbb{Q}) \cdot \mathrm{Tam}(E)}{ (\# E(\mathbb{Q})_{\mathrm{tors}})^2}$$
\end{enumerate}
where 
\begin{itemize}
\item $\mathrm{Reg}_E = \mathrm{det} \left( ( \langle x_i , x_j \rangle_{\mathrm{NT}} )_{i,j} \right)$
with $E(\mathbb{Q})/E(\mathbb{Q})_{\mathrm{tors}} = \oplus^r_{i=1} \mathbb{Z} x_i$,
\item $\langle - , - \rangle_{\mathrm{NT}}$ is the N\'{e}ron--Tate height pairing,
\item $\mathrm{Tam}(E) = \prod_{\ell \vert N} [E(\mathbb{Q}_\ell) : E_0(\mathbb{Q}_\ell)]$
with the preimage of the non-singular points $E_0(\mathbb{Q}_\ell)$ under the mod $\ell$ reduction, and
\item $\Omega^+_E = \int_{E(\mathbb{R})} \vert \omega_E \vert$, $\omega_E$ is a $\mathbb{Z}$-basis of $ \Omega^1_{\mathcal{E}/\mathbb{Z}}$ with the N\'{e}ron model $\mathcal{E}$ of $E$.
\end{itemize}
\end{conj}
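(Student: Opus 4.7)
The full Birch and Swinnerton-Dyer conjecture is of course open, so I can only propose an Euler-system attack on the pieces that Kato's construction actually reaches, namely the analytic rank $\leq 1$ case of (1) and a $p$-integral version of (2) at good ordinary primes. Modularity (Theorem~\ref{thm:modularity}) lets me attach to $E$ a weight-two newform $f_E$ and hence a $p$-adic Galois representation $T_f$. The plan is to construct global cohomology classes $\es^{(p)}_\gamma \in \varprojlim_n \mathrm{H}^1(\mathbb{Q}(\zeta_{p^n}), T_f)$ whose images under suitable period and dual-exponential maps interpolate twisted $L$-values, and then invoke the Euler-system machinery to bound Selmer groups in terms of these $L$-values.

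\textbf{BSD (1) via Euler systems.} For analytic rank zero, the plan is: build the zeta element from Beilinson/Siegel units on modular curves $Y(M,N)$ and their norm-compatible pushforwards; establish the explicit reciprocity law identifying the image of $\es^{(p)}_\gamma$ in $\mathrm{H}^1(\mathbb{Q}_p, V_f)$ under Bloch--Kato's dual exponential with the algebraic part $L(E,1)/\Omega^+_E$; and deduce that when this $L$-value is nonzero the base-level class is a nonzero element that is \emph{not} crystalline at $p$. Feeding the Euler-system norm relations into Kolyvagin/Rubin-style bounding then forces the $p$-Selmer group of $E$ to be finite for all but finitely many $p$, which gives finiteness of $E(\mathbb{Q})$ and of $\sha(E/\mathbb{Q})[p^\infty]$ in analytic rank $0$. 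Analytic rank $1$ would require coupling Kato with Gross--Zagier--Kolyvagin; analytic rank $\geq 2$ is beyond the reach of Euler systems, since they produce only upper bounds on Selmer groups.

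\textbf{BSD (2) via the Iwasawa main conjecture.} The zeta element lives naturally in Iwasawa cohomology; a Perrin-Riou-style big logarithm should send it to Mazur--Swinnerton-Dyer's $p$-adic $L$-function $L_p(E)$. Combining this with the global duality sequence for $T_f$ over $\mathbb{Q}(\zeta_{p^\infty})$ gives one divisibility in the cyclotomic main conjecture. Pairing this with the opposite divisibility (Skinner--Urban, under mild hypotheses) yields the main conjecture as an equality, and then a control theorem at the trivial character translates it into the $p$-part of the BSD leading-term formula up to a $p$-adic unit, for all but finitely many good ordinary $p$ in analytic rank $0$.

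\textbf{Expected obstacles.} The fundamental obstacle is BSD (1) for analytic rank $\geq 2$: certifying positivity of the Mordell--Weil or Selmer rank requires producing global classes that the Euler system simply cannot see. Within the rank-zero strategy, the genuinely hard technical step is the explicit reciprocity law, which must match a $p$-adic dual exponential computed from algebraic-geometric Siegel units against a transcendental period integral; the Iwasawa half additionally requires Perrin-Riou's big exponential and careful interpolation along the cyclotomic tower. The full leading-term formula also involves the N\'eron--Tate regulator $\mathrm{Reg}_E$ and the archimedean period $\Omega^+_E$, neither of which is directly accessible to any $p$-adic method — so even an optimistic run of the above plan recovers BSD (2) only up to $p$-adic units, not on the nose.
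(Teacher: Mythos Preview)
The statement you were asked to prove is labelled \texttt{conj} in the paper, not \texttt{thm}: it is the Birch and Swinnerton-Dyer conjecture itself, and the paper does not attempt to prove it. There is no ``paper's own proof'' to compare against; the conjecture is stated as context, and the surrounding sections then record partial results (Theorems~\ref{thm:kato-finiteness-mordell-weil}, \ref{thm:kato-finiteness-mordell-weil-sha}, \ref{thm:kato-divisibility}, \ref{thm:kato-almost-divisibility}, \ref{thm:skinner-urban}) in its direction.

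You clearly understood this, since your proposal opens by saying the full conjecture is open and then sketches exactly the partial progress the paper itself surveys: Kato's zeta element plus explicit reciprocity for analytic rank zero, the Iwasawa main conjecture divisibility, Skinner--Urban for the reverse divisibility, and control theorems to descend to the $p$-part of the leading-term formula. That is an accurate summary of what is known and of the paper's own narrative arc. But it is not a proof of the stated conjecture, nor does the paper contain one; so the right answer here is simply that the statement is a conjecture with no proof to compare, and your outline of the Euler-system partial results is consistent with (indeed, essentially a condensed version of) what the paper presents as the state of the art.
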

\subsection{Beilinson--Kato elements and the finiteness of rational points}
We first illustrate a simple application of (the bottom of) Beilinson--Kato elements to the arithmetic of elliptic curves.
\subsubsection{}
Let $p$ be a prime and $E$ an elliptic curve over $\mathbb{Q}$ without complex multiplication.
Let $T = \mathrm{Ta}_pE = \varprojlim_{n} E(\overline{\mathbb{Q}} )[p^k]$ be the $p$-adic Tate module
and
$V = V_pE = T \otimes_{\mathbb{Z}_p} \mathbb{Q}_p$ be the 2-dimensional $\mathbb{Q}_p$-vector space endowed with the continuous action of $G_{ \mathbb{Q} } = \mathrm{Gal}( \overline{\mathbb{Q}}/\mathbb{Q} )$.
Denote the corresponding Galois representation by $\rho : G_{ \mathbb{Q} } \to \mathrm{Aut}_{\mathbb{Q}_p}(V) \simeq \mathrm{GL}_2(\mathbb{Q}_p)$.
Let $\Sigma$ be a finite set of places of $\mathbb{Q}$ containing $p$, $\infty$, and bad reduction primes for $E$,
and denote by $\mathbb{Q}_{\Sigma}$ the maximal extension of $\mathbb{Q}$ unramified outside $\Sigma$.
Then the information of $E(\mathbb{Q})$ can be detected in Galois cohomology group $\mathrm{H}^1(\mathbb{Q}, V) = \mathrm{H}^1(\mathbb{Q}_{\Sigma}/\mathbb{Q}, V)$ via Kummer map
$$E(\mathbb{Q}) \otimes \mathbb{Q}_p \to \mathrm{H}^1(\mathbb{Q}, V)$$
which makes the connection between geometry and cohomology.
The same rule applies to the local case.
\subsubsection{} \label{subsubsec:local-galois-cohomology}
We first investigate the local nature of Galois cohomology at $p$.
\begin{exer}
Show that $\mathrm{H}^1(\mathbb{Q}_p, V)$ is a 2-dimensional $\mathbb{Q}_p$-vector space. (Hint: Use the local Euler characteristic formula.)
\end{exer}
The local Kummer map $E(\mathbb{Q}_p) \otimes \mathbb{Q}_p \hookrightarrow \mathrm{H}^1(\mathbb{Q}_p, V)$ embeds a 1-dimensional geometric object into a 2-dimensional cohomological one.
The Weil pairing
$$V \times V \to \mathbb{Q}_p(1)$$
induces a non-degenerate cup product pairing 
$$\langle -,- \rangle_p : \mathrm{H}^1(\mathbb{Q}_p, V) \times \mathrm{H}^1(\mathbb{Q}_p, V) \overset{\cup}{\to} \mathrm{H}^2(\mathbb{Q}_p, \mathbb{Q}_p(1)) \simeq \mathbb{Q}_p .$$
Under this pairing, we have the following orthogonality
$$E(\mathbb{Q}_p) \otimes \mathbb{Q}_p \perp E(\mathbb{Q}_p) \otimes \mathbb{Q}_p$$
due to local Tate duality.
We consider the commutative diagram
\[
\xymatrix@R=1.5em{
 \mathrm{H}^1(\mathbb{Q}_p, V) & \times &  \mathrm{H}^1(\mathbb{Q}_p, V) \ar[dd]^-{\mathrm{exp}^*_{\omega_E}} \ar[r] & \mathbb{Q}_p \ar@{=}[dd]\\
 E(\mathbb{Q}_p) \otimes \mathbb{Q}_p \ar@{^{(}->}[u] \\
 \mathbb{Q}_p \ar[u]^-{\simeq}_-{\mathrm{exp}_{\widehat{E}}} & \times & \mathbb{Q}_p \ar[r]  & \mathbb{Q}_p \\
 & & \mathrm{H}^0(E/\mathbb{Q}_p, \Omega^1) \ar[u]_-{\simeq}
}
\]
and explain the precise meaning of each term:
\begin{itemize}
\item  The map $\mathrm{exp}_{\widehat{E}} : \mathbb{Q}_p \to  E(\mathbb{Q}_p) \otimes \mathbb{Q}_p$ extends the formal exponential map
$\mathrm{exp}_{\widehat{E}} : p\mathbb{Z}_p \to \widehat{E}(p\mathbb{Z}_p)$.
Denote by $\omega^*_E$ the basis of the tangent space $\mathbb{Q}_p\omega^*_E$ of $E/\mathbb{Q}_p$ at the identity characterized by the natural pairing $\langle \omega_E , \omega^*_E \rangle = 1$.
If we identify the source $\mathbb{Q}_p$ with $\mathbb{Q}_p\omega^*_E$ by sending 1 to $\omega^*_E$, then the exponential map coincides with the Lie group exponential map \cite[pp. 64]{rubin-book}.
\item The latter $\mathbb{Q}_p$ in the diagram is isomorphic to the space of global 1-forms $\mathrm{H}^0(E/\mathbb{Q}_p, \Omega^1) = \mathbb{Q}_p \omega_E$, i.e. the cotangent space and  of $E/\mathbb{Q}_p$ at the identity,
by sending 1 to $\omega_E$.
\item The above dual exponential map $\mathrm{exp}^*_{\omega_E} : \mathrm{H}^1(\mathbb{Q}_p, V) \to \mathbb{Q}_p$ is the composition of Bloch--Kato's dual exponential map
$\mathrm{exp}^* : \mathrm{H}^1(\mathbb{Q}_p, V) \to \mathrm{H}^0(E/\mathbb{Q}_p, \Omega^1)$ and the above isomorphism $\mathrm{H}^0(E/\mathbb{Q}_p, \Omega^1) \simeq \mathbb{Q}_p$.
\item The bottom pairing of the diagram is given by multiplication: $(a, b) \mapsto a \cdot b$.
\end{itemize}
The characterization of the kernel of the dual exponential map is important for us.
We write $E(\mathbb{Q}_p) \otimes \mathbb{Q}_p := \left( E(\mathbb{Q}_p) \widehat{\otimes}_{\mathbb{Z}} \mathbb{Z}_p  \right) \otimes_{\mathbb{Z}_p} \mathbb{Q}_p$ for convenience.
\begin{equation} \label{eqn:kernel-dual-exp}
\mathrm{ker}(\mathrm{exp}^*_{\omega_E}) = E(\mathbb{Q}_p) \otimes \mathbb{Q}_p \subseteq \mathrm{H}^1(\mathbb{Q}_p, V) .
\end{equation} 
We now see the simplest form of Kato's work and feel its power for the first time.
\begin{thm}[Kato] \label{thm:kato-finiteness-mordell-weil}
There exists a global Galois cohomology class $z_{\mathbb{Q}} \in \mathrm{H}^1(\mathbb{Q}, V)$
such that
\[
\xymatrix@R=0em{
\mathrm{H}^1(\mathbb{Q}, V) \ar[r]^-{\mathrm{loc}_p} &
\mathrm{H}^1(\mathbb{Q}_p, V) \ar[r]^-{\mathrm{exp}^*} &
\mathbb{Q}_p \omega_E \\
z_{ \mathbb{Q} }  \ar@{|->}[rr] & & \mathrm{exp}^* (\mathrm{loc}_p ( z_{ \mathbb{Q} } ))
}
\]
and $$\mathrm{exp}^* (\mathrm{loc}_p ( z_{ \mathbb{Q} } )) = \dfrac{L^{(p)}(E,1)}{\Omega^+_E} \cdot \omega_E$$
where $L^{(p)}(E,1)$ is the $L$-value of $E$ at $s=1$ removing the Euler factor at $p$.
\end{thm}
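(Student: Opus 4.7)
The plan is to realize $z_{\mathbb{Q}}$ as the ``bottom layer'' of a Beilinson--Kato Euler system built from Siegel units on the modular curve, and then to evaluate $\mathrm{exp}^* \circ \mathrm{loc}_p$ by an explicit reciprocity law that reduces the computation to a Rankin--Selberg integral for $L(E,1)$.

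\textbf{Construction of $z_{\mathbb{Q}}$.} First I would work on $Y_1(N)$ where $N$ is the conductor of $E$. Take a pair of Siegel units $g_\alpha, g_\beta \in \mathcal{O}(Y_1(N))^\times$ (modular units supported on the cusps) and form their Steinberg symbol $\{g_\alpha, g_\beta\} \in K_2(Y_1(N))$. Applying the $p$-adic étale Chern class produces a class in $\mathrm{H}^2_{\et}(Y_1(N), \mathbb{Z}_p(2))$. Because $Y_1(N)$ is an affine curve over $\mathbb{Q}$, the Hochschild--Serre spectral sequence isolates a class in $\mathrm{H}^1\bigl(\mathbb{Q}, \mathrm{H}^1_{\et}(Y_1(N)_{\overline{\mathbb{Q}}}, \mathbb{Z}_p(2))\bigr)$. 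By Theorem \ref{thm:modularity}, $V = V_pE$ appears as the $f_E$-isotypic quotient of $\mathrm{H}^1_{\et}(Y_1(N)_{\overline{\mathbb{Q}}}, \mathbb{Q}_p)$ under the Hecke action, so projecting (with the appropriate Tate twist) produces the desired class $z_{\mathbb{Q}} \in \mathrm{H}^1(\mathbb{Q}, V)$.

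\textbf{Computing the dual exponential.} The map $\mathrm{exp}^*$ factors through the comparison between $p$-adic étale and de Rham cohomology via $\BdR$, so the calculation reduces to identifying the de Rham avatar of $\{g_\alpha,g_\beta\}$. Here the crucial input is the $p$-adic logarithm / syntomic regulator formula for Siegel units: $\mathrm{dlog}(g_\alpha)$ is an explicit weight-two Eisenstein series on $X_1(N)$. Pairing the resulting de Rham class against $\omega_E$ via Poincaré duality turns $\mathrm{exp}^*(\mathrm{loc}_p z_{\mathbb{Q}})$ into a Petersson-type integral of $f_E$ against an Eisenstein series. By Rankin's unfolding method, this integral equals $L(E,1)/\Omega^+_E$ up to the auxiliary parameters in $g_\alpha,g_\beta$, and the fact that these Siegel units are built at level prime to $p$ is what causes the Euler factor at $p$ to be removed.

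\textbf{Main obstacle.} The delicate step is the explicit reciprocity law itself: tracking $\{g_\alpha,g_\beta\}$ through the étale regulator, the de Rham comparison, the dual exponential, and the $f_E$-projection, while keeping periods, Tate twists, and Euler factors straight. This is the $p$-adic Hodge-theoretic core of Kato's construction, and essentially all subsequent applications to the arithmetic of $E$ rest on it. Once the reciprocity formula is in hand, the Rankin--Selberg computation and a careful normalization of $\omega_E$ against the motivic period finish the proof and produce exactly the factor $L^{(p)}(E,1)/\Omega^+_E$ claimed.
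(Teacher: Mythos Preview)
Your proposal is correct and follows essentially the same route the paper sketches in its ``very rough picture'' and in Lecture~4 (the paper states this theorem as Kato's result without a self-contained proof): Siegel units $\to$ Steinberg symbol in $K_2$ $\to$ Chern character and Hochschild--Serre $\to$ $f_E$-projection, with $\mathrm{exp}^*$ computed via $\mathrm{dlog}(\textrm{Siegel units}) =$ Eisenstein series and the Rankin--Selberg method for the zeta value formula. The paper's account adds the theta functions ${}_c\theta_E$ as the source of the Siegel units, the intermediate passage through $Y(M,N)$ before descending to $Y_1(N)\otimes\mathbb{Q}(\zeta_m)$, and the Soul\'{e} twist, but these are elaborations of precisely the strategy you outline.
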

\begin{cor}[Kato] \label{cor:kato-finiteness-mordell-weil}
If $\mathrm{rk}_{\mathbb{Z}}E(\mathbb{Q}) >0$, then $L(E, 1) = 0$.
\end{cor}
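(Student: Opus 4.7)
The natural route is to argue by contrapositive: assume $L(E,1) \neq 0$ and conclude $\mathrm{rk}_{\mathbb{Z}} E(\mathbb{Q}) = 0$. First I would choose a prime $p$ (of good reduction, say) for which the removed Euler factor does not kill the nonvanishing, so $L^{(p)}(E,1) \neq 0$ as well. By Theorem~\ref{thm:kato-finiteness-mordell-weil}, $\mathrm{exp}^*(\mathrm{loc}_p(z_{\mathbb{Q}})) \neq 0$, and then \eqref{eqn:kernel-dual-exp} gives the crucial fact $\mathrm{loc}_p(z_{\mathbb{Q}}) \notin E(\mathbb{Q}_p)\otimes \mathbb{Q}_p$.

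Now suppose for contradiction that some $P \in E(\mathbb{Q})$ has infinite order. Its Kummer image $\kappa(P) \in \mathrm{H}^1(\mathbb{Q}, V)$ is nonzero, and $\mathrm{loc}_p(\kappa(P)) \in E(\mathbb{Q}_p)\otimes \mathbb{Q}_p$. Replacing $P$ by a suitable integer multiple we may assume it lies in the formal group $\widehat{E}(p\mathbb{Z}_p)$, and since $P$ is non-torsion the formal logarithm gives $\mathrm{log}_{\widehat{E}}(P) \neq 0$. Using the commutative diagram of \S\ref{subsubsec:local-galois-cohomology}, the local pairing at $p$ factors through $\mathrm{exp}^*_{\omega_E}$ and $\mathrm{exp}_{\widehat{E}}^{-1}$, yielding
\[
\langle \mathrm{loc}_p(\kappa(P)), \mathrm{loc}_p(z_{\mathbb{Q}}) \rangle_p \;=\; \mathrm{log}_{\widehat{E}}(P) \cdot \frac{L^{(p)}(E,1)}{\Omega_E^+},
\]
which is nonzero by the two preceding observations.

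To derive a contradiction, I would invoke global Poitou--Tate reciprocity: for $x, y \in \mathrm{H}^1(\mathbb{Q}_\Sigma/\mathbb{Q}, V)$ one has $\sum_{v \in \Sigma} \langle \mathrm{loc}_v(x), \mathrm{loc}_v(y)\rangle_v = 0$. Apply this with $x = \kappa(P)$, $y = z_{\mathbb{Q}}$. At every $v \neq p$ the class $\mathrm{loc}_v(\kappa(P))$ lies in the local Kummer image $E(\mathbb{Q}_v)\otimes \mathbb{Q}_p$, which is the Bloch--Kato subgroup $\mathrm{H}^1_f(\mathbb{Q}_v, V)$ and is self-orthogonal under local Tate duality; granting that Kato's class $z_{\mathbb{Q}}$ also satisfies this local Selmer condition at each $v \neq p$, every such term vanishes. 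The global sum then equates a nonzero quantity with zero, a contradiction, so no such $P$ exists.

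The main obstacle in this plan is the assertion that $\mathrm{loc}_v(z_{\mathbb{Q}}) \in \mathrm{H}^1_f(\mathbb{Q}_v, V)$ for $v \neq p$. This is not visible from the formal apparatus in the excerpt; it rests on the geometric construction of the zeta element via Beilinson elements and $K_2$-classes on modular curves together with their compatibilities under the \'etale regulator — precisely the ingredient of Kato's paper that requires genuine effort and is the point where the \emph{global} input (Euler system relations) meets the \emph{local} input (images under $\mathrm{exp}^*$).
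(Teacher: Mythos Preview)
Your approach is essentially the same as the paper's: pair the global class $z_{\mathbb{Q}}$ against the Kummer image of a point of infinite order, invoke global reciprocity so that the sum of local pairings vanishes, and read off the result at $p$ via Theorem~\ref{thm:kato-finiteness-mordell-weil} and~\eqref{eqn:kernel-dual-exp}. The paper argues directly (assume $\mathrm{rk}>0$, deduce $L(E,1)=0$) rather than by contrapositive, but that is cosmetic.

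Where you diverge from the paper is precisely at your self-identified ``main obstacle,'' and here you are making life much harder than necessary. The paper does \emph{not} appeal to any geometric property of $z_{\mathbb{Q}}$ at places $\ell\neq p$; it simply observes that $\mathrm{H}^1(\mathbb{Q}_\ell,V)=0$ for every such place (including $\infty$). For $\ell\neq p,\infty$ this follows from the local Euler characteristic formula: the Euler characteristic is $0$, and $\mathrm{H}^0(\mathbb{Q}_\ell,V)=\bigl(\varprojlim_n E(\mathbb{Q}_\ell)[p^n]\bigr)\otimes\mathbb{Q}_p=0$ since $E(\mathbb{Q}_\ell)[p^\infty]$ is finite, whence $\mathrm{H}^2(\mathbb{Q}_\ell,V)=0$ by local duality and the Weil pairing, so $\mathrm{H}^1(\mathbb{Q}_\ell,V)=0$. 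For $\ell=\infty$ one uses $p>2$. Thus \emph{every} local term at $\ell\neq p$ vanishes trivially, and your condition $\mathrm{loc}_v(z_{\mathbb{Q}})\in\mathrm{H}^1_f(\mathbb{Q}_v,V)$ holds for the unexciting reason that the ambient group is zero. No input from the Beilinson construction or the \'etale regulator is required here; the argument closes with only the elementary local facts above plus the existence statement of Theorem~\ref{thm:kato-finiteness-mordell-weil}.
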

\begin{proof}
Let $P \in E(\mathbb{Q})$ be a point of infinite order.
Under the natural map
$$E(\mathbb{Q}) \hookrightarrow E(\mathbb{Q}_p) \to  E(\mathbb{Q}_p) \widehat{\otimes}_{\mathbb{Z}} \mathbb{Z}_p \to 
E(\mathbb{Q}_p) \otimes \mathbb{Q}_p ,$$
the image of $P$ generates $E(\mathbb{Q}_p) \otimes \mathbb{Q}_p$.
Since $z_{\mathbb{Q}} \in \mathrm{H}^1(\mathbb{Q}, V)$ and $P$ are global, the Artin reciprocity law implies that
$$\sum_{\ell \leq \infty} \langle \mathrm{loc}_\ell (z_\mathbb{Q}), P \rangle_{\ell} = 0.$$
Since $\mathrm{H}^1(\mathbb{Q}_\ell, V) = 0$ for every place $\ell \neq p$ (including the infinite place),
we have $\left\langle \mathrm{loc}_\ell (z_\mathbb{Q}), P \right\rangle_{p} = 0$.
By the self-orthogonality of $E(\mathbb{Q}_p) \otimes \mathbb{Q}_p$,
we have
$\mathrm{loc}_p(z_{\mathbb{Q}}) \in E(\mathbb{Q}_p) \otimes \mathbb{Q}_p$.
By (\ref{eqn:kernel-dual-exp}),
$\mathrm{exp}^* \circ \mathrm{loc}_p(z_{\mathbb{Q}})  = 0$.
Thus, $L(E,1) = 0$ by Theorem \ref{thm:kato-finiteness-mordell-weil}.
\end{proof}
The same result was also obtained by Kolyvagin via the use of Heegner points \cite{kolyvagin-euler-systems}.

This is the very starting point of Kato's Euler systems, and the cohomology class $z_{\mathbb{Q}}$ is just a part of a much deeper object.

\subsection{The finiteness of Selmer groups}
We recall the Selmer groups of $p$-power torsion points $E[p^\infty] = \bigcup_{n \geq 0} E[p^n]$ of $E$.
Then for $n \geq 1$, the \textbf{Selmer group of $E[p^n]$ over $\mathbb{Q}$} is defined by
$$\mathrm{Sel}(\mathbb{Q}, E[p^n]) := \mathrm{ker} \left(  \mathrm{H}^1(\mathbb{Q}_{\Sigma}/\mathbb{Q}, E[p^n])  \to \prod_{\ell \in \Sigma}  \dfrac{ \mathrm{H}^1(\mathbb{Q}_\ell, E[p^n]) }{ E( \mathbb{Q}_\ell )  \otimes \mathbb{Z}/p^n\mathbb{Z} }  \right)$$
and the \textbf{Selmer group of $E[p^\infty]$ over $\mathbb{Q}$} is defined by
$\mathrm{Sel}(\mathbb{Q}, E[p^\infty]) = \varinjlim_n \mathrm{Sel}(\mathbb{Q}, E[p^n])$ via the restriction maps.
\begin{exer}
Check that the definition of Selmer groups is independent of the choice of $\Sigma$. (The answer can be found in \cite[Cor. I.6.6]{milne-adt}.)
\end{exer}
The Selmer group $\mathrm{Sel}(\mathbb{Q}, E[p^\infty])$ fits into the fundamental exact sequence
\[
\begin{tikzcd}
0 \arrow[r] & E(\mathbb{Q}) \otimes \mathbb{Q}_p/\mathbb{Z}_p \arrow[r] & \mathrm{Sel}(\mathbb{Q}, E[p^\infty]) \arrow[r] & \sha(E/\mathbb{Q})[p^\infty] \arrow[r] & 0 ,
\end{tikzcd}
\]
and $\mathbb{Q}$ can be replaced by any algebraic extension of $\mathbb{Q}$.
\begin{rem}
The notion of Selmer groups can be generalized by imposing different local conditions (instead of the image of the local Kummer map).
This variation is very important for the Euler/Kolyvagin system argument.
\end{rem}
Kato's result on the finiteness of rational points (Corollary \ref{cor:kato-finiteness-mordell-weil}) can be strengthened including the finiteness of Tate--Shafarevich groups.
\begin{thm}[Kato] \label{thm:kato-finiteness-mordell-weil-sha}
Let $E$ be an elliptic curve without complex multiplication and $p$ be any prime.
\begin{enumerate}
\item If $L(E,1) \neq 0$, then $\mathrm{Sel}(\mathbb{Q}, E[p^\infty])$ is finite, so
$E(\mathbb{Q})$ and $\sha(E/\mathbb{Q})[p^\infty]$ are finite.
\item Let $F/\mathbb{Q}$ be a finite abelian extension and $\chi : \mathrm{Gal}(F/\mathbb{Q}) \to \mathbb{C}^\times$ be a character.
If the twisted $L$-value $L(E, \chi, 1) \neq 0$, then
the $\chi$-isotypic components of $E(F)$ and $\sha(E/F)[p^\infty]$ are finite, i.e.
\begin{align*}
E(F)^\chi & = E(F) \otimes_{\mathbb{Z}[\mathrm{Gal}(F/\mathbb{Q})], \chi} \mathbb{Z}[\chi] , \\
\sha(E/F)[p^\infty]^\chi & = \sha(E/F)[p^\infty] \otimes_{\mathbb{Z}[\mathrm{Gal}(F/\mathbb{Q})], \chi} \mathbb{Z}[\chi]
\end{align*}
are finite where $\mathbb{Z}[\chi]$ is the ring generated by the values of $\chi$ over $\mathbb{Z}$.
\end{enumerate}
\end{thm}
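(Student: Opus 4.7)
The plan is to invoke the full Euler system $\{z_m\}_{m}$, with $z_m \in \mathrm{H}^1(\mathbb{Q}(\zeta_m), T)$ extending Kato's class $z_\mathbb{Q}$ to all squarefree moduli $m$ (coprime to a suitable set) and satisfying norm relations of the shape
\[
\mathrm{cor}_{\mathbb{Q}(\zeta_{m\ell})/\mathbb{Q}(\zeta_m)}(z_{m\ell}) = P_\ell(\mathrm{Frob}_\ell^{-1}) \cdot z_m,
\]
together with the Kolyvagin--Rubin Euler system machine. That machine takes as input such a norm-compatible family \emph{plus} a non-triviality statement for the $p$-localization of the bottom class, and outputs an upper bound on a Bloch--Kato Selmer group over the chosen base field. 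The $L$-value hypothesis enters the argument only through the non-triviality input, by way of the explicit reciprocity law of Theorem \ref{thm:kato-finiteness-mordell-weil} and its $\chi$-twisted analog.

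For part (1), I would first note that $L(E,1) \neq 0$ already implies $L^{(p)}(E,1) \neq 0$: the removed Euler factor $1 - a_p p^{-1} + \epsilon p^{-1}$ is nonzero in the good case because $|a_p| < p+1$ by Hasse, nonzero ($1 \mp 1/p$) in the multiplicative case, and just $1$ in the additive case. Hence Theorem \ref{thm:kato-finiteness-mordell-weil} gives $\mathrm{exp}^* \mathrm{loc}_p(z_\mathbb{Q}) \neq 0$, which by (\ref{eqn:kernel-dual-exp}) means $\mathrm{loc}_p(z_\mathbb{Q}) \notin E(\mathbb{Q}_p) \otimes \mathbb{Q}_p$. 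Feeding this non-triviality together with $\{z_m\}$ into the Euler system argument yields a bound on $\#\mathrm{Sel}(\mathbb{Q}, E[p^\infty])$, forcing it to be finite; the fundamental exact sequence then delivers the claimed finiteness of $E(\mathbb{Q})$ and $\sha(E/\mathbb{Q})[p^\infty]$. For part (2), I would take $m$ divisible by the conductor of $\chi$ and form the $\chi$-component
\[
z_\chi := \sum_{\sigma \in \mathrm{Gal}(F/\mathbb{Q})} \chi^{-1}(\sigma) \, \sigma\bigl(\mathrm{cor}_{\mathbb{Q}(\zeta_m)/F}(z_m)\bigr) \in \mathrm{H}^1(F, T) \otimes_{\mathbb{Z}} \mathbb{Z}[\chi].
\]
A twisted explicit reciprocity law identifies $\mathrm{exp}^* \mathrm{loc}_p(z_\chi)$ with a nonzero algebraic multiple of $L^{(p)}(E, \chi^{-1}, 1) / \Omega_E^{\pm}$, the sign dictated by the parity of $\chi$, so the hypothesis $L(E,\chi,1) \neq 0$ again provides the required non-vanishing. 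Running the machine over $F$ then bounds the $\chi$-isotypic parts of $\mathrm{Sel}(F, E[p^\infty])$, hence of $E(F)^\chi$ and $\sha(E/F)[p^\infty]^\chi$.

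The main obstacle, which absorbs most of \cite{kato-euler-systems}, is not this deduction but its two inputs. On the geometric side, one must actually produce the family $\{z_m\}$ from Beilinson--Kato elements built out of products of Siegel units on modular curves, check the Euler-type norm relations at \emph{every} auxiliary prime $\ell$, and prove the explicit reciprocity formula converting $\mathrm{exp}^* \mathrm{loc}_p(z_m)$ into $p$-depleted $L$-values of all Dirichlet twists of $E$. On the algebraic side, one must verify the standard hypotheses needed to run the Kolyvagin--Rubin machine — in particular the big-image / absolute irreducibility condition on the mod $p$ Galois representation of $E$ — which for non-CM $E$ is furnished by Serre's open image theorem and therefore causes no essential trouble, although the finite list of small primes where the image is exceptional has to be handled separately.
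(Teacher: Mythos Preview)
Your proposal is correct and follows the same approach the paper eventually spells out in Lecture~3: feed the norm-compatible family $\{z_m\}$ into Rubin's Euler system machine (Theorem~\ref{thm:finiteness-rubin}), using the explicit reciprocity law (Theorem~\ref{thm:kato-finiteness-mordell-weil}) together with $L(E,1)\neq 0$ to guarantee that $\mathrm{loc}_p(z_{\mathbb{Q}})$ is non-trivial in $\mathrm{H}^1_{/f}(\mathbb{Q}_p,V)$. The only expository difference is that the paper separates the argument into two visible steps: Rubin's machine bounds the $p$-\emph{strict} Selmer group $\mathrm{Sel}_{\{p\}}(\mathbb{Q},W^*(1))$, and then a global Poitou--Tate duality diagram (the one displayed just before ``Theorem~\ref{thm:kato-finiteness-mordell-weil-sha} follows'') upgrades this to finiteness of the full $\mathrm{Sel}(\mathbb{Q},E[p^\infty])$, the passage being controlled precisely by the surjectivity of $\mathrm{loc}^s_p$ on rational coefficients---which is exactly your condition $\mathrm{loc}_p(z_{\mathbb{Q}})\notin E(\mathbb{Q}_p)\otimes\mathbb{Q}_p$. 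You have compressed these two steps into one sentence, but the content is the same.

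One small correction on the hypothesis side: for the bare \emph{finiteness} statement (as opposed to an exact bound), the relevant condition is $\mathrm{Hyp}(\mathbb{Q},V)$ of \S\ref{subsubsec:image}, which only asks that $V$ be irreducible. For a non-CM elliptic curve this holds at \emph{every} prime $p$, so there is no residual set of exceptional primes to ``handle separately'' here; that concern only arises for the sharper size or structure statements requiring $\mathrm{Hyp}(\mathbb{Q},T)$ or the Kolyvagin-system hypotheses.
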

Theorem \ref{thm:kato-finiteness-mordell-weil-sha}.(1) was also obtained by Kolyvagin \cite{kolyvagin-euler-systems} and Kolyvagin's method is based on Heegner points.
For the CM case, see \cite{coates-wiles-bsd-1977,rubin-tate-shafarevich,rubin-main-conj-img-quad-fields}.

We discuss a refined version of Theorem \ref{thm:kato-finiteness-mordell-weil-sha}.(1) for all but finitely many primes in Theorem \ref{thm:kato-almost-divisibility}.
Theorem \ref{thm:kato-finiteness-mordell-weil-sha} certainly needs \emph{more than} one element $z_{\mathbb{Q}}$ and now the theory of Euler systems comes in more seriously.
\begin{rem}
Euler systems can be viewed as the cohomological shape of $L$-functions.
By definition, Euler systems consist of families of Galois cohomology classes $z_{\mathbb{Q}(\zeta_{mp^n})}$ over abelian extensions of $\mathbb{Q}$ with norm compatibility. This norm compatibility remember all the Euler factors of $L$-functions\footnote{This is where the name `Euler system' comes from.}.
Furthermore, Euler systems recover $L$-values via explicit reciprocity laws, which are very deep results in general.
On the other hand, Kolyvagin systems can be viewed as the cohomological shape of the Taylor expansion of $L$-functions.
\end{rem}

\subsection{Iwasawa theory for elliptic curves}
We briefly introduce Iwasawa theory for elliptic curves \emph{with good ordinary reduction} to some extent to illustrate the strength of Kato's Euler systems. A more detailed discussion will follow in the second lecture. See also \cite{mazur-IMC-for-E, greenberg-intro,greenberg-lnm}.

One of the mottoes of Iwasawa theory is the following counter-intuitive principle:
\begin{quotation}
``Infinite extensions are easier than finite extensions to deal with."
\end{quotation}
The Iwasawa main conjecture\footnote{John Coates first called the corresponding statement for totally real fields the ``main conjecture"  \cite{coates-durham, coates-iwasawa}.} for elliptic curves can be viewed as the cyclotomic deformation of the Birch and Swinnerton-Dyer conjecture. 
\subsubsection{}
Let $p$ be an odd prime and $E$ an elliptic curve with good ordinary reduction at $p$, i.e. $p \nmid N\cdot a_p(E)$
where $a_p(E) = p+1 - \# \widetilde{E}(\mathbb{F}_p) = a_p$ (by Theorem \ref{thm:modularity}).
Consider the tower of cyclotomic fields\\
$$
\xymatrix@R=1em{
\mathbb{Q}_\infty \ar@{-}[d] \ar@/_1pc/@{-}[dd]_-{\mathbb{Z}_p \simeq}   \\
\mathbb{Q}_n \ar@{-}[d]^-{\simeq \mathbb{Z}/p^n\mathbb{Z}} & \textrm{ and  } & 
{\txt{let $\Lambda = \mathbb{Z}_p\llbracket \mathrm{Gal}(\mathbb{Q}_\infty/\mathbb{Q}) \rrbracket = \varprojlim_{n} \mathbb{Z}_p[\mathrm{Gal}(\mathbb{Q}_n/\mathbb{Q})] 
 \simeq \mathbb{Z}_p\llbracket T \rrbracket$ \\   be the cyclotomic Iwasawa algebra, \\ which is isomorphic to the one-variable power series ring over $\mathbb{Z}_p$. }  } \\
\mathbb{Q} 
}
$$
Here, the isomorphism sends a topological generator $\gamma \in \mathrm{Gal}(\mathbb{Q}_\infty/\mathbb{Q})$ to $1+T$.
\subsubsection{}
We recall the notion of characteristic ideals in (one-variable) Iwasawa theory.
When $M$ be a finitely generated torsion $\Lambda$-module, $M$ is isomorphic to $\bigoplus_{i} \Lambda / (f_i)$ 
where $f_i \in \Lambda$ \emph{up to finite kernel and cokernel}.
 More details will be given in the second lecture ($\S$\ref{subsec:Sel-Q-infty-main-conjecture}).
Then the characteristic ideal of $M$ is defined by
$$\mathrm{char}_{\Lambda} (M) = \left( \prod_{i} f_i \right) \subseteq \Lambda .$$
The $p$-adic $L$-function
$L_p(E) \in \Lambda$ \emph{remembers} all twisted $L$-values $L(E, \chi , 1)$
by the interpolation formula
$$\chi(L_p(E)) \doteq \tau(\overline{\chi}) \cdot \dfrac{L(E, \chi, 1)}{\Omega^{\chi(-1)}_E}$$
for every finite order character
$\chi : \mathrm{Gal}(\mathbb{Q}_\infty/\mathbb{Q}) \to \mathbb{C}^\times$.
In order to describe the interpolation formula more precisely, we need to fix embeddings
$\overline{\mathbb{Q}}_p \hookleftarrow \overline{\mathbb{Q}} \hookrightarrow \mathbb{C}$.
\subsubsection{}
The following non-vanishing result is fundamental \cite{rohrlich-nonvanishing,rohrlich-nonvanishing-2}.
\begin{thm}[Rohrlich] \label{thm:rohrlich-non-vanishing}
The $p$-adic $L$-function $L_p(E)$ is non-zero.
\end{thm}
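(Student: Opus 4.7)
The plan is to reduce the nonvanishing of $L_p(E) \in \Lambda$ to a statement about complex $L$-values, and then to establish that statement by a Mellin-transform averaging argument over twists. Since $\Lambda \simeq \mathbb{Z}_p\llbracket T \rrbracket$ and each finite-order character $\chi$ of $\mathrm{Gal}(\mathbb{Q}_\infty/\mathbb{Q})$ corresponds to the substitution $T \mapsto \zeta - 1$ with $\zeta$ a $p^n$-th root of unity, the $p$-adic Weierstrass preparation theorem tells us that a nonzero element of $\Lambda$ vanishes at only finitely many such points. Combined with the interpolation formula $\chi(L_p(E)) \doteq \tau(\overline{\chi}) \cdot L(E,\chi,1)/\Omega^{\chi(-1)}_E$ (and the nonvanishing of Gauss sums and of the periods), this reduces the theorem to showing $L(E,\chi,1) \neq 0$ for some---equivalently, for all but finitely many---finite-order characters $\chi$ of $\mathrm{Gal}(\mathbb{Q}_\infty/\mathbb{Q})$.

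To establish this analytic nonvanishing, I would use the Mellin integral representation $\Gamma(s) (2\pi)^{-s} L(E,\chi,s) = \int_0^\infty f_E^\chi(iy)\, y^{s-1}\, dy$ for the $\chi$-twisted cusp form $f_E^\chi$, and study the character-averaged sum
\[
\Sigma_n := \sum_{\substack{\chi \bmod p^n \\ \chi\ \text{primitive}}} \overline{\tau(\chi)} \cdot L(E,\chi,1)
\]
for each $n \geq 1$. Opening the Dirichlet series, using Gauss-sum identities to convert $\chi$-twists into additive characters, and invoking orthogonality of Dirichlet characters should rewrite $\Sigma_n$ as a linear combination---with coefficients controlled uniformly in $n$---of partial sums of the Fourier coefficients of $f_E$ indexed by residue classes modulo $p^n$. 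The leading term of this combination involves the Fourier coefficient $a_1(f_E) = 1$ and is manifestly nonzero.

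The main obstacle will be to control the error terms, arising from contributions of integers $m \not\equiv 1 \pmod{p^n}$, so that they do not overwhelm the leading contribution. This is where the analytic machinery of modular forms enters: Deligne's bound $|a_m| \leq d(m) \sqrt{m}$ on Hecke eigenvalues, a judicious truncation of the Mellin integral at height $y_n \asymp p^{-n/2}$, and the modular transformation law of $f_E$ applied to fold the tail of the integral back onto a bounded region should together produce an effective lower bound of the form $|\Sigma_n| \geq c \cdot \phi(p^n)^{1 - \varepsilon}$ for all $n$ sufficiently large. Granted such an estimate, $\Sigma_n \neq 0$ forces that at least one individual summand $L(E,\chi,1)$ is nonzero for some primitive $\chi$ of conductor $p^n$, and feeding this back through the Weierstrass-preparation reduction of the first paragraph yields $L_p(E) \neq 0$.
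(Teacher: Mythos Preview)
The paper does not prove this theorem at all: it is stated as a black-box input and attributed to Rohrlich \cite{rohrlich-nonvanishing,rohrlich-nonvanishing-2}, with no argument given. So there is no ``paper's own proof'' to compare against; you are supplying content where the expository paper deliberately only gives a citation.

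That said, your reduction in the first paragraph is entirely correct and is the standard way to pass from the analytic statement to the $p$-adic one: Weierstrass preparation plus the interpolation formula reduces $L_p(E) \neq 0$ to the nonvanishing of $L(E,\chi,1)$ for all but finitely many $\chi$ of $p$-power conductor. Your sketch in the second and third paragraphs is also broadly faithful to what Rohrlich actually does: he works with the Mellin integral for the twisted cusp form, averages over characters of conductor $p^n$, and shows that the contribution of the first Fourier coefficient $a_1 = 1$ dominates the remaining terms once $n$ is large, using Deligne's Ramanujan bound to control the tail. So the strategy you outline is the right one.

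Where your proposal remains a sketch rather than a proof is precisely in the sentence beginning ``The main obstacle will be\ldots'': you correctly identify the crux but do not carry it out. The error estimate you want does not fall out of a naive truncation; Rohrlich's argument hinges on exploiting the functional equation (equivalently, the $\tau \mapsto -1/N\tau$ modularity) to fold the integral, and on a careful bookkeeping of the imprimitive characters and the residue classes that survive orthogonality. Your formula for $\Sigma_n$ with the weight $\overline{\tau(\chi)}$ is one reasonable normalization, but you should be aware that the Dirichlet series for $L(E,\chi,s)$ does not converge at $s=1$, so ``opening the Dirichlet series'' must be done inside the convergent integral representation, not termwise in the $L$-series. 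None of this is fatal---it is exactly what Rohrlich does---but as written your third paragraph is a plan, not an argument.
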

We now state the Iwasawa main conjecture  \`{a} la Greenberg--Mazur \cite{mazur-IMC-for-E,greenberg-lnm}.
\begin{conj}[Iwasawa main conjecture] \label{conj:imc-mazur-greenberg}
The Selmer group $\mathrm{Sel}(\mathbb{Q}_\infty, E[p^\infty])$ is $\Lambda$-cotorsion, and
\begin{equation} \label{eqn:imc-mazur-greenberg}
\left( L_p(E) \right) = \mathrm{char}_{\Lambda}(\mathrm{Sel}(\mathbb{Q}_\infty, E[p^\infty])^\vee) 
\end{equation}
as ideals of $\Lambda$.
\end{conj}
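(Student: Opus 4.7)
The plan is to follow Kato's strategy, which via his Euler system gives one divisibility in (\ref{eqn:imc-mazur-greenberg}); the reverse divisibility lies outside the reach of Euler systems alone and must be supplied externally (for instance by Skinner--Urban's work on the Eisenstein ideal for $\mathrm{GU}(2,2)$).

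First, I would promote the single class $z_{\mathbb{Q}}$ of Theorem \ref{thm:kato-finiteness-mordell-weil} to the full Euler system of Beilinson--Kato zeta elements $\{ z_{\mathbb{Q}(\zeta_{mp^n})} \}$ over cyclotomic fields and pass to its Iwasawa-theoretic limit
\[
\mathbf{z}^{(p)}_\gamma \in \varprojlim_n \mathrm{H}^1(\mathbb{Q}(\zeta_{p^n}), T) .
\]
The norm compatibility among the $z_{\mathbb{Q}(\zeta_{mp^n})}$ encodes the Euler factors at the auxiliary primes $\ell \nmid mp$ and is precisely what activates the Euler system machinery.

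Second, I would feed this family into the general Euler/Kolyvagin system argument of Rubin, Kato, and Mazur--Rubin, which converts such a system into an upper bound on a Selmer group. Under good ordinary reduction at $p$, the relevant Selmer structure on $T$ over $\Lambda$ is the unramified (or, equivalently, Greenberg) local condition at $p$, and this matches $\mathrm{Sel}(\mathbb{Q}_\infty, E[p^\infty])$ via Poitou--Tate duality. Modulo the standard non-triviality hypotheses on the residual representation $\overline{\rho}$, the output is a divisibility
\[
\mathrm{char}_{\Lambda} \left( \mathrm{Sel}(\mathbb{Q}_\infty, E[p^\infty])^\vee \right) \supseteq \left( \mathcal{L}( \mathbf{z}^{(p)}_\gamma ) \right) ,
\]
where $\mathcal{L}$ is a suitably normalised local-at-$p$ map (a Coleman / Perrin-Riou big logarithm, specialising at each finite layer to the dual exponential that appeared in Theorem \ref{thm:kato-finiteness-mordell-weil}). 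Non-vanishing of the right-hand side, combined with Rohrlich's Theorem \ref{thm:rohrlich-non-vanishing}, will simultaneously yield the $\Lambda$-cotorsion assertion.

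Third, the bridge between the Euler system and $L_p(E)$ is Kato's \emph{explicit reciprocity law}, which identifies $\mathcal{L}(\mathbf{z}^{(p)}_\gamma)$ with $L_p(E)$ up to a unit in $\Lambda$, promoting the above divisibility to
\[
\mathrm{char}_{\Lambda} \left( \mathrm{Sel}(\mathbb{Q}_\infty, E[p^\infty])^\vee \right) \supseteq \left( L_p(E) \right) .
\]
The main obstacle will be this explicit reciprocity law: propagating the clean dual-exponential computation of Theorem \ref{thm:kato-finiteness-mordell-weil} up the entire cyclotomic tower requires Kato's heavy $p$-adic Hodge-theoretic input (syntomic cohomology and $(\varphi, \Gamma)$-modules, together with the comparison to modular forms), and this is where the bulk of \cite{kato-euler-systems} is spent. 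The reverse divisibility needed for equality cannot be obtained by this method and requires genuinely different technology.
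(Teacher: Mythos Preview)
The statement you are ``proving'' is labelled a \emph{conjecture} in the paper (Conjecture~\ref{conj:imc-mazur-greenberg}), not a theorem, and accordingly the paper gives no proof of it. What the paper does is exactly what you sketch: it records Kato's one-sided divisibility $(L_p(E)) \subseteq \mathrm{char}_{\Lambda}(\mathrm{Sel}(\mathbb{Q}_\infty, E[p^\infty])^\vee)$ under the large image hypothesis (Theorem~\ref{thm:kato-divisibility}), explains that this comes from the Euler system machinery applied to $z_{\mathbb{Q}_\infty}$ together with the Coleman map $\mathrm{Col}(\mathrm{loc}_p z_{\mathbb{Q}_\infty}) = L_p(E)$ (your steps~1--3, and the paper's \S\ref{subsec:coleman-maps} and \S\ref{subsec:global-duality}), and then cites Skinner--Urban (Theorem~\ref{thm:skinner-urban}) for the reverse divisibility under separate hypotheses. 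So your proposal is not a proof of the conjecture but rather a correct summary of the state of the art as the paper presents it.

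Two points worth sharpening. First, your ``standard non-triviality hypotheses on $\overline{\rho}$'' is vaguer than what is actually needed: Kato's integral divisibility requires that $\mathrm{im}(\rho)$ contain a conjugate of $\mathrm{SL}_2(\mathbb{Z}_p)$, while Skinner--Urban requires $E[p]$ irreducible together with a prime $\ell \Vert N$ at which $E[p]$ is ramified. These hypotheses are not the same and do not cover all elliptic curves, so even combining both results does not settle Conjecture~\ref{conj:imc-mazur-greenberg} in general. Second, the equivalence you implicitly use between the Euler-system bound on the $p$-strict Selmer group $\mathrm{Sel}_0$ (Kato's formulation, Conjecture~\ref{conj:kato-main-conj-sel0}) and the Greenberg--Mazur formulation for $\mathrm{Sel}$ is mediated in the paper by the global duality exact sequence of \S\ref{subsec:global-duality}; this is where good ordinary reduction enters, via the filtration $0 \to T^0 \to T \to T^1 \to 0$, and is worth making explicit rather than folding into ``the relevant Selmer structure.''
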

By using his Euler systems, Kato proved the one-sided divisibility of Conjecture \ref{conj:imc-mazur-greenberg} \cite[Thm. 17.4]{kato-euler-systems}.
\begin{thm}[Kato] \label{thm:kato-divisibility}
If the image of $\rho$ contains a conjugate of $\mathrm{SL}_2(\mathbb{Z}_p)$,
then $\mathrm{Sel}(\mathbb{Q}_\infty, E[p^\infty])$ is $\Lambda$-cotorsion and
\begin{equation} \label{eqn:kato-divisibility}
\left( L_p(E) \right) \subseteq \mathrm{char}_{\Lambda}(\mathrm{Sel}(\mathbb{Q}_\infty, E[p^\infty])^\vee) .
\end{equation}
\end{thm}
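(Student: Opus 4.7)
The plan is to deform the single class $z_{\mathbb{Q}}$ of Theorem \ref{thm:kato-finiteness-mordell-weil} over the cyclotomic tower, package the resulting family as an Euler system, prove an explicit reciprocity law linking it to $L_p(E)$, and feed the output into the standard Kolyvagin system bounding machinery.

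First, I would construct a norm-compatible family of Beilinson--Kato classes
$$\left\{\, z_{\mathbb{Q}(\zeta_{mp^n})} \in \mathrm{H}^1(\mathbb{Q}(\zeta_{mp^n}), T) \,\right\}_{m, n}$$
indexed by squarefree $m$ prime to $Np$ and $n \geq 0$, satisfying the Euler system norm relations with Euler factors $P_\ell(\mathrm{Fr}_\ell^{-1})$ at primes $\ell \mid m$. These arise from Beilinson elements in $K_2$ of modular curves via cup products of Siegel units; the norm relations are geometric consequences of distribution relations among Siegel units. Taking the inverse limit over $n$ yields
$$\mathbf{z} \in \mathrm{H}^1_{\mathrm{Iw}}(\mathbb{Q}_\infty, T) := \varprojlim_n \mathrm{H}^1(\mathbb{Q}_n, T),$$
a natural $\Lambda$-module.

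Second, and this is where I expect the main obstacle, I would establish the explicit reciprocity law: the localization $\mathrm{loc}_p(\mathbf{z})$, composed with Perrin-Riou's big dual exponential map (equivalently the Coleman map attached to $\omega_E$), coincides with $L_p(E)$ as an element of $\Lambda$. This is the Iwasawa-theoretic deformation of the bottom formula of Theorem \ref{thm:kato-finiteness-mordell-weil}: specialization at a finite order character $\chi$ of $\mathrm{Gal}(\mathbb{Q}_\infty/\mathbb{Q})$ must recover $L(E, \chi, 1)/\Omega_E^{\chi(-1)}$ up to an explicit Euler factor and Gauss sum. The technical heart is a delicate comparison between Beilinson--Kato elements, modular symbols, and the $p$-adic Hodge theory of $V$ (periods computed in $\BdR$), and this is where Kato invests most of his effort.

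Third, I would run the Kolyvagin system machinery. The hypothesis that $\mathrm{Im}(\rho)$ contains a conjugate of $\mathrm{SL}_2(\mathbb{Z}_p)$ guarantees absolute irreducibility of all residual quotients of $T$ together with a sufficient supply of Chebotarev primes, so Kolyvagin's derivative construction applied to $\{z_{\mathbb{Q}(\zeta_{mp^n})}\}$ produces a nontrivial Kolyvagin system over $\Lambda$. The Mazur--Rubin bound then yields
$$\mathrm{char}_\Lambda\!\left( \mathrm{Sel}(\mathbb{Q}_\infty, E[p^\infty])^\vee \right) \ \textrm{divides} \ \mathrm{char}_\Lambda\!\left( \mathrm{H}^1_{\mathrm{Iw}}(\mathbb{Q}_\infty, T)/\Lambda\cdot \mathbf{z} \right),$$
modulo Poitou--Tate bookkeeping relating strict and Greenberg Selmer conditions through the local cohomology at $p$. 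Combining this with the explicit reciprocity law converts the right-hand side into $(L_p(E))$, yielding the desired inclusion. The cotorsion statement is then automatic: since $L_p(E) \neq 0$ by Theorem \ref{thm:rohrlich-non-vanishing}, the bounded Selmer dual must itself be $\Lambda$-torsion.
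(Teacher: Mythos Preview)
Your outline is correct and matches the paper's exposition of Kato's argument: construct the Beilinson--Kato Euler system in Iwasawa cohomology, apply the Euler/Kolyvagin system machinery under the large-image hypothesis to bound the strict Selmer group by $\mathrm{H}^1_{\mathrm{Iw}}/\Lambda\mathbf{z}$ (this is Theorem~\ref{thm:kato-divisibility-sel0}), and then use the Coleman map together with the Poitou--Tate exact sequence of \S\ref{subsec:global-duality} to pass from the strict Selmer group to the Greenberg Selmer group and from $\mathbf{z}$ to $L_p(E)$. The only organizational difference is that the paper separates these two stages cleanly (first the reduction-type-independent Conjecture~\ref{conj:kato-main-conj-sel0}, then the ordinary-specific Coleman map comparison), whereas you fold them together; both are legitimate.
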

The opposite divisibility is proved by Skinner--Urban for a large class of elliptic curves \cite{skinner-urban} (Theorem \ref{thm:skinner-urban}).
Theorem \ref{thm:kato-divisibility} yields an upper bound of Selmer groups.
\subsubsection{}
\begin{thm} \label{thm:kato-almost-divisibility}
Let $E$ be an elliptic curve over $\mathbb{Q}$ without complex multiplication.
Suppose that $L(E,1) \neq 0$.
For all but finitely many primes $p$, we have divisibility
$$\#\sha(E/\mathbb{Q})[p^\infty] \Big\vert \dfrac{L(E,1)}{\Omega^+_E} .$$
\end{thm}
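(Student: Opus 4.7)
The plan is to deduce the divisibility from Kato's one-sided main conjecture (Theorem \ref{thm:kato-divisibility}) combined with Mazur's control theorem that relates ground-level Selmer groups to the value at $T=0$ of the characteristic ideal on the cyclotomic tower. Since $L(E,1)\neq 0$, Corollary \ref{cor:kato-finiteness-mordell-weil} shows $E(\mathbb{Q})$ is finite, so the fundamental exact sequence collapses to $\mathrm{Sel}(\mathbb{Q}, E[p^\infty]) \simeq \sha(E/\mathbb{Q})[p^\infty]$ for every $p$. It therefore suffices to prove, for all but finitely many $p$, the inequality $v_p(\#\mathrm{Sel}(\mathbb{Q}, E[p^\infty])) \leq v_p(L(E,1)/\Omega^+_E)$. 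I would exclude the finite set $\Sigma_0$ of primes containing the primes of bad reduction, those dividing $\#E(\mathbb{Q})_{\mathrm{tors}} \cdot \mathrm{Tam}(E)$, and those where $\mathrm{Im}(\rho)$ fails to contain a conjugate of $\mathrm{SL}_2(\mathbb{Z}_p)$; the latter is finite by Serre's open image theorem, since $E$ has no CM.

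For $p \notin \Sigma_0$ of good \emph{ordinary} reduction, I would apply Theorem \ref{thm:kato-divisibility}: writing $X = \mathrm{Sel}(\mathbb{Q}_\infty, E[p^\infty])^\vee$ and $f$ for a generator of $\mathrm{char}_\Lambda(X)$, Kato's divisibility yields $v_p(f(0)) \leq v_p(L_p(E)(0))$. Mazur's control theorem, with the local error terms killed by the prime exclusions above, identifies $\#\mathrm{Sel}(\mathbb{Q}, E[p^\infty])$ with $\#X_\Gamma = |f(0)|_p^{-1}$ up to a $p$-adic unit, via the structure theory of torsion $\Lambda$-modules. The interpolation property of $L_p(E)$ at the trivial character then reads $L_p(E)(0) \doteq (1-\alpha_p^{-1})^2 \cdot L(E,1)/\Omega^+_E$, where $\alpha_p$ is the unit root of $X^2 - a_p X + p$; outside a further finite set of ``anomalous'' primes (those with $a_p \equiv 1 \pmod p$), this Euler factor is a $p$-adic unit. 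Chaining the inequalities delivers the desired $p$-part bound for every ordinary $p$ outside a finite set.

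The main obstacle is that the set of supersingular primes for a non-CM elliptic curve over $\mathbb{Q}$ is \emph{infinite} by Elkies' theorem, so we cannot simply discard them. At such primes $L_p(E) \notin \Lambda$ and Mazur's control theorem fails, so the Iwasawa-theoretic route above is unavailable as stated. To complete the proof in Kato's spirit, one must instead bound $\#\mathrm{Sel}(\mathbb{Q}, E[p^\infty])$ directly from the class $z_\mathbb{Q}$, by running a Kolyvagin-system argument at the ground level on the full Euler system $\{z_{\mathbb{Q}(\zeta_m)}\}$ containing $z_\mathbb{Q}$, and feeding in the reciprocity law $\exp^*(\mathrm{loc}_p(z_\mathbb{Q})) = L^{(p)}(E,1)/\Omega^+_E \cdot \omega_E$ of Theorem \ref{thm:kato-finiteness-mordell-weil} to convert the resulting bound into a statement about $L(E,1)/\Omega^+_E$. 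This ground-level argument is insensitive to ordinary-versus-supersingular dichotomy, and it also sidesteps the anomalous-prime issue that was an artifact of passing through the interpolation of $L_p(E)$.
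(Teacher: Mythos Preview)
Your overall toolkit matches the paper's, but you have inverted the dichotomy and this creates a genuine gap at anomalous primes. The paper does \emph{not} split into ordinary versus supersingular. It splits according to whether $p \mid \#E(\mathbb{F}_p)$: when $p \nmid \#E(\mathbb{F}_p)$ (which automatically includes \emph{all} supersingular primes, since then $\#E(\mathbb{F}_p)\equiv 1 \pmod p$, as well as all non-anomalous ordinary primes), the standard ground-level Euler system argument over $\mathbb{Q}$ already gives the bound; only when $p \mid \#E(\mathbb{F}_p)$ (anomalous ordinary primes) does one pass to $\mathbb{Q}_\infty$ and invoke Theorem~\ref{thm:kato-divisibility} together with Mazur's control theorem and the absence of non-trivial finite $\Lambda$-submodules. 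So the Iwasawa-theoretic step is the \emph{exception}, not the rule, and it is applied precisely where ordinariness guarantees that the control theorem and $p$-adic $L$-function are available.

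Your proposal has two problems at anomalous primes. First, you write ``outside a further finite set of anomalous primes'': it is \emph{not known} that $\{p : a_p = 1\}$ is finite for a non-CM curve, so you cannot discard them. Second, your claim that the ground-level Kolyvagin-system argument ``sidesteps the anomalous-prime issue'' is exactly what the paper denies: the obstruction at anomalous primes is not merely the interpolation factor $(1-\alpha_p^{-1})^2$, but the local index computation at $p$ in the Euler system bound, which acquires a factor of $\#\widetilde{E}(\mathbb{F}_p)$ (cf.\ \cite[Thm.~3.5.11]{rubin-book}). That is why the paper routes anomalous primes through $\mathbb{Q}_\infty$. Once you reorganize the case split as $p\nmid \#E(\mathbb{F}_p)$ versus $p\mid \#E(\mathbb{F}_p)$, your ingredients (Serre's open image, the reciprocity law for $z_{\mathbb{Q}}$, Theorem~\ref{thm:kato-divisibility}, control theorem) assemble correctly.
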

We explain how a proof of Theorem \ref{thm:kato-almost-divisibility} goes following \cite[Thm. 3.5.11 and Cor. 3.5.19]{rubin-book}.
By Serre's open image theorem \cite{serre-open-image}, we have the implication
\begin{quote}
$E$ has no CM $\Rightarrow$ $\rho_{E,p} : G_{\mathbb{Q}} \to \mathrm{Aut}_{\mathbb{Z}_p}(\mathrm{Ta}_pE)$ is surjective for $p \gg 0$.
\end{quote}
In addition, we remove the following finitely many primes:
\begin{enumerate}
\item all bad reduction primes,
\item all primes dividing Euler factors at $s=1$ at bad reduction primes,
\item 2,
\item all the primes dividing some constant $c_E$ (independent of $p$)
satisfying
$$``\mathrm{exp}^* \circ \mathrm{loc}_p z_{\mathbb{Q}} = c_E \cdot \dfrac{L^{(p)}(E,1)}{\Omega^+_E}",$$
and
\item all the primes dividing  some constant $M_E$ such that
if $E$ has good ordinary reduction at $p$ and $p$ divides $M_E$ then
$\mathrm{Sel}(\mathbb{Q}_{\infty}, E[p^\infty])^\vee$ has non-trivial finite $\Lambda$-submodule.
See \cite[Cor. 3.5.19]{rubin-book} for details.
For example, if $p$ does not divide $\prod_{q \vert N} \#E(\mathbb{Q}_q)_{\mathrm{tors}}$, then 
$\mathrm{Sel}(\mathbb{Q}_{\infty}, E[p^\infty])^\vee$ has no non-trivial finite $\Lambda$-submodule \cite[Cor. 3.5.18]{rubin-book}.
\end{enumerate}
When $p$ does not divide $\# E(\mathbb{F}_p)$, the standard Euler system argument over $\mathbb{Q}$ is enough.
When $p$ divides $\# E(\mathbb{F}_p)$, we need to work over $\mathbb{Q}_\infty$.
By using Theorem \ref{thm:kato-divisibility} and the non-existence of non-trivial finite $\Lambda$-submodules,
Kato's divisibility (\ref{eqn:kato-divisibility})
implies that
$$\# \left\vert \mathrm{Sel}(\mathbb{Q}_{\infty}, E[p^\infty])^{\mathrm{Gal}(\mathbb{Q}_{\infty}/\mathbb{Q})} \right\vert \textrm{ divides } \textbf{1}(L_p(E)).$$
The conclusion follows from the interpolation formula of the $p$-adic $L$-function (\ref{eqn:interpolation-trivial-character}) and Mazur's control theorem (Proposition \ref{prop:control-thoerem}).
\subsubsection{}
We have the following application.
\begin{cor} Let $E$ be an elliptic curve over $\mathbb{Q}$ without complex multiplication.
If $L(E, 1) \neq 0$, then $E(\mathbb{Q})$ and $\sha(E/\mathbb{Q})$ are finite.
\end{cor}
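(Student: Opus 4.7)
The plan is direct: Theorem \ref{thm:kato-finiteness-mordell-weil-sha}(1), applied to any single prime $p$, already delivers the finiteness of $E(\mathbb{Q})$, so the substantive content of this corollary is the finiteness of $\sha(E/\mathbb{Q})$ globally, not merely of each of its $p$-primary parts.

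For that, I would first observe that $\sha(E/\mathbb{Q})$ is torsion, so it decomposes as
$$\sha(E/\mathbb{Q}) = \bigoplus_{p} \sha(E/\mathbb{Q})[p^\infty] .$$
Each summand $\sha(E/\mathbb{Q})[p^\infty]$ is finite by Theorem \ref{thm:kato-finiteness-mordell-weil-sha}(1), so it suffices to show that $\sha(E/\mathbb{Q})[p^\infty]$ is trivial for all but finitely many primes $p$. Here I would invoke Theorem \ref{thm:kato-almost-divisibility}. The ratio $L(E,1)/\Omega^+_E$ is a (non-zero) \emph{rational} number; this is a classical consequence of the modularity theorem (Theorem \ref{thm:modularity}) combined with the theory of modular symbols of Shimura--Manin, and is logically independent of the Euler system machinery. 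Its numerator therefore has only finitely many prime divisors. For every prime $p$ outside the union of the finite exceptional set arising in Theorem \ref{thm:kato-almost-divisibility} and the finite set of primes dividing that numerator, the divisibility $\#\sha(E/\mathbb{Q})[p^\infty] \mid L(E,1)/\Omega^+_E$ forces $\#\sha(E/\mathbb{Q})[p^\infty] = 1$, i.e.\ the $p$-primary component vanishes. Hence only finitely many summands in the above direct sum decomposition are non-trivial, and each is finite, so $\sha(E/\mathbb{Q})$ itself is finite.

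The main ``obstacle'' is conceptual rather than technical: all of the deep input — the construction of the zeta elements, the divisibility in the Iwasawa main conjecture (Theorem \ref{thm:kato-divisibility}), Mazur's control theorem, the interpolation property of $L_p(E)$, and Serre's open image theorem — has already been packaged into Theorems \ref{thm:kato-finiteness-mordell-weil-sha} and \ref{thm:kato-almost-divisibility}. What makes the final assembly go through is the separately established rationality of $L(E,1)/\Omega^+_E$, without which the divisibility in Theorem \ref{thm:kato-almost-divisibility} could not be upgraded from a $p$-adic statement to a uniform statement at cofinitely many primes.
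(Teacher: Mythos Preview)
Your argument is correct and follows essentially the same route as the paper: finiteness of $E(\mathbb{Q})$ and of each $\sha(E/\mathbb{Q})[p^\infty]$ from Theorem~\ref{thm:kato-finiteness-mordell-weil-sha}(1), then triviality of $\sha(E/\mathbb{Q})[p^\infty]$ for almost all $p$ from Theorem~\ref{thm:kato-almost-divisibility}. You have simply made explicit the torsion decomposition and the rationality of $L(E,1)/\Omega^+_E$ that the paper leaves implicit.
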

\begin{proof}
By Theorem \ref{thm:kato-finiteness-mordell-weil-sha}.(1), $E(\mathbb{Q})$ is finite and $\sha(E/\mathbb{Q})[p^\infty]$ is finite for every prime $p$.
By Theorem \ref{thm:kato-almost-divisibility}.(1), $\sha(E/\mathbb{Q})[p^\infty]$ is trivial for all but finitely many prime $p$.
\end{proof}
From Theorem \ref{thm:kato-finiteness-mordell-weil-sha}.(2) and Theorem \ref{thm:rohrlich-non-vanishing}, we also deduce the following corollary.
\begin{cor} \label{cor:finitely-generated-Q-infty}
Let $E$ be an elliptic curve over $\mathbb{Q}$ without complex multiplication.
Then $E(\mathbb{Q}_{\infty})$ is (still!) finitely generated.
\end{cor}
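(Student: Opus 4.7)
The plan is to bound the Mordell--Weil rank of $E(\mathbb{Q}_n)$ uniformly in $n$ using the Galois-isotypic decomposition, and to handle torsion as a separate classical input. The two key ingredients are Theorem \ref{thm:kato-finiteness-mordell-weil-sha}.(2), which kills $\chi$-components with non-vanishing twisted $L$-value, and Theorem \ref{thm:rohrlich-non-vanishing}, which ensures that such non-vanishing occurs for almost all finite order characters of $\mathrm{Gal}(\mathbb{Q}_\infty/\mathbb{Q})$.

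First I upgrade Rohrlich's statement to a pointwise form: since $L_p(E) \in \Lambda \simeq \mathbb{Z}_p\llbracket T \rrbracket$ is non-zero, the Weierstrass preparation theorem forces it to have only finitely many zeros among the points $\zeta - 1$ with $\zeta$ a $p$-power root of unity. Combined with the interpolation formula
$$\chi(L_p(E)) \doteq \tau(\overline{\chi}) \cdot \frac{L(E, \chi, 1)}{\Omega^{\chi(-1)}_E}$$
and the non-vanishing of Gauss sums and periods, this yields $L(E, \chi, 1) \neq 0$ for all but finitely many finite order characters $\chi$ of $\mathrm{Gal}(\mathbb{Q}_\infty/\mathbb{Q})$. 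Let $\chi_1, \dots, \chi_k$ be the exceptional characters and choose $N$ large enough that each $\chi_i$ factors through $\mathrm{Gal}(\mathbb{Q}_N/\mathbb{Q})$.

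Next, I run the isotypic argument. For any $m \geq N$, decompose
$$E(\mathbb{Q}_m) \otimes_{\mathbb{Z}} \mathbb{C} = \bigoplus_\chi \bigl(E(\mathbb{Q}_m) \otimes \mathbb{C}\bigr)^\chi,$$
where $\chi$ ranges over characters of $\mathrm{Gal}(\mathbb{Q}_m/\mathbb{Q})$. Any $\chi$ with $L(E, \chi, 1) \neq 0$ has $E(\mathbb{Q}_m)^\chi$ finite by Theorem \ref{thm:kato-finiteness-mordell-weil-sha}.(2), so its isotypic component vanishes. Each surviving $\chi$ is one of the $\chi_i$ and is therefore trivial on $\mathrm{Gal}(\mathbb{Q}_m/\mathbb{Q}_N)$, whence its isotypic component lies in
$$\bigl(E(\mathbb{Q}_m) \otimes \mathbb{C}\bigr)^{\mathrm{Gal}(\mathbb{Q}_m/\mathbb{Q}_N)} = E(\mathbb{Q}_N) \otimes \mathbb{C},$$
where the equality uses flatness of $\mathbb{C}$ and the standard fact $E(\mathbb{Q}_m)^{\mathrm{Gal}(\mathbb{Q}_m/\mathbb{Q}_N)} = E(\mathbb{Q}_N)$. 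Summing, the inclusion $E(\mathbb{Q}_N) \otimes \mathbb{C} \hookrightarrow E(\mathbb{Q}_m) \otimes \mathbb{C}$ is an equality for every $m \geq N$, so $\mathrm{rank}_{\mathbb{Z}} E(\mathbb{Q}_m) = \mathrm{rank}_{\mathbb{Z}} E(\mathbb{Q}_N)$ independently of $m$, and passing to the direct limit gives $\mathrm{rank}_{\mathbb{Z}} E(\mathbb{Q}_\infty) < \infty$.

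It remains to observe that $E(\mathbb{Q}_\infty)_{\mathrm{tors}}$ is finite. This is a classical input independent of Kato's machinery: the $p$-primary part is handled by Imai's theorem on $E(K_\infty)[p^\infty]$ for cyclotomic $\mathbb{Z}_p$-extensions, while for $\ell \neq p$ one uses injectivity of reduction at a good prime $q \neq p, \ell$ together with the control on residue field growth in $\mathbb{Q}_\infty/\mathbb{Q}$. Only finitely many primes $\ell$ can actually contribute, and combining with the rank bound above yields finite generation of $E(\mathbb{Q}_\infty)$. The main obstacle is the first step: leveraging Rohrlich's qualitative $L_p(E) \neq 0$ into the pointwise non-vanishing $L(E, \chi, 1) \neq 0$ for almost all $\chi$; the rank bound is then a clean semisimple decomposition, and torsion finiteness is standard.
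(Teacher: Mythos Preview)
Your argument is correct and is precisely the route the paper indicates: it simply says the corollary follows from Theorem~\ref{thm:kato-finiteness-mordell-weil-sha}.(2) and Theorem~\ref{thm:rohrlich-non-vanishing}, and you have supplied the details (isotypic decomposition for the rank, classical inputs for torsion).

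One small remark on your ``main obstacle'': the detour through Weierstrass preparation is unnecessary and slightly circular. Rohrlich's actual theorem in \cite{rohrlich-nonvanishing,rohrlich-nonvanishing-2} is the complex-analytic pointwise statement $L(E,\chi,1)\neq 0$ for all but finitely many $\chi$ of $p$-power conductor; the non-vanishing of $L_p(E)$ is a \emph{consequence} (in the ordinary setting where $L_p(E)\in\Lambda$ is defined), not the source. So you may invoke the pointwise non-vanishing directly, which also removes any implicit dependence on the ordinary hypothesis in that step.
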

We can understand Corollary \ref{cor:finitely-generated-Q-infty} as a behavior of the ``function $E$"; in other words, the assignment 
$$\mathbb{Q}_n \mapsto E(\mathbb{Q}_n)$$
does not grow too fast. This growth behavior shows the philosophy of Iwasawa theory very well.

\subsubsection{} \label{subsubsec:iwasawa-cohomology}
How does Kato's Euler system play the role in Theorem \ref{thm:kato-divisibility}?
Let $$z_{\mathbb{Q}_{\infty}} = \varprojlim_{n} z_{\mathbb{Q}_{n}} \in \mathrm{H}^1_{\mathrm{Iw}}(\mathbb{Q}, T) = \varprojlim_{n}
\mathrm{H}^1(\mathbb{Q}_n, T)$$
be Beilinson--Kato's zeta element over $\mathbb{Q}_{\infty}$
where the inverse limit is taken with respect to the corestriction, and the norm compatibility $\varprojlim_{n} z_{\mathbb{Q}_{n}}$ is a part of the Euler system relation.
The punchline is that we can realize the $p$-adic $L$-function $L_p(E)$ by using $z_{\mathbb{Q}_{\infty}}$.
More precisely, there exists a Coleman map
$$\mathrm{Col} : \mathrm{H}^1_{\mathrm{Iw}}(\mathbb{Q}_p, T) \to \Lambda$$
sending $\mathrm{loc}_p ( z_{\mathbb{Q}_{\infty}} )$ to $L_p(E)$.
If $E$ has good supersingular reduction, the target of the Coleman map becomes larger than $\Lambda$, but the signed Coleman maps work as in the ordinary case.
By using the Coleman map, we have the equivalence between
\begin{itemize}
\item Kato's main conjecture for elliptic curves (Conjecture \ref{conj:kato-main-conj-sel0}), and
\item Mazur--Greenberg's main conjecture for elliptic curves with good ordinary reduction (Conjecture \ref{conj:imc-mazur-greenberg}).
\end{itemize}
There is no assumption on the reduction type in Kato's formulation. We discuss this equivalence in the second lecture ($\S$\ref{subsec:global-duality}).

The notion of Coleman map is purely local and the generalizations of the Coleman map is an important part of $p$-adic Hodge theory.
See \cite{perrin-riou-local-iwasawa, nakamura-local-iwasawa} for example.

\subsection{Refining the Euler system argument}
In the Euler system argument \cite{rubin-book}, the \emph{derivative process} plays the crucial role to produce the \emph{ramified} classes over $\mathbb{Q}$ from the unramified cohomology classes over ramified abelian extensions of $\mathbb{Q}$. We can bound Selmer groups with these ramified classes via the global Poitou--Tate duality.
A more careful organization of the derivative process leads to Mazur--Rubin's theory of Kolyvagin systems \cite{mazur-rubin-book}, which has the following advantages:
\begin{enumerate}
\item the theory of Kolyvagin systems provides a mod $p$ numerical criterion to have equality, not divisibility. See \cite{kks}.
\item the theory of Kolyvagin systems provides the structure theorem of Selmer groups. See also \cite{kolyvagin-selmer}.
\end{enumerate}
In Mazur's diagram below
\[
\xymatrix{
{\substack{\textrm{$L$-values or} \\ \textrm{derivatives of $L$-values} } } \ar@{<-->}[d]_-{\textrm{DIFFICULT}} & & {\substack{\textrm{algebraic treasures} \\ \textrm{(Euler systems)} } } \ar[ll]_-{ {\substack{\textrm{explicit reciprocity law/} \\ \textrm{Gross--Zagier type formula} } } } \ar@{->>}[d]^-{ \textrm{derivative process $+ \epsilon$} }_-{(1)} \\
{\substack{\textrm{arithmetically interesting} \\ \textrm{modules (Selmer groups, class groups, $\cdots$)} } } & & \textrm{Kolyvagin systems} , \ar[ll]_-{ {\substack{\textrm{control by} \\ \textrm{global duality} } } }^-{(2)}
}
\]
the mod $p$ criterion (1) concerns the surjectivity of the map from Euler systems to Kolyvagin systems and the structure theorem (2) is a refined version of the global duality argument.
\subsection{Where do Euler systems come from?}
One big question is: 
\begin{ques}
Where do Euler systems come from?
\end{ques}
This seems the most non-trivial question in the theory of Euler systems.
There was a handful of Euler systems a while ago (e.g. \cite{bertolini-castella-darmon-dasgupta-prasanna-rotger}), but 
a general strategy to construct Euler systems has been developed thanks to the series of the important works of Loeffler--Zerbes and their collaborators. See \cite{loeffer-zerbes-icm} for the progress of Loeffler--Zerbes' program on the construction of Euler systems for automorphic Galois representations.
Also, M. Sangiovanni and C. Skinner have recently announced another general strategy to construct Euler systems.

\subsection{A very$^2$ rough picture} \label{subsec:rough-picture}
In order to describe 
$p$-adic zeta element ${}_{c,d} z^{(p)}_{m}(f, r, r' , \xi, S)$
and zeta modular form ${}_{c,d} z_{m}(f, r, r' , \xi, S)$, we draw the following very rough picture. We hope it provides a guideline to read \cite{kato-euler-systems}:
\[
\xymatrix{
  & \textrm{2 Siegel units } {}_c g_{\alpha, \beta}, {}_d g_{\gamma, \delta} \in \mathcal{O}\left(  {\substack{\textrm{an open modular curve} \\ \textrm{with certain level structure} } } \right)^\times \ar[d]^-{ {\substack{\textrm{pairing via Steinberg symbols} \\ \textrm{$+$ certain twist} } } }\\
  &  K_2(Y_1(N)_{\mathbb{Q}(\zeta_m)}) \ar[d]^-{ {\substack{\textrm{Chern character} \\ \textrm{\cite[(8.4.3)]{kato-euler-systems}} } } } \\
{\substack{ {}_c \theta_E, {}_d \theta_E  \\ \textrm{certain functions on} \\ \textrm{the universal elliptic curve} } } \ar@/^1pc/[uur] \ar@/_3pc/[ddddr]_-{\textrm{$\mathrm{dlog}$ involved}}
 &  \mathrm{H}^2(Y_1(N)_{\mathbb{Q}(\zeta_m)}, \mathbb{Z}_p(2)) \ar[d]^-{ {\substack{\textrm{Hochschild--Serre} \\ \textrm{spectral sequence} \\ \textrm{(HSSS)} } } } \\
  &  \mathrm{H}^1(\mathbb{Q}(\zeta_m), \mathrm{H}^1_{\et} (Y_1(N)_{\overline{\mathbb{Q}}}, \mathbb{Z}_p(2))) \ar[d] \\
  &  \mathrm{H}^1(\mathbb{Q}(\zeta_m), \mathrm{H}^1_{\et} (Y_1(N)_{\overline{\mathbb{Q}}}, \mathbb{Z}_p(1))(1)) \ar@{-->}[d]^-{ { \textrm{``$\otimes (\zeta_{p^n})^{\otimes -1}$"} } } \\
  &  \mathrm{H}^1(\mathbb{Q}(\zeta_m), \mathrm{H}^1_{\et} (Y_1(N)_{\overline{\mathbb{Q}}}, \mathbb{Z}_p(1))) \ar[d]^-{ {\substack{\textrm{dual exponential} \\ \textrm{\cite[Thm. 9.6]{kato-euler-systems}} } } } \\
  &  \textrm{zeta modular forms} \in \textrm{the space of modular forms}
}
\]
We will elaborate this picture more in the last lecture and here are quick remarks:
\begin{itemize}
\item $\mathcal{O}\left(  {\substack{\textrm{an open modular curve} \\ \textrm{with certain level structure} } } \right)^\times$ means the ring of invertible functions on an open modular curve with certain level structure as described in $\S$\ref{subsubsec:more-modular-curves}. 
\item The relation between $({}_c\theta_E, {}_d\theta_E)$ and $(m, N)$ can be given explicitly and it is more or less hidden in the word ``$+$ certain twist". See $\S$\ref{subsec:changing-modular-curves} for the hint.
\item In order to take the Soul\'{e} twist ``$\otimes (\zeta_{p^n})^{\otimes -1}$" properly, we need to work over $\mathbb{Q}(\zeta_{mp^\infty})$ and with torsion coefficients first and go down to $\mathbb{Q}(\zeta_{m})$. See \cite[(8.4.3)]{kato-euler-systems}.
\end{itemize} 
The zeta modular form can be viewed as a product of two weight 1 Eisenstein series.
The projection to an elliptic curve $E$ yields the $L$-value formula for $E$ via the Rankin--Selberg method.
When we project the zeta modular form to $E$, we need to invert $p$, so the integrality issue becomes subtle in general \cite[$\S$6.3]{kato-euler-systems}. 
When we use the Chern character map, the parameters $(r, r')$ in ${}_{c,d} z_{m}(f, r, r' , \xi, S)$ are determined. (Since we deal with elliptic curves only here, $f$ is a weight 2 modular form.)

In this picture, ${}_{c,d} z^{(p)}_{m}(f, r, r' , \xi, S)$ lies in the image of $p$-adic zeta element 
under the specialization map to the eigenform $f$
$$\mathrm{H}^1(\mathbb{Q}(\zeta_m), \mathrm{H}^1_{\et} (Y_1(N)_{\overline{\mathbb{Q}}}, \mathbb{Z}_p(1))) \to \mathrm{H}^1(\mathbb{Q}(\zeta_m), V_f(1)) = \mathrm{H}^1(\mathbb{Q}(\zeta_m), V_p(E)) ,$$
and ${}_{c,d} z_{m}(f, r, r' , \xi, S)$ lies in the image of the zeta modular form
under the similar specialization map
$$S_2(\Gamma_0(N)) \otimes \mathbb{Q}(\zeta_m) \to S(f) \otimes \mathbb{Q}(\zeta_m)  \otimes \mathbb{Q}_p$$
where $S(f)$ is a one-dimensional quotient space of $S_2(\Gamma_0(N))$ generated by the Hecke eigensystem of $f$.
Although the target spaces are rational, the actual images of the above elements are integral.

\section{Lecture 2: Cyclotomic Iwasawa theory for elliptic curves}
Let $\mathbb{Q}_\infty$ be the cyclotomic $\mathbb{Z}_p$-extension of $\mathbb{Q}$.
The following picture shows how Kato's Euler systems,  Kato's Kolyvagin systems,  and the Iwasawa main conjecture are related. 
\[
\xymatrix{
\scriptsize{ \textrm{$p$-adic $L$-functions} } \ar@{<-->}[rrr]^-{\textrm{Iwasawa main conjecture (with $p$-adic $L$-functions)}} & & & 
{\substack{ \textrm{Selmer groups} \\ \textrm{over } \mathbb{Q}_\infty}  }
 \\
{\substack{ \textrm{modular symbols/$L$-values} \\ \textrm{(``Betti")} }  } \ar[u]^-{ {\substack{ \textrm{Stickelberger type} \\ \textrm{construction} }  } } &
 {\substack{ \textrm{Kato's Euler systems} \\ \textrm{(``\'{e}tale")} }  } \ar[l]^-{\mathrm{exp}^*} \ar[r]_-{ {\substack{ \textrm{derivative} \\ \textrm{construction} }  } } \ar[lu]_-{\textrm{Coleman map}} 
& \scriptsize{  \textrm{Kato's Kolyvagin systems} } \ar@{~>}[r]_-{ {\substack{ \textrm{control via} \\ \textrm{global duality} }  } } & {\substack{ \textrm{Selmer groups} \\ \textrm{over } \mathbb{Q} }  } \ar[u]^-{\substack{ \varinjlim \\ \textrm{over } \mathbb{Q}_n}  }
}
\]

The goal of the second lecture is to explain some part of the above picture in detail.
One may ask that the ``de Rham" aspect is missing in this picture. 
This nature is more or less hidden in the dual exponential map since the Rankin--Selberg method is used in the proof of Kato's zeta value formula \cite[$\S$7]{kato-euler-systems}. See also \S\ref{subsec:coleman-maps}.
\subsection{$p$-adic $L$-functions and modular symbols: Betti construction} \label{subsec:betti-construction}
We review the modular symbol construction of $p$-adic $L$-functions following \cite{mtt,pollack-oms,bellaiche-book}.
\subsubsection{$L$-values}
How to extract $L$-values from (modular) elliptic curves?

Let $f_E(z) = \sum_{n \geq 1} a_n \cdot q^n$ where $q = e^{2 \pi i z}$.
We can express the $L$-values in terms of integrals \emph{very simply  if we ignore the convergence issue} \cite{pollack-oms}.
\begin{align*}
2 \pi i \cdot \int^{0}_{i\infty} f_E(z) dz
& =  2 \pi i \cdot \int^{0}_{i\infty} \sum_{n \geq 1} a_n \cdot e^{2 \pi i n z} dz \\
& = 2 \pi i \cdot \sum_{n \geq 1} a_n \cdot \int^{0}_{i\infty}  e^{2 \pi i n z} dz \\
& = \sum_{n \geq 1} a_n  \cdot n^{-1} \cdot \left( \left.  e^{2 \pi i n z} \right\vert^{0}_{i \infty} \right)\\
& = \sum_{n \geq 1} a_n  \cdot n^{-1} \\
& = L(E,1) .
\end{align*}
\begin{exer}
Where did we ignore the convergence issue?
\end{exer}
\subsubsection{Twists}
We closely follow \cite[$\S$8]{mtt} (and \cite[$\S$5.4.1]{bellaiche-book}).

Let $\chi : (\mathbb{Z}/d\mathbb{Z})^\times \to \mathbb{C}^\times$ be a Dirichlet character of conductor $d$.
Then how can we express the twisted $L$-value
$L(E, \chi, 1)$ in terms of integrals?
Consider
$$\tau(n , \chi) = \sum_{a \in (\mathbb{Z}/d\mathbb{Z})^\times} \chi(a) \cdot e^{2 \pi i n \cdot (a/d)}$$
where the sum should be viewed as the sum over $a = 1, \cdots, d$ with $(a,d) = 1$.
Then
$\tau(\chi) = \tau(1, \chi)$ is the Gauss sum
and $\tau(n, \chi) = \overline{\chi(n)} \cdot \tau(\chi)$.
Consider the twisted modular form
\begin{align*}
f_{\overline{\chi}}(z) & = \sum_{n \geq 1} \overline{\chi}(n) \cdot a_n \cdot e^{2 \pi i n z} \\
& =  \dfrac{1}{\tau(\chi)} \cdot \sum_{n \geq 1} \tau(n, \chi) \cdot a_n \cdot e^{2 \pi i n z} \\
& =  \dfrac{1}{\tau(\chi)} \cdot \sum_{n \geq 1} \sum_{a \in (\mathbb{Z}/n\mathbb{Z})^\times} \chi(a) \cdot a_n \cdot e^{2 \pi i n (z + a/d)} .
\end{align*}
By Birch's lemma \cite[(8.3)]{mtt}, we have
\begin{align*}
L(E, \overline{\chi}, 1) & = \dfrac{2 \pi i}{\tau(\chi)} \cdot \sum_{a \in (\mathbb{Z}/n\mathbb{Z})^\times} \chi(a) \cdot \int^{a/d}_{i \infty} f(z) dz \\
& = \dfrac{\tau(\overline{\chi})}{d} \cdot \sum_{a \in (\mathbb{Z}/n\mathbb{Z})^\times} \chi(a) \cdot 2 \pi i \int^{-a/d}_{i \infty} f(z) dz
\end{align*}
These computations lead to the following concept.
Let $X_0(N)(\mathbb{C}) = \Gamma_0(N) \backslash ( \mathfrak{h} \cup \mathbb{P}^1(\mathbb{Q}) ) $ be the complex points of compactified modular curve $X_0(N)$.
For a pair $\alpha , \beta \in  \mathbb{P}^1(\mathbb{Q}),$ we define
$$\left\lbrace \alpha , \beta \right\rbrace_f := 2\pi i \cdot \int^{\beta}_{\alpha} f(z) dz$$
and it is called a (non-normalized) modular symbol.
\subsubsection{Modular elements of Mazur--Tate}
Fix an isomorphism
$$(\mathbb{Z}/d\mathbb{Z})^\times \simeq \mathrm{Gal}(\mathbb{Q}(\zeta_d)/\mathbb{Q})$$
by sending $a$ to $(\sigma_a : \zeta_d \mapsto \zeta^a_d)$.
Let $\chi : (\mathbb{Z}/d\mathbb{Z})^\times \to \mathbb{C}^\times$ be a Dirichlet character and we extend it linearly
$$\chi :\mathbb{C}[\mathrm{Gal}(\mathbb{Q}(\zeta_d)/\mathbb{Q})] \to \mathbb{C}$$
by sending $\sigma \in \mathrm{Gal}(\mathbb{Q}(\zeta_d)/\mathbb{Q})$ to $\chi(\sigma)$.
Under this map, we have
$$\sum_{a \in (\mathbb{Z}/d\mathbb{Z})^\times} \left\lbrace  i\infty , a/d \right\rbrace_f \cdot \sigma_a \mapsto \tau(\chi) \cdot L(E, \overline{\chi}, 1) .$$
In other words, (non-normalized) Mazur--Tate elements know twisted $L$-values.

\subsubsection{Some hints on $p$-adic $L$-functions}
Let $\chi : (\mathbb{Z}/p^n\mathbb{Z})^\times \to \mathbb{C}^\times$ be a Dirichlet character of modulus $p^n$.
Then  we have
$$L(E, \overline{\chi}, 1) = \dfrac{2 \pi i}{\tau(\chi)} \cdot \sum_{a \in (\mathbb{Z}/p^n\mathbb{Z})^\times} \chi(a) \cdot \int^{a/p^n}_{i \infty}  f(z) dz$$
and the summation looks like a Riemann sum since
$$\mathbb{Z}^\times_p = \coprod_{a \in (\mathbb{Z}/p^n\mathbb{Z})^\times} (a+p^n \mathbb{Z}_p) ,$$
if $\chi$ is a locally constant function on $\mathbb{Z}^\times_p$, then
the twisted $L$-value can be understood as a $p$-adic integration.
\begin{rem}
Where does the distribution relation come from? Hecke operators!
\end{rem}
\subsubsection{Normalizations}
Write 
$\lbrace r \rbrace_f = \lbrace i \infty,  r \rbrace_f$ for $r \in \mathbb{P}^1(\mathbb{Q})$
and then we symmetrize integrals:
$$\lbrace r \rbrace^{\pm}_f = 2 \pi i \cdot \left(  \int^r_{i \infty} f(z)dz \pm \int^{-r}_{i \infty} f(z)dz \right) $$
We recall the algebraicity result of Manin and Shimura.
\begin{thm}[Manin, Shimura]
There exist a pair of complex periods $\Omega^{\pm}_f \in \mathbb{C}^\times$ such that
$$\lbrace r \rbrace^{\pm}_f \in \mathbb{Q}_f \cdot \Omega^{\pm}_f$$
where $\mathbb{Q}_f = \mathbb{Q}(a_n(f); n)$ is the number field generated by Fourier coefficients of $f$.
\end{thm}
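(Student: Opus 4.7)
The plan is to realize the symmetrized modular symbols cohomologically and then use an Eichler--Shimura type rational structure argument to extract the periods $\Omega^\pm_f$. First, I would interpret the assignment $r \mapsto \{r\}_f = 2\pi i \int^r_{i\infty} f(z) dz$ as an integration pairing. Any path from $i\infty$ to $r$ in the extended upper half-plane descends to a relative $1$-cycle on $(X_0(N)(\mathbb{C}), \text{cusps})$, giving a class in $\mathrm{H}_1(X_0(N)(\mathbb{C}), \text{cusps}; \mathbb{Z})$. Pairing with the holomorphic form $2\pi i f(z)dz$ (viewed through the Eichler--Shimura embedding $S_2(\Gamma_0(N)) \hookrightarrow \mathrm{H}^1(X_0(N)(\mathbb{C}), \mathbb{C})$) recovers $\{r\}_f$. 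Since $f$ is cuspidal, after quotienting by the cusp part this actually lands in the parabolic cohomology $\mathrm{H}^1_{\mathrm{par}}$, where the Eichler--Shimura isomorphism gives a canonical decomposition compatible with the Hecke action.

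Next, I would isolate the $f$-isotypic part. By multiplicity one for newforms of level $\Gamma_0(N)$, the $f$-component $V_f := \mathrm{H}^1_{\mathrm{par}}(X_0(N)(\mathbb{C}), \mathbb{Q}_f)[\mathfrak{m}_f]$ (where $\mathfrak{m}_f$ is the kernel of the Hecke eigensystem $T_n \mapsto a_n$) is a $2$-dimensional $\mathbb{Q}_f$-vector space. The involution induced by complex conjugation $c: z \mapsto -\bar z$ normalizes $\Gamma_0(N)$ and commutes with the Hecke action, so it decomposes $V_f = V^+_f \oplus V^-_f$ into two $1$-dimensional $\mathbb{Q}_f$-lines. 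Fix $\mathbb{Q}_f$-rational generators $\eta^\pm_f \in V^\pm_f$. Inside $V_f \otimes_{\mathbb{Q}_f} \mathbb{C}$, the class $[2\pi i f(z) dz]$ decomposes as a sum of its $\pm$-parts, and I define
\[
\Omega^\pm_f \in \mathbb{C}^\times \qquad \text{by} \qquad [2\pi i f(z) dz]^\pm = \Omega^\pm_f \cdot \eta^\pm_f .
\]

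Finally, the symmetrization $\{r\}^\pm_f$ is by construction the evaluation of $[2\pi i f(z) dz]^\pm$ against the $\pm$-symmetrization of the relative $1$-cycle attached to $r$, which is a $\mathbb{Z}$-rational, hence $\mathbb{Q}_f$-rational, class in the homological dual of $V^\pm_f$. Therefore $\{r\}^\pm_f / \Omega^\pm_f$ is the pairing of two $\mathbb{Q}_f$-rational objects and lies in $\mathbb{Q}_f$, which is the desired conclusion. The main obstacle is establishing that each $V^\pm_f$ is genuinely $1$-dimensional over $\mathbb{Q}_f$ (so that $\Omega^\pm_f$ is well-defined up to $\mathbb{Q}_f^\times$ and nonzero): the two-dimensionality of $V_f$ and its $\mathbb{Q}_f$-rationality require multiplicity one together with strong multiplicity one for newforms on $\Gamma_0(N)$, while the non-vanishing of both eigenspaces requires checking that $c$ interchanges the holomorphic and antiholomorphic lines in $V_f \otimes \mathbb{C}$, so that neither eigenspace collapses.
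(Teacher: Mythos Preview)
The paper does not actually prove this theorem: it is stated as a classical result of Manin and Shimura and then used immediately to normalize the modular symbols $[r]^\pm_f$. So there is no ``paper's own proof'' to compare against.

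That said, your outline is the standard cohomological proof and is essentially correct. The key steps --- realizing $\{r\}_f$ as a pairing between a relative $1$-cycle and the class of $2\pi i f(z)\,dz$ via Eichler--Shimura, cutting out the $2$-dimensional $\mathbb{Q}_f$-rational piece $V_f$ by multiplicity one, splitting it into $1$-dimensional $\pm$-eigenspaces under the involution induced by $z \mapsto -\bar z$, and defining $\Omega^\pm_f$ as the comparison scalars between the holomorphic class and chosen $\mathbb{Q}_f$-generators $\eta^\pm_f$ --- are all sound, and you have correctly flagged the two genuine verifications needed (that $\dim_{\mathbb{Q}_f} V_f = 2$ and that neither eigenspace vanishes). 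One small caveat: you invoke multiplicity one for \emph{newforms}, which is fine in the paper's context since $f = f_E$ is a newform, but the theorem as stated does not make this explicit; for an oldform one would instead work in the new quotient or appeal to the action of the full Hecke algebra including $U_p$ operators.
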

Write
$$[r]^{\pm}_f = \lbrace r \rbrace^{\pm}_f / \Omega^{\pm}_f \in \mathbb{Q}_f \subseteq \overline{\mathbb{Q}}$$
and 
$$[r]_f = [r]^{+}_f  + [r]^{-}_f .$$

\subsubsection{Hecke action and distribution relations}
For $\alpha , \beta \in \mathbb{P}^1(\mathbb{Q})$, 
consider the homology class
$\lbrace \alpha , \beta \rbrace \in \mathrm{H}_1(X_0(N), \mathbb{Q})$.
Here, having the $\mathbb{Q}$-coefficients in the homology is due to Manin \cite{manin-parabolic}.
For a prime $\ell$ not dividing $N$, we have
$$T_\ell \lbrace \alpha , \beta \rbrace = \lbrace \ell \alpha , \ell \beta \rbrace +
\sum^{\ell-1}_{r =0}\lbrace \dfrac{\alpha +r}{\ell}  , \dfrac{\beta +r}{\ell} \rbrace .$$
When $\ell = p$, we want to remove $\lbrace p \alpha , p \beta \rbrace$ in the above sum in order to have the correct Riemann sum relation.
This is why we need the $p$-stabilization process.

Let $$\theta_{\mathbb{Q}(\zeta_M)} = \sum_{a \in (\mathbb{Z}/M\mathbb{Z})^\times} [a/M]_f \cdot \sigma_a \in \mathbb{Q}[\mathrm{Gal}( \mathbb{Q}(\zeta_M)/\mathbb{Q} )]$$
 be the modular element, which is the Stickelberger element for elliptic curves.
Varying $M$, we have two natural maps among them.
Let
$$ \pi : \mathbb{Q}[\mathrm{Gal}( \mathbb{Q}(\zeta_{M\ell})/\mathbb{Q} ) ] \to \mathbb{Q}[\mathrm{Gal}( \mathbb{Q}(\zeta_{M})/\mathbb{Q} ) ]$$
be the natural projection
and 
$$\nu  : \mathbb{Q}[\mathrm{Gal}( \mathbb{Q}(\zeta_{M})/\mathbb{Q} ) ] \to \mathbb{Q}[\mathrm{Gal}( \mathbb{Q}(\zeta_{M\ell})/\mathbb{Q} ) ]$$ be the norm map sending $\sigma \in \mathbb{Q}(\zeta_{M})/\mathbb{Q} )$ to the sum of liftings $\sum_{\pi(\tau) = \sigma} \tau$.
\begin{prop} \label{prop:compatiblity-modular-elements}
$$\pi \left( \theta_{\mathbb{Q}(\zeta_M)} \right)
=
\left\lbrace
\begin{array}{ll}
 a_\ell \cdot \theta_{\mathbb{Q}(\zeta_M)} - \nu \left(  \theta_{\mathbb{Q}(\zeta_{M/\ell})} \right) & \textrm{ if } \ell \vert M , \\
 (a_\ell - \sigma_\ell - \sigma^{-1}_{\ell}) \cdot \theta_{\mathbb{Q}(\zeta_M)}  & \textrm{ if } \ell \not\vert M .
\end{array} \right.$$
\end{prop}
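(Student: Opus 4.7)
The strategy is to compute $\pi(\theta_{\mathbb{Q}(\zeta_{M\ell})})$ by regrouping its defining sum according to the fibers of the projection $(\mathbb{Z}/M\ell\mathbb{Z})^\times \twoheadrightarrow (\mathbb{Z}/M\mathbb{Z})^\times$ and then apply the Hecke eigenvalue identity
\[
a_\ell\,[b/M]_f \;=\; [\ell b/M]_f \;+\; \sum_{s=0}^{\ell-1} [(b+sM)/(M\ell)]_f ,
\]
which follows from $T_\ell f = a_\ell f$ for $\ell\nmid N$ together with the formula for $T_\ell$ on modular symbols recalled just above the proposition, applied to $\{i\infty, b/M\}$. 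One first checks that this identity descends to the normalized symbol $[r]_f$, which is periodic modulo $1$ thanks to $f(z+1)=f(z)$; this also makes $[a/M]_f$ unambiguous for $a\in\mathbb{Z}/M\mathbb{Z}$.

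Multiplying the displayed identity by $\sigma_b$ and summing over $b\in(\mathbb{Z}/M\mathbb{Z})^\times$, the left-hand side becomes $a_\ell\cdot\theta_{\mathbb{Q}(\zeta_M)}$. The inner sum $\sum_s [(b+sM)/(M\ell)]_f\,\sigma_b$ then has to be split depending on which of the $\ell$ residues $b+sM\pmod{M\ell}$ are coprime to $\ell$. If $\ell\nmid M$, exactly one value $s=s_0$ makes $b+s_0M$ divisible by $\ell$; writing $(b+s_0M)/(M\ell)=b''/M$ gives $b''\equiv \ell^{-1}b\pmod M$, so summing this piece against $\sigma_b$ (after the substitution $b \leftrightarrow \ell b''$) yields $\sigma_\ell\cdot\theta_{\mathbb{Q}(\zeta_M)}$. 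The remaining $\ell-1$ residues run exactly over the $a\in(\mathbb{Z}/M\ell\mathbb{Z})^\times$ with $a\equiv b\pmod M$, and their contribution is $\pi(\theta_{\mathbb{Q}(\zeta_{M\ell})})$ by definition. Finally $\sum_b [\ell b/M]_f\,\sigma_b = \sigma_\ell^{-1}\cdot\theta_{\mathbb{Q}(\zeta_M)}$ after the substitution $b'=\ell b$. Rearranging gives the second case.

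If $\ell\mid M$, every lift $b+sM$ is automatically coprime to $\ell$ (hence to $M\ell$), so the entire inner sum collapses to $\pi(\theta_{\mathbb{Q}(\zeta_{M\ell})})$. Meanwhile $[\ell b/M]_f = [b/(M/\ell)]_f$, and regrouping $\sum_b [b/(M/\ell)]_f\,\sigma_b$ by the image $c = b \bmod (M/\ell)$ in $(\mathbb{Z}/(M/\ell)\mathbb{Z})^\times$ produces precisely $\nu(\theta_{\mathbb{Q}(\zeta_{M/\ell})})$ by the definition of the norm map. This yields the first case.

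The main obstacle is purely bookkeeping: one has to confirm that the ``bad" residue $s_0$ in the $\ell\nmid M$ case produces $\sigma_\ell$ (via $b''\equiv\ell^{-1}b$), while independently the $[\ell b/M]_f$ term produces $\sigma_\ell^{-1}$ (via $b'=\ell b$); conflating the two reindexings is the most likely source of a sign error. A minor preliminary check is that the Hecke relation respects the $\pm$-symmetrization $[r]_f = [r]^+_f + [r]^-_f$; this follows from the change of index $s\mapsto -s \bmod \ell$ in the $\int^{-r}_{i\infty}$ half of each symmetrized symbol.
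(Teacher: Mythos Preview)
Your proof is correct and follows precisely the approach the paper indicates: the paper's own ``proof'' reads in its entirety ``We left it to the reader as an exercise. (Hint: use Hecke action.)'', and you have carried out that exercise in full, including the careful bookkeeping that distinguishes the $\sigma_\ell$ and $\sigma_\ell^{-1}$ contributions in the $\ell\nmid M$ case and the regrouping by fibers that produces $\nu(\theta_{\mathbb{Q}(\zeta_{M/\ell})})$ in the $\ell\mid M$ case. Note also that you have correctly read $\pi(\theta_{\mathbb{Q}(\zeta_{M\ell})})$ for the left-hand side, fixing an apparent typo in the statement.
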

\begin{proof}
We left it to the reader as an exercise. (Hint: use Hecke action.)
\end{proof}
In favorable situations, the modular elements are integral with N\'{e}ron periods.
\begin{rem} \label{rem:integrality}
If $E[p]$ is irreducible and the Manin constant is prime to $p$, then
$$\theta_{\mathbb{Q}(\zeta_{p^n})}
\in \mathbb{Z}_p[\mathrm{Gal}( \mathbb{Q}(\zeta_{p^n})/\mathbb{Q} ) ] $$
for every $n \geq 1$. 
See \cite[pp. 200--201]{kurihara-invent} for details.
\end{rem}
\subsubsection{$p$-adic $L$-functions}
We modify the three term relation (the first one in Proposition \ref{prop:compatiblity-modular-elements} with $\ell = p$) of modular elements
in order to have the norm compatible sequence. This is where the $p$-stabilization plays the role.
Take $\alpha$ a root of $X^2 - a_p X + p =0$
and put
$$\theta^{\alpha}_{\mathbb{Q}(\zeta_{p^n})} = \dfrac{1}{\alpha^n} \cdot \left( \theta_{\mathbb{Q}(\zeta_{p^n})} 
- \dfrac{1}{\alpha} \cdot \theta_{\mathbb{Q}(\zeta_{p^{n-1}})} \right) .$$
Then $\left\lbrace 
\theta^{\alpha}_{\mathbb{Q}(\zeta_{p^n})}
\right\rbrace$
forms a projective system, i.e. for $n \geq 1$, we have
$$\pi\left( \theta^{\alpha}_{\mathbb{Q}(\zeta_{p^{n+1}})} \right) = \theta^{\alpha}_{\mathbb{Q}(\zeta_{p^n})} .$$
Suppose that $\alpha$ is a $p$-adic unit (i.e. $E$ has good ordinary reduction at $p$).
Then the $p$-adic $L$-function $L_p(E)$ of $E$ is defined by
\[
\xymatrix@R=0em{
\varprojlim_n \mathbb{Z}_p\llbracket \mathrm{Gal}( \mathbb{Q}(\zeta_{p^n})/\mathbb{Q}  ) \rrbracket \ar@{=}[r] &  \mathbb{Z}_p\llbracket \mathrm{Gal}( \mathbb{Q}(\zeta_{p^\infty})/\mathbb{Q}  ) \rrbracket \ar@{->>}[r] & \mathbb{Z}_p\llbracket \mathrm{Gal}( \mathbb{Q}_\infty/\mathbb{Q}  )  \rrbracket \\
\varprojlim_n \theta^{\alpha}_{\mathbb{Q}(\zeta_{p^n})} \ar@{|->}[r] & \theta^{\alpha}_{\mathbb{Q}(\zeta_{p^\infty})} \ar@{|->}[r] & L_p(E) .
}
\]
We have the following interpolation formulas:
\begin{align} \label{eqn:interpolation-trivial-character}
\begin{split}
\mathbf{1}\left( \theta^{\alpha}_{\mathbb{Q}(\zeta_{p^\infty})} \right) & = 
\left(1 - \dfrac{1}{\alpha} \right)^2 \cdot \theta_{\mathbb{Q}} \\
& = \left(1 - \dfrac{1}{\alpha} \right)^2 \cdot \dfrac{L(E,1)}{\Omega^+_E} .
\end{split}
\end{align}
If $\chi$ is a Dirichlet character of conductor $p^n$ ($\neq$ 1),
then
$$\chi\left( \theta^{\alpha}_{\mathbb{Q}(\zeta_{p^\infty})} \right) = \dfrac{\tau(\chi)}{\alpha^n} \cdot \dfrac{L(E, \overline{\chi},1)}{\Omega^{\chi(-1)}_E}.$$
These interpolation formulas also characterize $\theta^{\alpha}_{\mathbb{Q}(\zeta_{p^\infty})}$\footnote{This is related to being of the non-critical slope.}.

\subsection{Selmer groups over $\mathbb{Q}_{\infty}$ and the main conjecture} \label{subsec:Sel-Q-infty-main-conjecture}
\subsubsection{}
Let $M$ be any finitely generated torsion $\Lambda$-module.
Then there exists a $\Lambda$-homomorphism
$$M \to \Lambda / f^{e_1}_1 \oplus \cdots \oplus  \Lambda / f^{e_r}_r$$
with finite kernel and cokernel. The map between finitely generated $\Lambda$-modules with finite kernel and cokernel is called a \textbf{pseudo-isomorphism}.
Then 
$(f^{e_1}_1), \cdots ,( f^{e_r}_r)$ are uniquely determined by $M$, and
the characteristic ideal of $M$ is defined by
$$\mathrm{char}_{\Lambda}(M) = \left(  \prod^{r}_{i=1} f^{e_i}_i \right) \subseteq \Lambda .$$
This notion of characteristic ideals is an analogue of characteristic polynomials for linear algebra over $\Lambda$.
See \cite[$\S$13.2]{washington} for the basic of linear algebra over $\Lambda$.
\begin{thm}[Kato]
Assume that the image of $\rho$ contains a conjugate of $\mathrm{SL}_2(\mathbb{Z}_p)$ and $E$ has good ordinary reduction at $p$.
Then
$\mathrm{Sel}(\mathbb{Q}_\infty, E[p^\infty])^\vee = 
\mathrm{Hom}_{\mathbb{Z}_p}(\mathrm{Sel}(\mathbb{Q}_\infty, E[p^\infty]), \mathbb{Q}_p/\mathbb{Z}_p)$
is a finitely generated torsion $\Lambda$-module.
\end{thm}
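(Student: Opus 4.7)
The plan is to use Kato's zeta element $z_{\mathbb{Q}_\infty} \in \mathrm{H}^1_{\mathrm{Iw}}(\mathbb{Q}, T)$ from $\S$\ref{subsubsec:iwasawa-cohomology} as the input of the Iwasawa-theoretic Euler system machine. By the Poitou--Tate global duality long exact sequence tying $\mathrm{Sel}(\mathbb{Q}_\infty, E[p^\infty])^\vee$ to $\mathrm{H}^2_{\mathrm{Iw}}(\mathbb{Q}, T)$ and to local cohomology at $p$, it suffices to show that $\mathrm{H}^2_{\mathrm{Iw}}(\mathbb{Q}, T)$ is a finitely generated torsion $\Lambda$-module and that the local mismatch at $p$ between the strict condition and the Greenberg condition is also finitely generated torsion. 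The latter uses only the good ordinary filtration $0 \to T^+ \to T \to T^- \to 0$ with $T^-$ unramified: a direct local computation controls $\mathrm{H}^1_{\mathrm{Iw}}(\mathbb{Q}_p, T^-)$ and shows that it has $\Lambda$-rank zero, because the cyclotomic Frobenius acts by a unit on $T^-$ and the local Euler characteristic is pinned down.

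First I would collect the structural consequences of the big image hypothesis. It gives $T^{G_{\mathbb{Q}_\infty}} = 0$, and, by a standard Nakayama-type argument, the analogous vanishing holds modulo $p^k$ for all $k$. Combined with Tate's global Euler characteristic formula, this forces $\mathrm{H}^1_{\mathrm{Iw}}(\mathbb{Q}, T)$ to be a torsion-free $\Lambda$-module of generic rank one (weak Leopoldt for $T$, which holds here), and forces $\mathrm{H}^2_{\mathrm{Iw}}(\mathbb{Q}, T)$ to have generic $\Lambda$-rank zero. In particular, $\mathrm{H}^2_{\mathrm{Iw}}(\mathbb{Q}, T)$ is automatically finitely generated, and only the torsion-ness requires real input. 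The Iwasawa-theoretic Euler system bound (Rubin's book, Theorem 2.3.3; or Kato, $\S$13.5) reduces this torsion-ness to producing a single element of $\mathrm{H}^1_{\mathrm{Iw}}(\mathbb{Q}, T)$ that is not $\Lambda$-torsion, since any such element $z$ will force $\mathrm{char}_\Lambda(\mathrm{H}^2_{\mathrm{Iw}}(\mathbb{Q}, T))$ to divide $\mathrm{char}_\Lambda(\mathrm{H}^1_{\mathrm{Iw}}(\mathbb{Q}, T)/\Lambda\cdot z)$, which is nonzero.

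The candidate is of course $z_{\mathbb{Q}_\infty}$, and its non-torsion-ness is the easy part: applying the Coleman map $\mathrm{Col}\colon \mathrm{H}^1_{\mathrm{Iw}}(\mathbb{Q}_p, T) \to \Lambda$ from $\S$\ref{subsubsec:iwasawa-cohomology} yields $\mathrm{Col}(\mathrm{loc}_p\, z_{\mathbb{Q}_\infty}) = L_p(E) \neq 0$ by Rohrlich's Theorem \ref{thm:rohrlich-non-vanishing}. The hard part will be the invocation of the Euler system bound: Kolyvagin's derivative construction must be carried out in the cyclotomic Iwasawa setting, which requires an ample supply of auxiliary Kolyvagin primes $\ell$ with prescribed Frobenius behaviour on $T/p^k T$ simultaneously over all layers $\mathbb{Q}_n$. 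The big image assumption is exactly what Chebotarev needs to produce such primes in abundance, and one then has to verify that the corresponding derivative classes lie in the correct transverse Selmer groups over $\mathbb{Q}(\zeta_n)_\infty$, so that global duality converts them into relations in $\mathrm{H}^2_{\mathrm{Iw}}(\mathbb{Q}, T)$. This is the most technical ingredient in Kato's $\S$13, and everything in the present theorem reduces to it.
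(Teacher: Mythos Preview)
Your overall strategy is correct and is precisely what underlies the results the paper cites: the paper's own proof is nothing more than a reference to Rohrlich's non-vanishing (Theorem~\ref{thm:rohrlich-non-vanishing}) together with Kato's divisibility (Theorem~\ref{thm:kato-divisibility}), and you are unpacking the cotorsion assertion contained in the latter. The reduction to the non-$\Lambda$-torsion-ness of $z_{\mathbb{Q}_\infty}$ via the Euler system bound, and the verification of that via $\mathrm{Col}(\mathrm{loc}_p z_{\mathbb{Q}_\infty})=L_p(E)\neq 0$, are exactly right. (One cosmetic point: your parenthetical ``weak Leopoldt for $T$, which holds here'' is the very statement you then say requires real input; just delete the parenthetical.)

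There is, however, one genuine slip in the local step. The module $\mathrm{H}^1_{\mathrm{Iw}}(\mathbb{Q}_p,T^-)$ does \emph{not} have $\Lambda$-rank zero: for any one-dimensional $T^-$ the local Iwasawa $\mathrm{H}^1$ at $p$ has $\Lambda$-rank one (local Euler characteristic over the tower, or Perrin-Riou). Hence the ``local mismatch'' between the strict and the Greenberg conditions is not torsion as a bare local module, and your purely local computation cannot close the gap. What \emph{is} torsion is the cokernel of the global-to-local map $\mathrm{H}^1_{\mathrm{Iw}}(\mathbb{Q},T)\to\mathrm{H}^1_{\mathrm{Iw}}(\mathbb{Q}_p,T^-)$, and this is exactly where your Coleman-map argument belongs: since $\mathrm{Col}$ factors through $\mathrm{H}^1_{\mathrm{Iw}}(\mathbb{Q}_p,T^-)$ and sends $\mathrm{loc}_p(z_{\mathbb{Q}_\infty})$ to $L_p(E)\neq 0$, the image of $\mathrm{loc}_p$ already has rank one there, so the cokernel is torsion. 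This is precisely the exact sequence in \S\ref{subsec:global-duality}, whose middle term is $\Lambda/L_p(E)$. All the ingredients are in your write-up; you only routed Rohrlich to the wrong slot.
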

\begin{proof}
This follows from Theorem \ref{thm:rohrlich-non-vanishing} and Theorem \ref{thm:kato-divisibility}.
See also \cite[Thms. 12.4 and 17.4]{kato-euler-systems} for the statement without the large image assumption.
\end{proof}
We recall Mazur's control theorem \cite{mazur-IMC-for-E, greenberg-lnm}.
\begin{prop}[Mazur] \label{prop:control-thoerem}
Assume $E$ has good ordinary reduction at $p$.
Then the natural restriction map
$$\mathrm{Sel}(\mathbb{Q}, E[p^\infty]) \to \mathrm{Sel}(\mathbb{Q}_{\infty}, E[p^\infty])^{\Gamma}$$
has finite kernel and cokernel
where $\Gamma = \mathrm{Gal}(\mathbb{Q}_{\infty}/\mathbb{Q})$.
\end{prop}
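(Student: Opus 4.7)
The plan is to apply the snake lemma to the natural commutative diagram whose rows are the defining exact sequences of the two Selmer groups, and then bound the global and local kernels and cokernels one piece at a time. With $\Sigma$ chosen as in the definition and
$$\mathcal{L}_v(F) := \prod_{w \mid v} \mathrm{H}^1(F_w, E[p^\infty]) \Big/ \mathrm{im}\bigl( E(F_w) \otimes \mathbb{Q}_p/\mathbb{Z}_p \bigr),$$
I would work with the diagram
\[
\begin{tikzcd}
0 \arrow[r] & \mathrm{Sel}(\mathbb{Q}, E[p^\infty]) \arrow[r] \arrow[d, "s"] & \mathrm{H}^1(\mathbb{Q}_\Sigma/\mathbb{Q}, E[p^\infty]) \arrow[r] \arrow[d, "g"] & \prod_{v \in \Sigma} \mathcal{L}_v(\mathbb{Q}) \arrow[d, "\ell"] \\
0 \arrow[r] & \mathrm{Sel}(\mathbb{Q}_\infty, E[p^\infty])^\Gamma \arrow[r] & \mathrm{H}^1(\mathbb{Q}_\Sigma/\mathbb{Q}_\infty, E[p^\infty])^\Gamma \arrow[r] & \Bigl(\prod_{v \in \Sigma} \mathcal{L}_v(\mathbb{Q}_\infty)\Bigr)^\Gamma .
\end{tikzcd}
\]
The snake lemma then reduces the finiteness of $\ker(s)$ and $\mathrm{coker}(s)$ to finiteness of $\ker(g)$, $\mathrm{coker}(g)$, and $\ker(\ell_v)$ for each $v \in \Sigma$ (the cokernel step requires a short comparison of images via the Poitou--Tate sequence, which is standard).

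For the global maps, inflation--restriction gives $\ker(g) \simeq \mathrm{H}^1(\Gamma, E[p^\infty]^{G_{\mathbb{Q}_\infty, \Sigma}})$, which is finite because $E[p^\infty]^{G_{\mathbb{Q}_\infty}}$ is a finite group (in the non-CM case it is a subgroup of the torsion of $E(\mathbb{Q}_\infty)$, which is finite), and $\mathrm{coker}(g) \subseteq \mathrm{H}^2(\Gamma, E[p^\infty]^{G_{\mathbb{Q}_\infty, \Sigma}}) = 0$ since $\Gamma \simeq \mathbb{Z}_p$ has $p$-cohomological dimension one. For a place $v \in \Sigma$ with $v \ne p$, the extension $\mathbb{Q}_\infty/\mathbb{Q}$ is unramified at $v$ and its decomposition group in $\Gamma$ is open, so there are only finitely many primes $w \mid v$ of $\mathbb{Q}_\infty$ and $E(\mathbb{Q}_{\infty,w})[p^\infty]$ is finite for each of them. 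Combined with Tate local duality, this bounds both $\ker(\ell_v)$ and $\mathrm{coker}(\ell_v)$ by finite pieces of Galois cohomology of finite modules.

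The crux, and the main obstacle, is the case $v = p$, where $\mathbb{Q}_\infty/\mathbb{Q}$ is totally ramified and the local Kummer condition carries genuine arithmetic content. Here the good ordinary hypothesis supplies the canonical filtration of $G_{\mathbb{Q}_p}$-modules
$$0 \to \widehat{E}[p^\infty] \to E[p^\infty] \to \widetilde{E}[p^\infty] \to 0,$$
in which $\widetilde{E}[p^\infty]$ is unramified and $\widehat{E}[p^\infty]$ is the $p$-power torsion of the formal group. I would follow Coates--Greenberg to identify the image of the Kummer map in $\mathrm{H}^1(\mathbb{Q}_p, E[p^\infty])$ with the kernel of the natural map to $\mathrm{H}^1(I_p, \widetilde{E}[p^\infty])$, and likewise over $\mathbb{Q}_{\infty, p}$. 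Feeding this description into an inflation--restriction analysis reduces both $\ker(\ell_p)$ and $\mathrm{coker}(\ell_p)$ to $\Gamma_p$-cohomology of the finite group $\widetilde{E}(\mathbb{Q}_{\infty, p})[p^\infty]$, yielding the desired finiteness. Assembling these bounds in the snake lemma produces Mazur's control theorem.
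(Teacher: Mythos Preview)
The paper does not prove this proposition; it merely records it with the citation ``We recall Mazur's control theorem \cite{mazur-IMC-for-E, greenberg-lnm}.''  Your sketch is precisely the standard argument one finds in Greenberg's lecture notes: the snake lemma applied to the defining diagram, inflation--restriction to handle the global column, and the Coates--Greenberg identification of the ordinary local condition at $p$ via the filtration $0 \to \widehat{E}[p^\infty] \to E[p^\infty] \to \widetilde{E}[p^\infty] \to 0$.  So there is nothing to compare; your proof is the proof the paper is pointing to.

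Two small cosmetic remarks.  First, for bounding $\mathrm{coker}(s)$ the diagram chase only uses $\ker(\ell)$ (together with $\mathrm{coker}(g)=0$); you do not need $\mathrm{coker}(\ell)$, and no Poitou--Tate input is actually required here.  Second, the finiteness of $E(\mathbb{Q}_\infty)[p^\infty]$ is not special to the non-CM case: under the good-reduction hypothesis it follows from Imai's theorem, so your parenthetical singling out non-CM curves is unnecessary (and the paper's statement makes no CM restriction).  Neither point affects the correctness of the argument.
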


\subsubsection{}
We recall Conjecture \ref{conj:imc-mazur-greenberg} and Theorem \ref{thm:kato-divisibility}.
\begin{conj}[Mazur, Greenberg]
Assume $E$ has good ordinary reduction at $p$.
Then
$$( L_p(E) ) = \mathrm{char}_{\Lambda}( \mathrm{Sel}(\mathbb{Q}_\infty, E[p^\infty])^\vee )$$
as ideals of $\Lambda$.
\end{conj}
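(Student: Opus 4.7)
The plan is to establish the equality of ideals in $\Lambda$ by proving the two opposite divisibilities separately. The containment $(L_p(E)) \subseteq \mathrm{char}_{\Lambda}(\mathrm{Sel}(\mathbb{Q}_\infty, E[p^\infty])^\vee)$ is already available as Theorem \ref{thm:kato-divisibility}, under the hypothesis that the image of $\rho$ contains a conjugate of $\mathrm{SL}_2(\mathbb{Z}_p)$. Since both sides are ideals in the regular ring $\Lambda$, proving the reverse containment will force equality. The reverse containment cannot come from Kato's Euler system alone, because an Euler system, combined with global duality, can only bound the Selmer group from above.

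For Kato's side, I would reproduce the chain laid out in \S\ref{subsubsec:iwasawa-cohomology}: start from the Beilinson--Kato zeta element $z_{\mathbb{Q}_\infty} \in \mathrm{H}^1_{\mathrm{Iw}}(\mathbb{Q}, T)$ arising by norm-compatibility from the construction sketched in \S\ref{subsec:rough-picture}, apply the Coleman map to identify $\mathrm{Col}(\mathrm{loc}_p(z_{\mathbb{Q}_\infty}))$ with $L_p(E)$ (this is the explicit reciprocity law of \cite[Thm. 12.5]{kato-euler-systems}), and then run the Euler-to-Kolyvagin-system derivative process whose global Poitou--Tate duality output is precisely the desired upper bound on $\mathrm{char}_{\Lambda}(\mathrm{Sel}(\mathbb{Q}_\infty, E[p^\infty])^\vee)$. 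For the opposite containment $\mathrm{char}_{\Lambda}(\mathrm{Sel}(\mathbb{Q}_\infty, E[p^\infty])^\vee) \subseteq (L_p(E))$, the plan is to invoke Skinner--Urban \cite{skinner-urban}, whose strategy runs in the reverse direction: construct a Klingen--Eisenstein series on the unitary group $U(2,2)$ whose constant term involves $L_p(E)$, deform it $p$-adically through a Hida family to produce a cuspidal family congruent to it modulo $L_p(E)$, and then, via a Ribet--Wiles-style lattice argument on the attached Galois representation, extract a submodule of $\mathrm{Sel}(\mathbb{Q}_\infty, E[p^\infty])^\vee$ whose characteristic ideal is divisible by $L_p(E)$.

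Combining Kato's upper bound with Skinner--Urban's lower bound forces the equality under the intersection of their hypotheses (essentially residual absolute irreducibility together with suitable ramification conditions at bad primes and at $p$). The hard part, by far, is the Skinner--Urban step: while Kato's divisibility uses only the geometry of modular curves, Siegel units, and $K$-theory, the reverse divisibility requires detailed control of automorphic forms on a higher-rank group, a delicate lattice or pseudo-representation argument to manufacture Galois classes out of Eisenstein congruences, and careful handling of local conditions. Removing these remaining technical hypotheses so as to obtain the conjecture for \emph{every} $E$ with good ordinary reduction at $p$ is still an active research direction, and it is the reason the statement is presented here as a conjecture rather than as a theorem.
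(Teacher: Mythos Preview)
The statement you were asked to prove is a \emph{conjecture} in the paper, not a theorem: it is the restatement of Conjecture~\ref{conj:imc-mazur-greenberg} in \S\ref{subsec:Sel-Q-infty-main-conjecture}, and the paper offers no proof of it. What the paper does is exactly what you outline: it records Kato's divisibility (Theorem~\ref{thm:kato-divisibility}) and Skinner--Urban's opposite divisibility (Theorem~\ref{thm:skinner-urban}), and observes that their combination yields the equality only under the intersection of their hypotheses. Your proposal is therefore not a proof of the conjecture but a faithful summary of the current state of affairs, and you correctly flag this in your final paragraph.

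So there is no gap in your reasoning per se, but there is a mismatch of expectations: you cannot produce a proof of this statement because none exists in the literature without auxiliary hypotheses. If the intent was to prove the conjecture \emph{under the combined Kato and Skinner--Urban assumptions}, then your sketch is accurate and aligns with how the paper presents the two inputs; in that case the only thing to add is that the paper itself does not claim this as a theorem, and your write-up should make explicit which hypotheses you are imposing (large Galois image for Kato, irreducibility of $E[p]$ plus the existence of a prime $\ell \parallel N$ with $E[p]$ ramified at $\ell$ for Skinner--Urban).
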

\begin{thm}[Kato]
Assume $E$ has good ordinary reduction at $p$.
If the image of $\rho$ contains a conjugate of $\mathrm{SL}_2(\mathbb{Z}_p)$,
then
$$( L_p(E) ) \subseteq \mathrm{char}_{\Lambda}( \mathrm{Sel}(\mathbb{Q}_\infty, E[p^\infty])^\vee )$$
as ideals of $\Lambda$.
Without the image assumption, we still have the same divisibility as ideals of $\Lambda \otimes \mathbb{Q}_p$.
\end{thm}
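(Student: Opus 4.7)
The plan is to deploy three ingredients in tandem: (i) the Iwasawa-theoretic Beilinson--Kato class $z_{\mathbb{Q}_\infty} \in \mathrm{H}^1_{\mathrm{Iw}}(\mathbb{Q}, T)$ extracted from the full Euler system, (ii) an explicit reciprocity law identifying the local image of $z_{\mathbb{Q}_\infty}$ at $p$ with the $p$-adic $L$-function via a Coleman map, and (iii) the Euler/Kolyvagin system machinery of Rubin and Mazur--Rubin, which turns the existence of such a global class with prescribed local behavior into an upper bound on the Selmer group.

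First I would construct the Iwasawa zeta element as the inverse limit $z_{\mathbb{Q}_\infty} = \varprojlim_n z_{\mathbb{Q}_n}$, where $z_{\mathbb{Q}_n}$ comes from the procedure sketched in $\S$\ref{subsec:rough-picture} applied with cyclotomic twist level $p^n$; norm-compatibility along the cyclotomic tower is the trivial-Euler-factor case of the Euler system relations. More generally one assembles the full Euler system $\{z_{\mathbb{Q}(\zeta_{mp^n})}\}_{m,n}$ whose tame norm relations (involving the Euler factors $1 - a_\ell\sigma_\ell^{-1} + \ell\sigma_\ell^{-2}$) feed Kolyvagin's derivative process at auxiliary primes $\ell$.

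Next I would recall the Coleman map $\mathrm{Col}: \mathrm{H}^1_{\mathrm{Iw}}(\mathbb{Q}_p, T) \to \Lambda$ from $\S$\ref{subsubsec:iwasawa-cohomology}, which in the good ordinary case interpolates Bloch--Kato's dual exponential $\mathrm{exp}^*_{\omega_E}$ across cyclotomic twists and is injective with controlled cokernel. The crucial \emph{explicit reciprocity law} is the identity
\begin{equation*}
\mathrm{Col}\bigl(\mathrm{loc}_p(z_{\mathbb{Q}_\infty})\bigr) \doteq L_p(E) \quad \text{in } \Lambda .
\end{equation*}
At the trivial character this specializes to Theorem~\ref{thm:kato-finiteness-mordell-weil}, and at finite-order characters of $\mathrm{Gal}(\mathbb{Q}_\infty/\mathbb{Q})$ it reproduces the interpolation formulas of $\S$\ref{subsec:betti-construction}. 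I expect this step to be the \textbf{main obstacle}: carried out in \cite[Thm.~9.6 and \S 16]{kato-euler-systems}, it rests on computing the dual exponential of the \'etale Beilinson--Kato class in terms of an explicit weight-two modular form, which then pairs against $f_E$ via the Rankin--Selberg integral; the calculation uses nontrivial $p$-adic Hodge theory to compare \'etale and de Rham realizations.

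Finally I would apply the Euler/Kolyvagin system machinery as in \cite[Chap.~II]{rubin-book} or \cite{mazur-rubin-book}. The large-image hypothesis on $\rho$ supplies the Chebotarev input to produce enough auxiliary primes at which to form derivative classes, and global Poitou--Tate duality then yields a divisibility bounding the characteristic ideal of the dual Selmer group with \emph{strict} condition at $p$ by the index of $\mathrm{loc}_p(z_{\mathbb{Q}_\infty})$ in $\mathrm{H}^1_{\mathrm{Iw}}(\mathbb{Q}_p, T)$; via the Coleman map this index is precisely $(L_p(E))$. One last application of global duality trades the strict local condition at $p$ for the Greenberg ordinary condition that defines $\mathrm{Sel}(\mathbb{Q}_\infty, E[p^\infty])$, producing the asserted
\begin{equation*}
(L_p(E)) \subseteq \mathrm{char}_\Lambda\bigl(\mathrm{Sel}(\mathbb{Q}_\infty, E[p^\infty])^\vee\bigr) .
\end{equation*}
The weakening to an inclusion merely in $\Lambda \otimes \mathbb{Q}_p$ when the large image hypothesis is dropped reflects exactly the obstruction to producing enough Chebotarev primes when the residual representation degenerates; removing the hypothesis requires the non-vanishing input from Rohrlich (Theorem~\ref{thm:rohrlich-non-vanishing}) combined with a rank-one argument over $\Lambda \otimes \mathbb{Q}_p$.
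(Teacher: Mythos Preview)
Your proposal is correct and mirrors the paper's own sketch in \S\ref{subsec:global-duality}: Kato's Euler-system bound on the $p$-strict Selmer group $\mathrm{Sel}_0$ (Theorem~\ref{thm:kato-divisibility-sel0}), combined with the Coleman-map identification $\mathrm{Col}(\mathrm{loc}_p z_{\mathbb{Q}_\infty}) = L_p(E)$ and the Poitou--Tate exact sequence, yields the Greenberg--Mazur divisibility exactly as you describe. One minor correction: the Euler-system machinery bounds $\mathrm{char}_\Lambda(\mathrm{Sel}_0^\vee)$ by the \emph{global} index $\mathrm{char}_\Lambda\bigl(\mathrm{H}^1_{\mathrm{Iw}}(\mathbb{Q},T)/\Lambda z_{\mathbb{Q}_\infty}\bigr)$ rather than the local one---the local quotient (identified with $\Lambda/(L_p(E))$ via $\mathrm{Col}$) enters only at the final global-duality step comparing $\mathrm{Sel}_0$ with $\mathrm{Sel}$.
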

By using a completely different idea (Eisenstein congruences), Skinner--Urban proved the opposite divisibility \cite{skinner-urban}.
\begin{thm}[Skinner--Urban] \label{thm:skinner-urban}
Assume $E$ has good ordinary reduction at $p$.
If $E[p]$ is irreducible and there exists a prime $\ell$ exactly dividing $N$ such that $E[p]$ is ramified at $\ell$, then
$$( L_p(E) ) \supseteq \mathrm{char}_{\Lambda}( \mathrm{Sel}(\mathbb{Q}_\infty, E[p^\infty])^\vee )$$
as ideals of $\Lambda$.
\end{thm}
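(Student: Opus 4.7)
The plan is to follow the Eisenstein congruence strategy, which is fundamentally different from Kato's Euler system approach used for the opposite divisibility. Rather than bounding Selmer groups from above by constructing cohomology classes that pair trivially against Selmer elements, one instead produces nontrivial Selmer elements whose size is controlled from below by $L_p(E)$. The workhorse is a higher-rank automorphic group, namely the quasi-split unitary similitude group $\mathrm{GU}(2,2)$ attached to an imaginary quadratic field $K/\mathbb{Q}$ in which $p$ splits. First, I would choose $K$ so that $p$ splits and so that the sign of the functional equation of $L(E/K, s)$ is $+1$, which lets one transfer $f_E$ via base change to a cuspidal automorphic representation $\pi$ on $\mathrm{GL}_2/K$, and from there form a Klingen-type Eisenstein series $E_\pi$ on $\mathrm{GU}(2,2)$ whose constant term encodes $L_p(E)$ (through a product of the cyclotomic Dirichlet $p$-adic $L$-function and $L_p(E)$).

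Next I would work on the $p$-adic side, building an Eisenstein family $\mathcal{E}$ over a suitable affinoid cover of the weight space for $\mathrm{GU}(2,2)$ specializing to $E_\pi$ and having constant term divisible by $L_p(E)$. The central lemma — and this is the main technical engine — is a Galois-theoretic lattice construction: one shows that after multiplying $\mathcal{E}$ by a unit and subtracting off an appropriate classical form, what remains is congruent modulo $L_p(E)$ to a \emph{cuspidal} Hida family $\mathcal{F}$. Galois representations $\rho_{\mathcal{F}}$ attached to classical specializations of $\mathcal{F}$ are then (after stabilizing a lattice) reducible modulo $L_p(E)$ with semi-simplification the direct sum of the Galois representation of $f_E$ and a character, and the resulting extensions produce classes in $\mathrm{Sel}(\mathbb{Q}_\infty, E[p^\infty])^\vee$ whose characteristic ideal is divisible by $L_p(E)$, yielding the claimed containment.

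The role of the two hypotheses is to make this lattice-extension argument actually output Selmer elements: irreducibility of $E[p]$ ensures that the residual representation is sufficiently non-split so that the reduction mod $L_p(E)$ of $\rho_{\mathcal{F}}$ really does split as a direct sum (no stray invariant line forces the extension to be trivial), and the existence of a prime $\ell \| N$ at which $E[p]$ is ramified is what one uses both to pick up a nontrivial local component of the Klingen Eisenstein series at $\ell$ (so that the $p$-adic family $\mathcal{F}$ exists at the right level) and to guarantee that the constructed extension class is non-crystalline in the right way to land in the Greenberg Selmer group.

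The hard part will be the cuspidality/congruence step: proving that the non-constant coefficients of $\mathcal{E}$ are units at the generic specialization while its constant term is exactly divisible by $L_p(E)$, and then upgrading this divisibility in the space of $p$-adic automorphic forms to a congruence with a genuine cuspidal Hida family $\mathcal{F}$. This requires delicate Fourier--Jacobi expansion computations on $\mathrm{GU}(2,2)$ and a careful integrality analysis of pull-back sections, together with Galois deformation control to build the lattice whose extensions populate the Selmer group. Once the congruent cuspidal family is in hand, the Ribet--Mazur--Wiles style extraction of Selmer classes and the comparison of characteristic ideals via Mazur's control theorem and the structure theory of $\Lambda$-modules is comparatively formal.
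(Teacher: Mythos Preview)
The paper does not give its own proof of this statement: it is stated as a theorem of Skinner--Urban and attributed to \cite{skinner-urban}, with only the remark that it is proved ``by using a completely different idea (Eisenstein congruences)''. Your outline is a reasonable high-level sketch of precisely that method---Klingen-type Eisenstein series on $\mathrm{GU}(2,2)$ over an auxiliary imaginary quadratic field $K$ with $p$ split, $p$-adic interpolation into a Hida family, Fourier--Jacobi coefficient computations to establish cuspidal congruences modulo the $p$-adic $L$-function, and a Ribet-style lattice argument to extract Selmer classes---so in that sense you are aligned with what the paper cites.

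A few points worth sharpening if you intend this as an actual proof sketch rather than an impressionistic summary. First, the Skinner--Urban argument proves a \emph{three-variable} main conjecture over $K$; getting back to the one-variable statement over $\mathbb{Q}$ requires a specialization step in which one controls the anticyclotomic part, typically via non-vanishing results for anticyclotomic $L$-values (Vatsal, Cornut). You omit this descent entirely. Second, your description of the role of the prime $\ell \Vert N$ with $E[p]$ ramified is not quite right: its primary function is to ensure, via a local computation at $\ell$, that the extension classes produced by the lattice argument satisfy the required local condition at $\ell$ (and to rule out that the congruent cusp form is itself Eisenstein-like); it is not about ``non-crystalline in the right way'' at $p$. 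Third, the irreducibility of $E[p]$ is used so that the residual representation of the cuspidal family is irreducible on the two-dimensional piece, which is what makes the Ribet lattice argument produce a nontrivial extension rather than a split one.
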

Therefore, Conjecture \ref{conj:imc-mazur-greenberg} is confirmed for a large class of elliptic curves with good ordinary reduction, and we now illustrate its application to Birch and Swinnerton-Dyer conjecture.
\begin{align*}
L(E, 1) = 0 & \Leftrightarrow \mathbf{1}(L_p(E)) = 0  & \textrm{(\ref{eqn:interpolation-trivial-character})} \\
 &  \Leftrightarrow \mathrm{length} \left( \mathrm{Sel}(\mathbb{Q}_\infty, E[p^\infty])^\vee \right)_{\Gamma} = \infty & \textrm{(\ref{eqn:imc-mazur-greenberg})} \\
 &  \Leftrightarrow \mathrm{length} \left( \mathrm{Sel}(\mathbb{Q}_\infty, E[p^\infty])^{\Gamma} \right)^\vee = \infty \\
 &  \Leftrightarrow \mathrm{length} \left( \mathrm{Sel}(\mathbb{Q}, E[p^\infty]) \right)^\vee = \infty & \textrm{Prop. \ref{prop:control-thoerem}}
\end{align*}
This equivalence strengthens Theorem \ref{thm:kato-finiteness-mordell-weil-sha} to the if and only if statement.
\begin{rem}
We actually need the Iwasawa main conjecture \emph{localized at the augmentation ideal} only.
This type of idea generalizes the Selmer rank one case, which is called the rank one $p$-converse to the theorem of Gross--Zagier and Kolyvagin. See \cite{kim-p-converse} for example.
See also \cite{skinner-converse, wei-zhang-mazur-tate, burungale-tian-p-converse} for the development of the rank one $p$-converse results.
\end{rem}
\subsubsection{}
We briefly discuss the $p$-adic Birch and Swinnerton-Dyer conjecture following \cite{mtt}, \cite[$\S$18]{kato-euler-systems}, and \cite{kato-icm-2002}.
\begin{conj}[$p$-adic BSD]
Assume $E$ has good ordinary reduction at $p$. Then
$$\mathrm{cork}_{\mathbb{Z}_p} \mathrm{Sel}(\mathbb{Q}, E[p^\infty])  = \mathrm{ord}_{\chi = \mathbf{1}} L_p(E) .$$
\end{conj}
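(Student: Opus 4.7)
The plan is to reduce the conjecture to an algebraic statement about $X := \mathrm{Sel}(\mathbb{Q}_\infty, E[p^\infty])^\vee$ by combining the Iwasawa main conjecture (Conjecture \ref{conj:imc-mazur-greenberg}) with Mazur's control theorem (Proposition \ref{prop:control-thoerem}). Under the large-image and Skinner--Urban hypotheses one has the full equality $(L_p(E)) = \mathrm{char}_{\Lambda}(X)$ via Theorems \ref{thm:kato-divisibility} and \ref{thm:skinner-urban}, and $X$ is a finitely generated torsion $\Lambda$-module. Fix a pseudo-isomorphism $X \sim \bigoplus_{i} \Lambda/(f_i^{b_i})$ as in \S\ref{subsec:Sel-Q-infty-main-conjecture}, with each $f_i$ either $p$ or a distinguished irreducible polynomial.

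On the analytic side, reading off the order of vanishing at $T=0$ gives
$$\mathrm{ord}_{\chi = \mathbf{1}} L_p(E) = \mathrm{ord}_{T=0} \mathrm{char}_{\Lambda}(X) = \sum_{i \,:\, f_i = T} b_i.$$
No trivial zero appears: good ordinary reduction forces $\alpha \neq 1$ (otherwise $\#\widetilde{E}(\mathbb{F}_p) = p+1-a_p = 0$), so the factor $(1-1/\alpha)^2$ in (\ref{eqn:interpolation-trivial-character}) is a $p$-adic unit. On the Selmer side, Pontryagin duality together with Mazur's control theorem yields
$$\mathrm{cork}_{\mathbb{Z}_p} \mathrm{Sel}(\mathbb{Q}, E[p^\infty]) = \mathrm{cork}_{\mathbb{Z}_p} \mathrm{Sel}(\mathbb{Q}_\infty, E[p^\infty])^{\Gamma} = \mathrm{rank}_{\mathbb{Z}_p} X_\Gamma,$$
and a direct computation on the elementary decomposition shows $\mathrm{rank}_{\mathbb{Z}_p} X_\Gamma = \#\{i : f_i = T\}$: each summand $\Lambda/T^{b_i}$ contributes $(\Lambda/T^{b_i})/T \cong \mathbb{Z}_p$ (rank $1$), while every summand $\Lambda/f_i^{b_i}$ with $f_i$ coprime to $T$ gives $\Lambda/(T, f_i^{b_i})$, which is finite (rank $0$).

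Comparing the two sides reduces the conjecture to showing $b_i = 1$ for every index with $f_i = T$, equivalently that $X$ contains no elementary summand $\Lambda/T^b$ with $b \geq 2$. This is the main obstacle. Algebraically it asserts that the leading term of $L_p(E)$ at $T = 0$ is governed precisely by $\mathrm{rank}_{\mathbb{Z}_p} X_\Gamma$ with no hidden higher-order $T$-divisibility; arithmetically it is equivalent to the non-degeneracy of the cyclotomic $p$-adic height pairing on $\mathrm{Sel}(\mathbb{Q}, E[p^\infty])$, whose regulator controls the extra divisibility via the conjectural $p$-adic leading-term formula. Thus the plan yields an unconditional proof when $L(E,1) \neq 0$ (both sides vanish trivially by the main conjecture and the control theorem), but in positive analytic rank the equality must be regarded as conditional on the non-degeneracy of cyclotomic $p$-adic heights, in the spirit of Schneider's conjecture.
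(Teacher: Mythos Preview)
The statement you are attempting to prove is recorded in the paper as a \emph{conjecture}, not a theorem; the paper offers no proof and immediately passes to Kato's one-sided inequality $\mathrm{cork}_{\mathbb{Z}_p}\mathrm{Sel}(\mathbb{Q},E[p^\infty]) \leq \mathrm{ord}_{\chi=\mathbf{1}} L_p(E)$ as the subsequent corollary. So there is no ``paper's own proof'' to compare against, and your honest flagging of the argument as conditional is exactly right: the reduction you carry out is the standard one, and the residual obstruction (semi-simplicity of $X$ at $T=0$, equivalently non-degeneracy of the cyclotomic $p$-adic height) is genuinely open in general. This is Schneider's conjecture, as you say.

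Two remarks on the details. First, the inequality direction already follows from Kato's divisibility alone rather than the full main conjecture: from $(L_p(E)) \subseteq \mathrm{char}_\Lambda(X)$ one gets
\[
\mathrm{ord}_{T=0} L_p(E) \;\geq\; \mathrm{ord}_{T=0}\mathrm{char}_\Lambda(X) \;=\; \sum_{i:\,f_i=T} b_i \;\geq\; \#\{i:f_i=T\} \;=\; \mathrm{rank}_{\mathbb{Z}_p} X_\Gamma,
\]
and then the control theorem finishes. This is precisely the content of the paper's corollary, and it does not require the Skinner--Urban input. Second, your claim that $(1-1/\alpha)^2$ is a $p$-adic \emph{unit} is false at anomalous primes: since $\alpha \equiv a_p \pmod{p}$, one has $p \mid (\alpha-1)$ exactly when $p \mid \#\widetilde{E}(\mathbb{F}_p)$. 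What your argument actually needs, and what your parenthetical correctly shows, is only that $(1-1/\alpha)$ is \emph{nonzero}; this suffices to rule out an exceptional zero in the good ordinary case and to conclude that both sides vanish when $L(E,1)\neq 0$.
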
 
Kato proved the following inequality which bounds Selmer ranks (so Mordell--Weil ranks) \cite[Thm. 18.4]{kato-euler-systems}.
\begin{cor}[Kato]
$$\mathrm{cork}_{\mathbb{Z}_p} \mathrm{Sel}(\mathbb{Q}, E[p^\infty]) \leq \mathrm{ord}_{\chi = \mathbf{1}} L_p(E) .$$
\end{cor}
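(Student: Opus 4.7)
The plan is to run a chain of comparisons from the $p$-adic $L$-function down to the Selmer group over $\mathbb{Q}$, losing only one inequality at each step. Set $X = \mathrm{Sel}(\mathbb{Q}_\infty, E[p^\infty])^\vee$ and let $T$ be a topological generator of the augmentation ideal of $\Lambda \simeq \mathbb{Z}_p\llbracket T \rrbracket$, so that $\mathrm{ord}_{\chi = \mathbf{1}} L_p(E) = \mathrm{ord}_T L_p(E)$ and orders of vanishing at $T$ are well-defined on any nonzero element of $\Lambda$.

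First, Theorem \ref{thm:kato-divisibility} gives the containment $(L_p(E)) \subseteq \mathrm{char}_\Lambda(X)$, i.e.\ the characteristic ideal divides $L_p(E)$; writing $\mathrm{char}_\Lambda(X) = (\prod_i f_i^{e_i})$ up to pseudo-isomorphism, we therefore obtain $\mathrm{ord}_T \mathrm{char}_\Lambda(X) \leq \mathrm{ord}_T L_p(E)$. Next, I would verify $\mathrm{rank}_{\mathbb{Z}_p} X_\Gamma \leq \mathrm{ord}_T \mathrm{char}_\Lambda(X)$ using the structure theorem: the quotient $\Lambda/(f_i^{e_i}, T)$ is $\mathbb{Z}_p/(f_i(0)^{e_i})$, which is $\mathbb{Z}_p$-free of rank $1$ when $T \mid f_i$ and finite otherwise, whereas every factor with $T \mid f_i$ contributes at least $e_i \geq 1$ to $\mathrm{ord}_T \prod_i f_i^{e_i}$. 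Pontryagin duality (via the adjoint $\Lambda$-action on $\mathrm{Sel}(\mathbb{Q}_\infty, E[p^\infty]) = X^\vee$) then identifies $\mathrm{rank}_{\mathbb{Z}_p} X_\Gamma$ with $\mathrm{cork}_{\mathbb{Z}_p} \mathrm{Sel}(\mathbb{Q}_\infty, E[p^\infty])^\Gamma$, and Mazur's control theorem (Proposition \ref{prop:control-thoerem}) equates the latter with $\mathrm{cork}_{\mathbb{Z}_p} \mathrm{Sel}(\mathbb{Q}, E[p^\infty])$ since the restriction map has finite kernel and cokernel. Concatenating these three inequalities yields the claim.

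The hard parts are already inputs: Kato's divisibility is where the Euler system machinery is invoked, and Mazur's control theorem is the nontrivial ingredient from ordinary Iwasawa theory. The one subtlety worth flagging is that Theorem \ref{thm:kato-divisibility} as stated carries a large-image hypothesis, but the excerpt notes that the divisibility always holds in $\Lambda \otimes \mathbb{Q}_p$; since the order of vanishing at $T$ is detected by the distinguished polynomial part and is insensitive to units in $\Lambda \otimes \mathbb{Q}_p$, the argument goes through unconditionally, which is consistent with the hypothesis-free statement of the corollary.
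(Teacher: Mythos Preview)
The paper does not actually supply a proof of this corollary; it simply records the statement with a citation to \cite[Thm.~18.4]{kato-euler-systems}. Your argument is correct and is exactly the standard deduction from the ingredients already assembled in the exposition: Kato's divisibility (Theorem~\ref{thm:kato-divisibility}), the structure theory of torsion $\Lambda$-modules, Pontryagin duality, and Mazur's control theorem (Proposition~\ref{prop:control-thoerem}). Your handling of the large-image hypothesis is also right: the divisibility in $\Lambda\otimes\mathbb{Q}_p$ suffices because $\mathrm{ord}_T$ is read off from the distinguished-polynomial part and is unaffected by $p$-power units, and the $\Lambda$-torsionness of $\mathrm{Sel}(\mathbb{Q}_\infty,E[p^\infty])^\vee$ holds without the image assumption as the paper notes.

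One small point worth making explicit in Step~2: you pass from $X$ to the elementary module $\bigoplus_i \Lambda/(f_i^{e_i})$ via pseudo-isomorphism, but pseudo-isomorphism need not preserve $(-)_\Gamma$ on the nose. It does, however, preserve $\mathrm{rank}_{\mathbb{Z}_p}(-)_\Gamma$, since the finite kernel and cokernel contribute only finite pieces after tensoring with $\Lambda/T$; a one-line Tor argument makes this precise. With that clarification the chain of inequalities is airtight.
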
 
In fact, the formulation and the result both extend to the finite slope case, but the exceptional zero occurs for the multiplicative reduction case.
Unlike the results on the classical Birch and Swinnerton-Dyer conjecture, this result does not have any low rank restriction.

\subsection{From Kato's zeta elements to modular elements: {\'{e}}tale construction I}
In this subsection, we study the dotted arrow in the diagram below
\[
\xymatrix{
\textrm{Kato's zeta elements} \ar[d]_-{\mathrm{exp}^*} \ar@{-->}[rd] \ar[r]^-{\mathrm{Col}} &  \textrm{$p$-adic $L$-functions} \\
\textrm{modular symbols/$L$-values} \ar[r]_-{\textrm{Stickelberger}} & \textrm{modular elements} \ar[u]_-{\textrm{$p$-stabilization}+\varprojlim}  \\
}
\]
following the work of Kurihara \cite{kurihara-invent}.
\subsubsection{}
We discuss a little bit of $p$-adic Hodge theory.
Fix a compatible system $(\zeta_{p^n})_n \in \mathbb{Z}_p(1)$ of primitive $p$-power roots of unity.
Let $D = \mathbf{D}_{\mathrm{dR}}(V) = (V \otimes \mathbf{B}_{\mathrm{dR}})^{G_{\mathbb{Q}_p}}$ be the filtered $\varphi$-module of rank two associated to $V$
and
$D^0  = \mathbf{D}^0_{\mathrm{dR}}(V) = \mathbb{Q}_p \cdot \omega_E \subseteq D$ the $\varphi$-stable submodule of rank one
where $\mathbf{B}_{\mathrm{dR}}$ is the de Rham period ring and $\omega_E$ is the invariant N\'{e}ron differential of $E$.
Then $D^0$ is the cotangent space of $E$ (at the identity) and $D/D^0$ is the tangent space of $E$.
Furthermore, the Bloch--Kato exponential map
$$\mathrm{exp} : D \otimes \mathbb{Q}_p(\zeta_{p^{n+1}}) \to   D/D^0 \otimes \mathbb{Q}_p(\zeta_{p^{n+1}}) \to \mathrm{H}^1( \mathbb{Q}_p(\zeta_{p^{n+1}}), V)$$
extends the composition of the Kummer map and the formal Lie group exponential map
$$\widehat{\mathbb{G}}_a(  \mathbb{Q}_p(\zeta_{p^{n+1}}) ) \otimes \mathbb{Q}_p
\to 
\widehat{E}(  \mathbb{Q}_p(\zeta_{p^{n+1}}) ) \otimes \mathbb{Q}_p
\to
\mathrm{H}^1( \mathbb{Q}_p(\zeta_{p^{n+1}}), V)$$
with identification
$\widehat{\mathbb{G}}_a(  \mathbb{Q}_p(\zeta_{p^{n+1}}) ) \otimes \mathbb{Q}_p = \mathbb{Q}_p(\zeta_{p^{n+1}}) \simeq D/D^0 \otimes \mathbb{Q}_p(\zeta_{p^{n+1}}) = \mathbb{Q}_p(\zeta_{p^{n+1}})\omega^*_E$
as in \S\ref{subsubsec:local-galois-cohomology}.
\subsubsection{}
We follow \cite[\S3]{kurihara-invent} and \cite[\S1]{kurihara-pollack} but with slight modifications.
Consider an element
\begin{align*}
\gamma_n(\varphi^{n+2}(\omega_E)) & = \sum^n_{i = 0} \varphi^{i+1}(\omega_E) \otimes \zeta_{p^{n+1-i}} + (1-\varphi)^{-1} \varphi^{n+2}(\omega_E) \\
& \in D \otimes \mathbb{Q}_p(\zeta_{p^{n+1}})
\end{align*}
and a homomorphism
$$P_n : \mathrm{H}^1(\mathbb{Q}_p(\zeta_{p^{n+1}}), T) \to \mathbb{Q}_p[G] = \mathbb{Q}_p[\mathrm{Gal}(\mathbb{Q}_p(\zeta_{p^{n+1}})/\mathbb{Q}_p)] \simeq \mathbb{Q}_p[\mathrm{Gal}(\mathbb{Q}(\zeta_{p^{n+1}})/\mathbb{Q})]$$
sending
$a$ to
$$\dfrac{1}{[\varphi(\omega_E), \omega_E]_{\mathrm{dR}}} \cdot \sum_{\sigma \in G} \langle \mathrm{exp}(\gamma_n(\varphi^{n+2}(\omega_E))^\sigma), a \rangle_p \cdot \sigma$$
where 
\[
\xymatrix{
\gamma_n(\varphi^{n+2}(\omega_E)) \in D \otimes \mathbb{Q}_p(\zeta_{p^{n+1}}) \ar[d]^-{\mathrm{exp}} & \times & D \otimes \mathbb{Q}_p(\zeta_{p^{n+1}})  \ar[r]^-{[-,-]_{\mathrm{dR}}} & \mathbb{Q}_p(\zeta_{p^{n+1}}) \ar[d]^-{\mathrm{Tr}}\\
\mathrm{exp}(\gamma_n(\varphi^{n+2}(\omega_E))) \in \mathrm{H}^1(\mathbb{Q}_p(\zeta_{p^{n+1}}), V)   & \times & a \in \mathrm{H}^1(\mathbb{Q}_p(\zeta_{p^{n+1}}), V) \ar[u]_-{\mathrm{exp}^*} \ar[r]^-{\langle -, - \rangle_p}  & \mathbb{Q}_p .
}
\]
\begin{prop}
If $E[p]$ is irreducible, then
$$\mathrm{Im}(P_n) \subseteq \mathbb{Z}_p[\mathrm{Gal}(\mathbb{Q}(\zeta_{p^{n+1}})/\mathbb{Q})] .$$
\end{prop}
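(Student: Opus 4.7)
The plan is to reinterpret $P_n$ as the level-$n$ incarnation of the ordinary Coleman map associated with the $\omega_E$-filtration of $V$, and then to deduce integrality from the irreducibility of $E[p]$, which rigidifies the integral structure on the Galois-stable lattice $T$ and on the Dieudonn\'e module $D$. Throughout write $K_n := \mathbb{Q}_p(\zeta_{p^{n+1}})$ and $G_n := \mathrm{Gal}(K_n/\mathbb{Q}_p) \simeq \mathrm{Gal}(\mathbb{Q}(\zeta_{p^{n+1}})/\mathbb{Q})$.

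First I would invoke the standard adjointness between Bloch--Kato's exponential and its dual under the local Tate pairing: for every $a \in \mathrm{H}^1(K_n, T)$ and every $\sigma \in G_n$,
\[
\langle \mathrm{exp}(\gamma_n(\varphi^{n+2}(\omega_E))^{\sigma}),\, a\rangle_{p}
\;=\;
\mathrm{Tr}_{K_n/\mathbb{Q}_p}\!\bigl([\gamma_n(\varphi^{n+2}(\omega_E))^{\sigma},\, \mathrm{exp}^*(a)]_{\mathrm{dR}}\bigr).
\]
Since $\mathrm{exp}^*(a)\in D^0\otimes K_n = K_n\cdot \omega_E$ and $D^0$ is isotropic for $[-,-]_{\mathrm{dR}}$ (the Hodge filtration in dimension two is self-orthogonal under the Poincar\'e pairing), only the $\varphi^{i+1}(\omega_E)\otimes\zeta_{p^{n+1-i}}^\sigma$-components of $\gamma_n$ contribute. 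An elementary calculation in the ordinary $\varphi$-decomposition of $D$ shows that each $[\varphi^{i+1}(\omega_E),\omega_E]_{\mathrm{dR}}$ is a $\mathbb{Z}_p$-integral multiple of $[\varphi(\omega_E),\omega_E]_{\mathrm{dR}}$, so that the normalizing scalar $1/[\varphi(\omega_E),\omega_E]_{\mathrm{dR}}$ cancels the denominators cleanly.

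Second I would match the resulting formula with the level-$n$ reduction of the Coleman map $\mathrm{Col}:\mathrm{H}^1_{\mathrm{Iw}}(\mathbb{Q}_p,T)\to \Lambda$ sketched in \S\ref{subsubsec:iwasawa-cohomology}. The finite part of $\gamma_n$ encodes the evaluation at primitive $p^{n+1}$-th roots of unity of the Coleman power series attached to $(a,\omega_E)$, while the tail $(1-\varphi)^{-1}\varphi^{n+2}(\omega_E)$ is exactly the correction required for norm-compatibility down the cyclotomic tower; this matching is the content of \S3 of \cite{kurihara-invent}. The proposition is thereby reduced to the integrality of the Coleman map at finite level.

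For the last step I would use that irreducibility of $E[p]$ forces $T$ to be, up to $\mathbb{Z}_p^{\times}$, the unique $G_{\mathbb{Q}_p}$-stable $\mathbb{Z}_p$-lattice in $V$. This rigidity allows one to choose a $\varphi$-stable $\mathbb{Z}_p$-lattice $D_{\mathbb{Z}_p}\subset D$ with $\omega_E \in D_{\mathbb{Z}_p}$ generating $D_{\mathbb{Z}_p}\cap D^0$ and compatible with $T$ under the crystalline comparison, so that $\mathrm{exp}^*$ sends $\mathrm{H}^1(K_n, T)$ into $\mathcal{O}_{K_n}\cdot \omega_E$. Combined with $\mathrm{Tr}_{K_n/\mathbb{Q}_p}(\mathcal{O}_{K_n}) \subseteq \mathbb{Z}_p$ and the $p$-integrality of the $\zeta_{p^{n+1-i}}$, the Galois average then recombines into an element of $\mathbb{Z}_p[G_n]$. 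The main obstacle I expect is precisely this lattice-matching: a priori $\mathrm{exp}^*(\mathrm{H}^1(K_n,T))$ sits in $D^0\otimes K_n$ only up to a Manin-constant-style denominator, and one must verify that irreducibility of $E[p]$---presumably combined with the $p$-integrality of the Manin constant as in Remark \ref{rem:integrality}---forces this denominator to be a $p$-adic unit, since any nontrivial $G_{\mathbb{Q}_p}$-stable sublattice between $T$ and $pT$ would violate the irreducibility of $E[p] = T/pT$.
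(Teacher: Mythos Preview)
The paper does not give a self-contained proof here; it simply cites \cite[Prop.~3.6]{kurihara-invent} (noting a $p^n$-normalisation difference in $\gamma_n$), and that reference is in the \emph{supersingular} setting. Your sketch, by contrast, repeatedly invokes ``the ordinary Coleman map'' and ``the ordinary $\varphi$-decomposition of $D$''; this is already a mismatch with the source being cited.

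More seriously, your Step~3 contains a genuine gap. You assert that irreducibility of $E[p]$ forces $T$ to be, up to $\mathbb{Z}_p^\times$, the \emph{unique} $G_{\mathbb{Q}_p}$-stable lattice in $V$. But the hypothesis is \emph{global} irreducibility of $E[p]$ as a $G_{\mathbb{Q}}$-module, and this does not imply irreducibility over $G_{\mathbb{Q}_p}$. Indeed, in the ordinary case you are working in, $V\vert_{G_{\mathbb{Q}_p}}$ is reducible (this is exactly the filtration $0\to T^0\to T\to T^1\to 0$ used later in \S\ref{subsec:global-duality}), so there are infinitely many homothety classes of $G_{\mathbb{Q}_p}$-stable lattices and your rigidity argument collapses. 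Since $P_n$ is a purely local map on $\mathrm{H}^1(\mathbb{Q}_p(\zeta_{p^{n+1}}),T)$, even genuine $G_{\mathbb{Q}}$-lattice uniqueness would not by itself pin down the local integral structure you need. Your appeal to the Manin constant is also a red herring: that constant governs the comparison between $\omega_E$ and the pullback of a modular form under the global modular parametrisation, whereas what is at stake here is the integrality of a local pairing.

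Your Step~1 is fine (the adjointness formula and the Cayley--Hamilton reduction $\varphi^{i+1}(\omega_E)=A_i\varphi(\omega_E)+B_i\omega_E$ with $A_i,B_i\in\mathbb{Z}_p$ do give $[\varphi^{i+1}(\omega_E),\omega_E]_{\mathrm{dR}}=A_i[\varphi(\omega_E),\omega_E]_{\mathrm{dR}}$), but this does not yet control the tail $(1-\varphi)^{-1}\varphi^{n+2}(\omega_E)$, whose denominator is $\det(1-\varphi)=1-a_p+p$, nor does it establish that $\mathrm{exp}^*$ carries $\mathrm{H}^1(K_n,T)$ into $\mathcal{O}_{K_n}\cdot\omega_E$. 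Kurihara's actual argument proceeds via integral $p$-adic Hodge theory for the crystalline lattice $T$ (controlling the integral image of $\mathrm{exp}$ and $\mathrm{exp}^*$ directly through the associated strongly divisible module), not via a lattice-uniqueness principle; you should consult \cite[\S3]{kurihara-invent} for the precise mechanism.
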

\begin{proof}
See \cite[Prop. 3.6]{kurihara-invent}. 
Note that our definition of $\gamma_n$ is Kurihara's $\gamma_n$ multiplied by $p^n$.
\end{proof}
\begin{thm}[Kurihara]
We have explicit formula
$$P_n(z_{\mathbb{Q}(\zeta_{p^{n+1}})}) = \theta_{\mathbb{Q}(\zeta_{p^{n+1}})}.$$
If $E[p]$ is irreducible, then $\theta_{\mathbb{Q}_n}$ is the image of $P_n(z_{\mathbb{Q}(\zeta_{p^{n+1}})})$ in 
$\mathbb{Z}_p[\mathrm{Gal}(\mathbb{Q}_n/\mathbb{Q})]$.
\end{thm}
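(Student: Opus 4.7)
The plan is to unpack $P_n$ via the adjunction between the Bloch--Kato exponential and dual exponential, reduce to computing $\mathrm{exp}^* \circ \mathrm{loc}_p$ of the zeta element, and then invoke Kato's explicit reciprocity law to identify the output with the modular symbol coefficients $[a/p^{n+1}]_f$. Concretely, for $x \in D \otimes \mathbb{Q}_p(\zeta_{p^{n+1}})$ and $a \in \mathrm{H}^1(\mathbb{Q}_p(\zeta_{p^{n+1}}), V)$, the adjunction identity
$$\langle \mathrm{exp}(x), a \rangle_p \;=\; \mathrm{Tr}_{\mathbb{Q}_p(\zeta_{p^{n+1}})/\mathbb{Q}_p}\bigl[x,\, \mathrm{exp}^*(a)\bigr]_{\mathrm{dR}}$$
turns $P_n(z_{\mathbb{Q}(\zeta_{p^{n+1}})})$ into a sum over $\sigma \in G$ of traces of de Rham pairings between Galois conjugates of $\gamma_n(\varphi^{n+2}(\omega_E))$ and $\mathrm{exp}^*(\mathrm{loc}_p\, z_{\mathbb{Q}(\zeta_{p^{n+1}})})$. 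Kato's zeta-value formula over $\mathbb{Q}(\zeta_{p^{n+1}})$ (the cyclotomic generalization of Theorem \ref{thm:kato-finiteness-mordell-weil}, cf.~\cite[Thm.~9.6]{kato-euler-systems}) then yields $\mathrm{exp}^*(\mathrm{loc}_p\, z_{\mathbb{Q}(\zeta_{p^{n+1}})}) = u_n \cdot \omega_E$, where $u_n \in \mathbb{Q}_p(\zeta_{p^{n+1}})$ has the property that each character $\chi$ of $\mathrm{Gal}(\mathbb{Q}(\zeta_{p^{n+1}})/\mathbb{Q})$ sends it, up to a Gauss sum, to the twisted $L$-value $L(E, \overline{\chi}, 1)/\Omega_E^{\chi(-1)}$ underlying the modular symbol Fourier expansion from Subsection \ref{subsec:betti-construction}.

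Next, I would evaluate
$$\sum_{\sigma \in G} \mathrm{Tr}\bigl[\gamma_n(\varphi^{n+2}(\omega_E))^\sigma,\, u_n \omega_E\bigr]_{\mathrm{dR}} \cdot \sigma$$
by expanding $\gamma_n$ and using the $\varphi$-module structure of $D$. For $\sigma = \sigma_a$, the Galois action $\zeta_{p^{n+1-i}} \mapsto \zeta_{p^{n+1-i}}^a$ together with the pairings $[\varphi^{i+1}(\omega_E), \omega_E]_{\mathrm{dR}}$ organizes the double sum as a finite Fourier inversion of $u_n$ with roots-of-unity weights, and by the interpolation property of $u_n$ this inversion returns $\sum_a [a/p^{n+1}]_f \cdot \sigma_a = \theta_{\mathbb{Q}(\zeta_{p^{n+1}})}$ after dividing by the normalizing scalar $[\varphi(\omega_E), \omega_E]_{\mathrm{dR}}$. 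The Coleman-type correction $(1-\varphi)^{-1}\varphi^{n+2}(\omega_E)$ is designed precisely to absorb the trivial-character contribution so that at finite level one recovers $\theta_{\mathbb{Q}(\zeta_{p^{n+1}})}$ itself rather than its $\alpha$-stabilization. For the second assertion, the integrality proposition preceding the theorem places $\mathrm{Im}(P_n)$ inside $\mathbb{Z}_p[\mathrm{Gal}(\mathbb{Q}(\zeta_{p^{n+1}})/\mathbb{Q})]$ when $E[p]$ is irreducible, and the natural projection down to $\mathbb{Z}_p[\mathrm{Gal}(\mathbb{Q}_n/\mathbb{Q})]$ carries $\theta_{\mathbb{Q}(\zeta_{p^{n+1}})}$ to $\theta_{\mathbb{Q}_n}$ by the distribution law in Proposition \ref{prop:compatiblity-modular-elements}.

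The hard part will be the explicit de Rham/Fourier matching in the middle step: one must verify that the Frobenius-twisted tensor sum $\sum_{i=0}^{n} \varphi^{i+1}(\omega_E) \otimes \zeta_{p^{n+1-i}}$ recovers the Gauss sums attached to characters of every conductor $p^{k}$ with $1 \leq k \leq n+1$, while the correction term handles the conductor-$1$ case. Tracking the normalizations — in particular that the present $\gamma_n$ differs from Kurihara's by a factor of $p^n$, and that one is comparing to the non-$p$-stabilized $\theta$ at finite level rather than to $L_p(E)$ — is where Kato's explicit reciprocity law on the Rankin--Selberg side, together with character orthogonality, must be inserted carefully. Everything else is bookkeeping.
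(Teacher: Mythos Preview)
The paper does not give a proof here; it simply refers the reader to \cite[Lem.~7.2]{kurihara-invent}. Your sketch is precisely the shape of Kurihara's argument: one passes from the Tate pairing to the de Rham pairing via the $\mathrm{exp}/\mathrm{exp}^*$ adjunction, inserts Kato's computation of $\mathrm{exp}^*(\mathrm{loc}_p\, z_{\mathbb{Q}(\zeta_{p^{n+1}})})$, and then does a character-by-character verification using Gauss sums so that the $\varphi$-twisted sum $\gamma_n(\varphi^{n+2}(\omega_E))$ produces the correct interpolation values at every character of $G$, with the $(1-\varphi)^{-1}$ tail handling the unramified character. One small correction on the second assertion: the passage from $\theta_{\mathbb{Q}(\zeta_{p^{n+1}})}$ to $\theta_{\mathbb{Q}_n}$ is not the distribution relation of Proposition~\ref{prop:compatiblity-modular-elements} (which compares levels $p^{n+1}$ and $p^n$) but rather the projection along the torsion part $(\mathbb{Z}/p\mathbb{Z})^\times$ of $\mathrm{Gal}(\mathbb{Q}(\zeta_{p^{n+1}})/\mathbb{Q})$; once integrality is in hand this projection is tautological.
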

\begin{proof}
See \cite[Lem. 7.2]{kurihara-invent}.
\end{proof}
The above theorem should be considered as the \emph{\'{e}tale} construction of Mazur--Tate's modular elements.
\subsection{Coleman maps: {\'{e}}tale construction II} \label{subsec:coleman-maps}
Various constructions of $p$-adic $L$-functions have similarities with the realizations of motives\footnote{I first learned this picture from Shanwen Wang's talk at Sendai International Conference on Arithmetic Geometry in 2016.}
\[
\xymatrix{
& \textrm{Kato's zeta elements (\'{e}tale)} \ar[rd]^-{\textrm{Coleman maps}} \\
\textrm{elliptic curves} \ar[ru] \ar[r] \ar[rd] & \textrm{modular symbols (Betti)} \ar[r]_-{\textrm{$\S$\ref{subsec:betti-construction}}} & \textrm{$p$-adic $L$-functions} \\
& \textrm{Rankin--Selberg method (de Rham)} \ar[ru]
}
\]
and the Coleman maps play the central role in the \'{e}tale construction.
The Rankin--Selberg method construction of $p$-adic $L$-functions can be found in the work of Hida, Panchishkin, and T. Fukaya \cite{hida-invent-1981,panchishkin-2003,fukaya-coleman-power-series}.

Let $\mathrm{H}^1_{\mathrm{Iw}}(\mathbb{Q}_p, T) = \varprojlim_n \mathrm{H}^1(\mathbb{Q}_{n,p}, T)$ be the local Iwasawa cohomology of $T$ where $\mathbb{Q}_{n,p}$ is the $p$-adic completion of $\mathbb{Q}_{n}$.
\subsubsection{The ordinary case}
Assume that $E$ has good ordinary reduction at $p$.
\begin{thm}
There exists the Coleman map
$$\mathrm{Col} : \mathrm{H}^1_{\mathrm{Iw}}(\mathbb{Q}_p, T) \to \Lambda$$
satisfying
$\mathrm{Col}(\mathrm{loc}_p ( z_{\mathbb{Q}_{\infty}} )) = L_p(E)$.
\end{thm}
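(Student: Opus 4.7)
The plan is to exploit the good ordinary reduction hypothesis to produce a Galois-equivariant quotient of $T$ whose Iwasawa cohomology is free of rank one over $\Lambda$, and then match Kurihara's finite-level formula (the preceding theorem in the excerpt) to $L_p(E)$ after $p$-stabilization.

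First I would set up the ordinary filtration. Because $E$ has good ordinary reduction at $p$, there is a short exact sequence of $G_{\mathbb{Q}_p}$-modules
\[
0 \to T^+ \to T \to T^- \to 0,
\]
where $T^+$ is the kernel of the reduction-mod-$p$ map (the Tate module of the formal group $\widehat{E}$) and $T^-$ is unramified with geometric Frobenius acting as the unit root $\alpha$ of $X^2 - a_p X + p$. Taking Iwasawa cohomology yields a natural map $\mathrm{H}^1_{\mathrm{Iw}}(\mathbb{Q}_p, T) \to \mathrm{H}^1_{\mathrm{Iw}}(\mathbb{Q}_p, T^-)$; I would take $\mathrm{Col}$ to be this map composed with an explicit identification of the target with $\Lambda$.

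Next I would identify $\mathrm{H}^1_{\mathrm{Iw}}(\mathbb{Q}_p, T^-)$ with $\Lambda$. Since $T^-$ is unramified and free of rank one over $\mathbb{Z}_p$, Shapiro's lemma rewrites $\mathrm{H}^1_{\mathrm{Iw}}(\mathbb{Q}_p, T^-)$ as $\mathrm{H}^1(\mathbb{Q}_{p,\infty}, T^-)$, and the inflation-restriction sequence together with the computation of cohomology of the unramified quotient $\mathrm{Gal}(\mathbb{Q}_{p,\infty}^{\mathrm{unr}}/\mathbb{Q}_{p,\infty})$ (a pro-cyclic procyclic Frobenius) produces a canonical $\Lambda$-module isomorphism with $\Lambda$, once we fix our compatible system $(\zeta_{p^n})$ and a generator of $T^-$. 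Concretely, the isomorphism sends a compatible sequence $\varprojlim_n c_n$ to the element of $\Lambda$ whose specialization at each finite-level character reads off the appropriate twist of the local pairing against a basis of $T^-$.

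Then I would verify the interpolation $\mathrm{Col}(\mathrm{loc}_p(z_{\mathbb{Q}_\infty})) = L_p(E)$. The homomorphism $P_n$ from the previous theorem is, up to renormalization by $[\varphi(\omega_E), \omega_E]_{\mathrm{dR}}$ and powers of $\alpha$, exactly the level-$n$ component of the composition of my Coleman map with the $p$-stabilization process. So I would proceed as follows: evaluate $\mathrm{Col}(\mathrm{loc}_p(z_{\mathbb{Q}_\infty}))$ at each finite-order character $\chi$ of $\mathrm{Gal}(\mathbb{Q}_\infty/\mathbb{Q})$ through the composition ``projection to $T^-$, then pairing with $\mathrm{exp}$ of the dual de Rham basis'', and compare with Kurihara's identity $P_n(z_{\mathbb{Q}(\zeta_{p^{n+1}})}) = \theta_{\mathbb{Q}(\zeta_{p^{n+1}})}$. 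The norm-compatible $p$-stabilization $\theta^\alpha_{\mathbb{Q}(\zeta_{p^n})}$ arises naturally because the Iwasawa cohomology sees only the image in $T^-$, where $\varphi$ acts by $\alpha$ and the Perrin-Riou two-term relation becomes precisely the three-term recursion from Proposition \ref{prop:compatiblity-modular-elements}; passing to the inverse limit matches the construction of $L_p(E)$ from $\varprojlim \theta^\alpha_{\mathbb{Q}(\zeta_{p^n})}$ verbatim.

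The hard part will be the explicit reciprocity law that underlies Kurihara's formula, i.e.\ matching the de Rham-side quantity $\mathrm{exp}^*(\mathrm{loc}_p z_{\mathbb{Q}(\zeta_{p^{n+1}})})$ with twisted $L$-values $L(E, \overline{\chi}, 1)$; the excerpt is permitted to cite this and the Kurihara normalization constants, so what remains is essentially bookkeeping of the two twists $(1-\alpha^{-1})$ at the trivial character, the factors of $\tau(\chi)$ and $\alpha^{-n}$ at non-trivial characters, and compatibility of the cup product with the de Rham pairing used to define $P_n$. The integrality of $\mathrm{Col}$ (landing in $\Lambda$ rather than $\Lambda \otimes \mathbb{Q}_p$) would be a secondary obstacle, handled as in Kurihara by the irreducibility of $E[p]$ argument referenced in Remark \ref{rem:integrality}, or alternatively by constructing $\mathrm{Col}$ directly from the unramified $T^-$ before inverting $p$.
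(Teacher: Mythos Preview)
The paper does not actually prove this theorem: its entire proof reads ``See \cite[Appendix]{rubin-es-mec} for the explicit construction.'' Your proposal, by contrast, sketches the standard construction that one finds in that reference: project to the unramified rank-one quotient $T^-$ coming from the ordinary filtration, identify $\mathrm{H}^1_{\mathrm{Iw}}(\mathbb{Q}_p,T^-)$ with $\Lambda$, and verify the interpolation via the finite-level dual exponential computations (here packaged as Kurihara's identity $P_n(z_{\mathbb{Q}(\zeta_{p^{n+1}})})=\theta_{\mathbb{Q}(\zeta_{p^{n+1}})}$). So your outline is not so much a different route as an unpacking of what the paper simply cites.

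Two small technical points are worth tightening. First, the Shapiro-lemma step is misstated: Shapiro identifies $\mathrm{H}^1_{\mathrm{Iw}}(\mathbb{Q}_p,T^-)$ with $\mathrm{H}^1(\mathbb{Q}_p, T^-\otimes\Lambda)$ (suitably twisted), not with $\mathrm{H}^1(\mathbb{Q}_{p,\infty},T^-)$; the freeness of rank one then follows from the unramifiedness and rank one of $T^-$ via a direct computation with the Frobenius, not via inflation--restriction over $\mathbb{Q}_{p,\infty}$. Second, the integrality of $\mathrm{Col}$ itself does not require $E[p]$ irreducible; that hypothesis in Remark~\ref{rem:integrality} concerns the integrality of the modular elements with N\'eron periods, which is a separate issue from the Coleman map landing in $\Lambda$. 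The map $\mathrm{Col}$ is integral by construction once you build it from $T^-$ without inverting $p$.
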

\begin{proof}
See \cite[Appendix]{rubin-es-mec} for the explicit construction.
\end{proof}

\subsubsection{The supersingular case}
Assume that $E$ has good supersingular reduction at $p \geq 5$. See Sprung \cite{sprung-ap-nonzero} for the $p \leq 3$ case.
\begin{thm}[Kobayashi]
There exists the signed Coleman maps
$$\mathrm{Col}^{\pm} : \mathrm{H}^1_{\mathrm{Iw}}(\mathbb{Q}_p, T) \to \Lambda$$
satisfying
$\mathrm{Col}^{\pm}(\mathrm{loc}_p ( z_{\mathbb{Q}_{\infty}} ) ) = L^{\pm}_p(E)$
\end{thm}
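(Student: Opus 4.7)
My proof plan is to follow Kobayashi's strategy, exploiting the vanishing $a_p=0$ (which is automatic for $p\geq 5$ with $E$ supersingular at $p$, by Hasse and $p \mid a_p$) in order to refine Perrin--Riou's big logarithm into a pair of $\Lambda$-valued maps. First I would recall Perrin--Riou's big logarithm
\[
\mathcal{L}_V : \mathrm{H}^1_{\mathrm{Iw}}(\mathbb{Q}_p, T) \to \mathcal{H}_\infty \otimes_{\mathbb{Q}_p} \mathbf{D}_{\mathrm{cris}}(V),
\]
where $\mathcal{H}_\infty$ is the algebra of tempered distributions on $\mathbb{Z}_p^\times$, strictly larger than $\Lambda \otimes_{\mathbb{Z}_p} \mathbb{Q}_p$. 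In the ordinary case the unit-root line in $\mathbf{D}_{\mathrm{cris}}(V)$ is a Frobenius-stable filtration that lets a single scalar projection land in $\Lambda$; in the supersingular case the Frobenius acts on $\mathbf{D}_{\mathrm{cris}}(V)$ with eigenvalues $\alpha, \beta$ of valuation $1/2$, so no such Frobenius-stable line exists and a naive projection only produces distributions of logarithmic growth $1/2$.

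The second step is to bypass this obstruction using Kobayashi's half-logarithms
\[
\log^{+}_{p,n}(T) = \frac{1}{p} \prod_{m=1}^{n} \frac{\Phi_{p^{2m}}(1+T)}{p}, \qquad \log^{-}_{p,n}(T) = \frac{1}{p} \prod_{m=0}^{n-1} \frac{\Phi_{p^{2m+1}}(1+T)}{p},
\]
whose product matches, up to units, the formal group logarithm of $E$; this identification is made via Honda's theorem applied to the $p$-typical formal group, using that the relation $\ell_E(T) - p^{-1}\varphi^2(\ell_E)(T) \in \mathbb{Z}_p[[T]]$ simplifies exactly when $a_p=0$. I would then define the plus/minus norm subgroups $E^{\pm}(\mathbb{Q}_{n,p}) \subseteq E(\mathbb{Q}_{n,p})$ by imposing trace compatibility at intermediate layers of the opposite parity, verifying the decomposition $E(\mathbb{Q}_{n,p}) \otimes \mathbb{Z}_p = E^{+}(\mathbb{Q}_{n,p}) \otimes \mathbb{Z}_p + E^{-}(\mathbb{Q}_{n,p}) \otimes \mathbb{Z}_p$ as well as the vanishing of $\log^{\mp}_{p,n}$ on $E^{\pm}(\mathbb{Q}_{n,p})$. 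Taking $\varprojlim_n$ and pairing via local Tate duality against the Kummer image of an element of $E^{\mp}(\mathbb{Q}_{n,p})$ then manufactures two $\Lambda$-valued homomorphisms $\mathrm{Col}^{\pm}$.

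The third step is to identify $\mathrm{Col}^{\pm}(\mathrm{loc}_p\, z_{\mathbb{Q}_\infty})$ with Pollack's $L_p^{\pm}(E)$. For this I would invoke Kato's explicit reciprocity law, which computes the pairing of $\mathcal{L}_V(\mathrm{loc}_p\, z_{\mathbb{Q}_\infty})$ against elements of $\mathbf{D}_{\mathrm{cris}}(V)$ in terms of twisted $L$-values, and then compare the resulting interpolation property of $\mathrm{Col}^{\pm}(\mathrm{loc}_p\, z_{\mathbb{Q}_\infty})$ at every finite-order character $\chi$ of $\Gamma$ with the defining interpolation of $L_p^{\pm}(E)$; they are characterised by the same formula up to a Gauss sum and a power of $\alpha$ or $\beta$, so they must coincide once both are known to lie in $\Lambda$.

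The main obstacle I expect is the integrality assertion that $\mathrm{Col}^{\pm}$ actually lands in $\Lambda$ rather than in the distribution ring $\mathcal{H}_\infty$. This requires a Weierstrass-preparation-type analysis of the half-logarithms $\log^{\pm}_{p,n}$ showing that their growth is precisely of order $1/2$ and exactly complementary to that of $\mathcal{L}_V$, so that the quotient is bounded. This is where the hypothesis $a_p = 0$ enters decisively, and is the technical heart of Kobayashi's paper; for $p \in \{2,3\}$, where $a_p \neq 0$ is possible at supersingular primes, one must replace scalar half-logarithms by Sprung's logarithmic matrix, which is exactly the complication flagged by the excerpt's reference to \cite{sprung-ap-nonzero}.
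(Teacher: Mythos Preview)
Your proposal is a faithful and well-organized sketch of Kobayashi's original argument, but you should be aware that the paper itself does not prove this theorem at all: its entire ``proof'' is the single line ``See \cite{kobayashi-thesis}.'' So there is no approach to compare against --- the paper deliberately defers the content to Kobayashi's paper, and what you have written is essentially a summary of that reference. In that sense your write-up is correct and goes well beyond what the paper provides.

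One minor remark on your sketch: Kobayashi's original construction of $\mathrm{Col}^{\pm}$ is somewhat more concrete than the Perrin--Riou big logarithm framing you lead with. He builds the maps directly from the formal-group exponential and an explicit system of local points $c_n \in \widehat{E}(\mathbb{Q}_{n,p})$ (the trace-compatible generators of $E^{\pm}$), pairing against these via local Tate duality, rather than first passing through $\mathcal{H}_\infty \otimes \mathbf{D}_{\mathrm{cris}}(V)$ and then dividing by half-logarithms. The two descriptions are equivalent (and the Perrin--Riou viewpoint is the one that generalizes, e.g.\ to higher weight via Lei--Loeffler--Zerbes), but if you are asked to reproduce Kobayashi's proof specifically, the local-points construction is closer to the source and makes the integrality of $\mathrm{Col}^{\pm}$ more transparent than the growth-cancellation argument you flag as the main obstacle.
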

\begin{proof}
See \cite{kobayashi-thesis}.
\end{proof}

\subsubsection{Remarks on the construction}
By using the formal logarithm, we have the map
$$\widehat{E}(  \mathbb{Q}_p(\zeta_{p^{n+1}}) ) \otimes \mathbb{Q}_p
 \to 
\widehat{\mathbb{G}}_a(  \mathbb{Q}_p(\zeta_{p^{n+1}}) ) \otimes \mathbb{Q}_p
\to 
D/D^0 \otimes \mathbb{Q}_p(\zeta_{p^{n+1}}).$$
In particular, 
$\gamma_n(\varphi^{n+2}(\omega_E)) \pmod{D^0}$ can be realized as local points of elliptic curves (or modular abelian varieties).
In order to construct Coleman maps for such cases, it is natural to study local points in the formal groups.
For the case of higher weight modular forms, such a local point description is not available, so we can only use $p$-adic Hodge theory following Perrin-Riou's strategy \cite{perrin-riou-local-iwasawa}. See also \cite[Thm. 16.4]{kato-euler-systems}. Kurihara's construction of modular elements also generalizes to modular forms of higher weight \cite{kim-kato,epw2}.

\subsection{The global duality argument and Kato's main conjecture} \label{subsec:global-duality}
The global Poitou--Tate duality yields the exact sequence
\[
\xymatrix{
\mathrm{H}^1(\mathbb{Q}, T) \ar[r]^-{\mathrm{loc}^s_p} & \mathrm{H}^1_{/f}(\mathbb{Q}_p, T) \ar[d]^-{\mathrm{exp}^*} \ar[r] & \mathrm{Sel}(\mathbb{Q}, E[p^\infty])^\vee \ar[r] & \mathrm{Sel}_0(\mathbb{Q}, E[p^\infty])^\vee \\
 & \mathrm{H}^0(E/\mathbb{Q}_p, \Omega^1)
}
\]
and Kato's zeta element 
$z_{\mathbb{Q}} \in \mathrm{H}^1(\mathbb{Q}, T)$
maps to
$$\mathrm{exp}^* \circ \mathrm{loc}^s_p ( z_{\mathbb{Q}} ) = \dfrac{L^{(p)}(E,1)}{\Omega^+_E} \cdot \omega_E \in \mathbb{Q}_p\omega_E = D^0 = \mathrm{H}^0(E/\mathbb{Q}_p, \Omega^1) .$$
Here, $\mathrm{Sel}_0$ means the $p$-strict Selmer group.
We interpolate this picture over the cyclotomic tower.

\subsubsection{Kato's main conjecture}
Recall that $$z_{\mathbb{Q}_{\infty}} = \varprojlim_{n} z_{\mathbb{Q}_{n}} \in \mathrm{H}^1_{\mathrm{Iw}}(\mathbb{Q}, T) = \varprojlim_{n}
\mathrm{H}^1(\mathbb{Q}_n, T)$$
is Beilinson--Kato's zeta element over $\mathbb{Q}_{\infty}$.
The formulation of Kato's main conjecture \cite[Conj. 12.10]{kato-euler-systems} and the one-sided divisibility result \cite[Thm. 12.5.(4)]{kato-euler-systems}  are insensitive to the reduction type. 
\begin{thm}[Kato] \label{thm:kato-divisibility-sel0}
Assume that the image of $\rho$ contains a conjugate of $\mathrm{SL}_2(\mathbb{Z}_p)$.
\begin{enumerate}
\item $\mathrm{Sel}_0(\mathbb{Q}_{\infty}, E[p^\infty])$ is $\Lambda$-cotorsion.
\item $\mathrm{char}_{\Lambda} \dfrac{\mathrm{H}^1_{\mathrm{Iw}}(\mathbb{Q}, T)}{\Lambda z_{\mathbb{Q}_{\infty}} } \subseteq \mathrm{char}_{\Lambda} \mathrm{Sel}_0(\mathbb{Q}_\infty, E[p^\infty])^\vee$.
\end{enumerate}
\end{thm}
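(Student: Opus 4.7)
The plan is to apply the Euler--Kolyvagin system machinery of Kolyvagin and Rubin \cite{rubin-book} to the Beilinson--Kato Euler system $\{z_{\mathbb{Q}(\zeta_{mp^n})}\}_{m,n}$, working throughout in Iwasawa cohomology so that the final bound is on $\mathrm{char}_{\Lambda}\mathrm{Sel}_0(\mathbb{Q}_\infty, E[p^\infty])^\vee$. The large image hypothesis on $\rho$ is what makes the Chebotarev density arguments inside that machinery go through. The role of $z_{\mathbb{Q}_\infty}$ will be to parametrize the Euler system, and the role of an explicit reciprocity law (via the Coleman / Perrin-Riou map of \S\ref{subsec:coleman-maps}) will be to certify that $z_{\mathbb{Q}_\infty}$ is not $\Lambda$-torsion.

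First I would verify that $z_{\mathbb{Q}_\infty}$ is non-torsion in $\mathrm{H}^1_{\mathrm{Iw}}(\mathbb{Q}, T)$. Applying the Coleman map $\mathrm{Col}$ (or the signed variants $\mathrm{Col}^{\pm}$ in the supersingular case) to $\mathrm{loc}_p(z_{\mathbb{Q}_\infty})$ yields $L_p(E)$ (respectively $L^{\pm}_p(E)$), and this is nonzero by Rohrlich's theorem \ref{thm:rohrlich-non-vanishing}. Since $\mathrm{H}^1_{\mathrm{Iw}}(\mathbb{Q}, T)$ has $\Lambda$-rank one (by the global Euler--Poincar\'{e} formula combined with the vanishing of $\mathrm{H}^0(\mathbb{Q}_\infty, T)$, which uses the large image hypothesis), the quotient $\mathrm{H}^1_{\mathrm{Iw}}(\mathbb{Q}, T)/\Lambda z_{\mathbb{Q}_\infty}$ is a torsion $\Lambda$-module, so its characteristic ideal is a well-defined nonzero ideal of $\Lambda$. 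Statement (1) will then drop out as a byproduct of the divisibility (2).

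The heart of the proof is the derivative construction. From the Euler system relations satisfied by $\{z_{\mathbb{Q}(\zeta_{mp^n})}\}$, one uses Kolyvagin's derivative operator to produce, for each squarefree product $n$ of ``Kolyvagin primes'' $\ell$ (those at which $\mathrm{Fr}_\ell$ acts on $T/p^kT$ with a prescribed characteristic polynomial, guaranteed to exist in positive density by Chebotarev and the large image hypothesis) and each $k \geq 1$, a derived class $\kappa_n \in \mathrm{H}^1(\mathbb{Q}_\infty, T/p^k T)$. The class $\kappa_n$ is unramified outside $n$ and has a controlled transverse singular component at each $\ell \mid n$, dictated by the Euler factor. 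A standard inductive global Poitou--Tate duality argument, incrementally adjoining Kolyvagin primes $\ell$ and pairing the singular classes $\kappa_n$ against elements of the dual Selmer group, then converts these classes into the characteristic ideal bound (2), with the bound controlled by the index of $z_{\mathbb{Q}_\infty}$ in $\mathrm{H}^1_{\mathrm{Iw}}(\mathbb{Q}, T)$.

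The main obstacle is the faithful execution of the derivative construction in the Iwasawa setting: one must produce the derived classes $\kappa_n$ with correct local behavior both at the Kolyvagin primes (where the singular part is pinned down by the Euler system relation) and along the cyclotomic tower (where compatibility with corestriction must be preserved), and one must ensure that the large image hypothesis supplies sufficiently many Kolyvagin primes to separate classes modulo $p^k$ uniformly in $k$. This is the bulk of Kato's work in \S\S 11--13 of \cite{kato-euler-systems}, and it is where the Iwasawa-theoretic packaging --- as opposed to the simpler bottom-of-the-tower Euler system argument used for Theorem \ref{thm:kato-finiteness-mordell-weil-sha} --- requires the most technical care.
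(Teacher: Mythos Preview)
The paper does not give its own proof of this theorem: it is stated as a result of Kato and attributed to \cite[Thm.~12.5.(4)]{kato-euler-systems}, with no accompanying \texttt{proof} environment. The surrounding exposition (Lecture~3) sketches the Euler/Kolyvagin system machinery only in the over-$\mathbb{Q}$ setting (Theorem~\ref{thm:finiteness-rubin} and the discussion after it), not in the Iwasawa-theoretic setting needed here.

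Your outline is essentially the correct one and matches Kato's actual argument: non-torsionness of $z_{\mathbb{Q}_\infty}$ via an explicit reciprocity law together with Rohrlich, rank-one freeness of $\mathrm{H}^1_{\mathrm{Iw}}(\mathbb{Q},T)$, and then the Kolyvagin derivative / global duality induction carried out $\Lambda$-adically. Two small points of care. First, your appeal to the Coleman map for non-torsionness is reduction-type dependent (ordinary or $a_p=0$ supersingular), whereas the theorem as stated is insensitive to reduction type; Kato's own argument for $z_{\mathbb{Q}_\infty}\neq 0$ goes through the dual exponential and his zeta value formula (the content of \cite[\S\S7,~9,~12.5]{kato-euler-systems}), which is what your parenthetical ``Perrin-Riou map'' should really be pointing to. Second, the Euler system machinery in the Iwasawa setting is executed in \cite[\S13]{kato-euler-systems} (especially \S\S13.8--13.14), not \S\S11--13; \S\S11--12 are devoted to stating the main results and setting up the Iwasawa cohomology, while the derivative construction and the induction live in \S13.
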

\begin{conj}[Kato's main conjecture] \label{conj:kato-main-conj-sel0}
Assume that the image of $\rho$ contains a conjugate of $\mathrm{SL}_2(\mathbb{Z}_p)$. Then
$$\mathrm{char}_{\Lambda} \dfrac{\mathrm{H}^1_{\mathrm{Iw}}(\mathbb{Q}, T)}{\Lambda z_{\mathbb{Q}_{\infty}} } = \mathrm{char}_{\Lambda}
\mathrm{Sel}_0(\mathbb{Q}_\infty, E[p^\infty])^\vee.$$
\end{conj}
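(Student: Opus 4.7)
The plan is to derive the equality by combining Kato's divisibility (Theorem \ref{thm:kato-divisibility-sel0}) with the opposite divisibility coming from Skinner--Urban (Theorem \ref{thm:skinner-urban}), the bridge being the Coleman map machinery together with the global Poitou--Tate exact sequence displayed at the start of \S\ref{subsec:global-duality}. The conjecture is an equality of characteristic ideals, and Kato has already supplied one inclusion, so the whole game is to obtain the reverse inclusion by translating the statement into Mazur--Greenberg's formulation (Conjecture \ref{conj:imc-mazur-greenberg}), which is known in large generality.

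First I would set things up over the cyclotomic tower: take the inverse limit of the Poitou--Tate exact sequence and use local duality to identify $\mathrm{H}^1_{\mathrm{Iw}}(\mathbb{Q}_p,T)$ with the Pontryagin dual of the local term appearing in the Selmer condition. This produces a four-term exact sequence of finitely generated $\Lambda$-modules relating $\mathrm{H}^1_{\mathrm{Iw}}(\mathbb{Q},T)$, the local Iwasawa cohomology at $p$, and the duals of $\mathrm{Sel}_0(\mathbb{Q}_\infty,E[p^\infty])$ and $\mathrm{Sel}(\mathbb{Q}_\infty,E[p^\infty])$. Characteristic ideals are multiplicative in such exact sequences (modulo $\Lambda$-torsion, which is controlled here by Theorem \ref{thm:kato-divisibility-sel0}(1) together with Theorem \ref{thm:rohrlich-non-vanishing}), so the characteristic ideal of $\mathrm{H}^1_{\mathrm{Iw}}(\mathbb{Q},T)/\Lambda z_{\mathbb{Q}_\infty}$ factors as the characteristic ideal of $\mathrm{Sel}_0(\mathbb{Q}_\infty,E[p^\infty])^\vee$ times a local correction term.

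Next I would evaluate that local correction via the Coleman map. In the ordinary case, $\mathrm{Col}: \mathrm{H}^1_{\mathrm{Iw}}(\mathbb{Q}_p,T) \to \Lambda$ is a $\Lambda$-module isomorphism up to a controlled error (an explicit finite cokernel coming from the Euler factor at $p$), and it sends $\mathrm{loc}_p(z_{\mathbb{Q}_\infty})$ to $L_p(E)$. Running this through the previous exact sequence transforms Kato's formulation into Mazur--Greenberg's: up to the explicit local factor, the inclusion $\mathrm{char}_\Lambda(\mathrm{H}^1_{\mathrm{Iw}}(\mathbb{Q},T)/\Lambda z_{\mathbb{Q}_\infty}) \supseteq \mathrm{char}_\Lambda\mathrm{Sel}_0(\mathbb{Q}_\infty,E[p^\infty])^\vee$ corresponds to the divisibility $(L_p(E)) \supseteq \mathrm{char}_\Lambda\mathrm{Sel}(\mathbb{Q}_\infty,E[p^\infty])^\vee$. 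This latter divisibility is exactly Theorem \ref{thm:skinner-urban}, so applying it under its hypotheses (irreducibility of $E[p]$ and a ramified prime exactly dividing $N$) yields the missing inclusion and completes the proof in the ordinary case. For the supersingular case, one repeats the argument using Kobayashi's signed Coleman maps $\mathrm{Col}^\pm$ and the signed main conjecture (proved by X.~Wan, with Sprung's refinement for $p\leq 3$), which feeds in the divisibility $(L_p^\pm(E)) \supseteq \mathrm{char}_\Lambda\mathrm{Sel}^\pm(\mathbb{Q}_\infty,E[p^\infty])^\vee$ in place of Skinner--Urban.

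The hard part will be the reverse divisibility itself, i.e. Skinner--Urban and its supersingular analogue, which rest on a completely different technology (Eisenstein congruences on unitary groups) lying outside the Euler system framework of the present paper. The rest of the argument — the Poitou--Tate bookkeeping and the Coleman map translation — is essentially formal once one is careful about the exact finite correction terms at $p$ and about tracking when characteristic ideals are preserved under pseudo-isomorphism. A secondary subtlety is that the cokernel of $\mathrm{Col}$ and the kernel/cokernel of the map $\mathrm{H}^1_{\mathrm{Iw}}(\mathbb{Q}_p,T)/\Lambda\mathrm{loc}_p(z_{\mathbb{Q}_\infty}) \to \Lambda/(L_p(E))$ must cancel precisely against the discrepancy between the $\mathrm{Sel}$ and $\mathrm{Sel}_0$ characteristic ideals; this cancellation is ensured by the compatibility of Bloch--Kato's dual exponential with the Coleman map, but verifying it cleanly is the main technical obstacle under the large image hypothesis.
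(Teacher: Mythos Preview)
The statement you are trying to prove is labeled \emph{Conjecture} in the paper, not Theorem. The paper gives no proof of it; it is stated as an open problem. What the paper does prove is one inclusion (Theorem \ref{thm:kato-divisibility-sel0}(2)) and the \emph{equivalence} of this conjecture with the Mazur--Greenberg formulation in the good ordinary case (via the exact sequence in \S\ref{subsec:global-duality}). So there is no ``paper's proof'' to compare your attempt against.

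Your strategy is the correct and standard one for establishing the conjecture \emph{under additional hypotheses}: use the Coleman-map/Poitou--Tate equivalence to pass to the $p$-adic $L$-function formulation, then invoke Skinner--Urban (ordinary) or Wan/Kobayashi (supersingular) for the reverse divisibility. This does work, and is exactly how the known cases are obtained. However, it does not prove the conjecture as stated. The only hypothesis in Conjecture \ref{conj:kato-main-conj-sel0} is the large-image assumption, with no restriction on the reduction type at $p$. Your argument requires (i) good ordinary or good supersingular reduction, and (ii) the extra hypotheses of Theorem \ref{thm:skinner-urban} (irreducibility of $E[p]$ and a prime $\ell \parallel N$ at which $E[p]$ is ramified) or their supersingular analogues. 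For additive reduction, or for ordinary primes failing Skinner--Urban's ramification condition, the reverse divisibility is not supplied by the inputs you cite, and the conjecture remains open in those cases. So what you have written is a proof of a substantial portion of the conjecture, not of the conjecture itself; you should state precisely which hypotheses you are adding.
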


\subsubsection{The good ordinary case}
When $E$ has good ordinary reduction at $p$, the local Galois representation at $p$ is reducible, so we have exact sequence of $G_{\mathbb{Q}_p}$-modules
\[
\xymatrix{
0 \ar[r] & T^0 \ar[r] &  T \ar[r] &  T^1 \ar[r] & 0 .
}
\]
Then the following exact sequence follows from the global duality argument
\[
\xymatrix{
\dfrac{\mathrm{H}^1_{\mathrm{Iw}}(\mathbb{Q}, T)}{\Lambda z_{\mathbb{Q}_{\infty}} } \ar@{^{(}->}[r]^-{\mathrm{loc}^s_p} & 
\dfrac{\mathrm{H}^1_{\mathrm{Iw}}(\mathbb{Q}_p, T)}{ \langle \mathrm{loc}^s_p z_{\mathbb{Q}_{\infty}} \rangle + \mathrm{H}^1_{\mathrm{Iw}}(\mathbb{Q}_p, T^0)}
 \ar[d]^-{\mathrm{Col}}_-{\simeq} \ar[r] & \mathrm{Sel}(\mathbb{Q}_\infty, E[p^\infty])^\vee \ar@{->>}[r] & \mathrm{Sel}_0(\mathbb{Q}_\infty, E[p^\infty])^\vee \\
 & \Lambda / L_p(E)
}
\]
Therefore, Conjecture \ref{conj:imc-mazur-greenberg} and Conjecture \ref{conj:kato-main-conj-sel0} are equivalent.
See \cite[$\S$17.13]{kato-euler-systems} for the detail.
\subsubsection{The good supersingular case}
When $E$ has good supersingular reduction at p and $a_p(E) = 0$ (e.g. $p >3$), we use Kobayashi--Pollack's $\pm$-Iwasawa theory \cite{kobayashi-thesis, pollack-thesis}.
Then the following exact sequence follows from the global duality argument
\[
\xymatrix{
\dfrac{\mathrm{H}^1_{\mathrm{Iw}}(\mathbb{Q}, T)}{\Lambda z_{\mathbb{Q}_{\infty}} } \ar@{^{(}->}[r]^-{\mathrm{loc}^s_p} & 
\dfrac{\mathrm{H}^1_{\mathrm{Iw}}(\mathbb{Q}_p, T)}{ \langle \mathrm{loc}^s_p z_{\mathbb{Q}_{\infty}} \rangle + \mathrm{ker}(\mathrm{Col}^{\pm})}
 \ar[d]^-{\mathrm{Col}^{\pm}}_-{\simeq} \ar[r] & \mathrm{Sel}^{\pm}(\mathbb{Q}_\infty, E[p^\infty])^\vee \ar@{->>}[r] & \mathrm{Sel}_0(\mathbb{Q}_\infty, E[p^\infty])^\vee \\
 & \Lambda / L^{\pm}_p(E)
}
\]
Therefore, Kobayashi's signed main conjecture and Conjecture \ref{conj:kato-main-conj-sel0} are equivalent.
See \cite[Thm. 7.4]{kobayashi-thesis} for the detail.

\section{Lecture 3: The Euler and Kolyvagin system machinery}
The main theme of the third lecture is to explain how to develop the connection between the size/structure of arithmetically interesting modules (ideal class groups, Selmer groups, Tate--Shafarevich groups, $\cdots$) and the special values of $L$-functions.
See \cite{rubin-book,mazur-rubin-book,loeffler-zerbes-arizona} for the detail.
\subsection{A rough description}
\subsubsection{}
We start with a very informal and imprecise definition of Euler systems. In fact, this shows what we expect to see from Euler systems.
\begin{defn}[informal and imprecise]
An Euler system attached to a $p$-adic representation $V$ is a cohomology class
$$c =c_{\mathbb{Q}} \in \mathrm{H}^1(\mathbb{Q}, V)$$
with properties:
\begin{enumerate}
\item it can be related to $L(V^*(1),0)$ or to $p$-adic $L$-functions (i.e. $L$-values in families)
\item it can be ``deformed" $p$-adically or adelically. 
\end{enumerate}
The former condition is called the \emph{explicit reciprocity law} and the latter one is called the \emph{Euler system norm relation}. 
All the Euler factors appear in the second one.
\end{defn}
Then the Euler system \emph{machinery} yields the following theorem.
\begin{thm}[Euler system machinery]
If there exists a non-zero Euler system and the image of the Galois representation is large enough, then the Selmer group of the dual representation is finite.
\end{thm}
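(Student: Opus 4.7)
The plan is to convert the given Euler system into a Kolyvagin system and then use Poitou--Tate global duality to bound the dual Selmer group. This is the classical Kolyvagin--Rubin machinery, as presented in \cite{rubin-book}.

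First, I would spell out the input more precisely. An Euler system here means a compatible family of classes $c_{\mathbb{Q}(\zeta_m)} \in \mathrm{H}^1(\mathbb{Q}(\zeta_m), T)$, indexed by squarefree integers $m$ built from primes in an appropriate set, satisfying the norm relation $\mathrm{cor}_{\mathbb{Q}(\zeta_{m\ell})/\mathbb{Q}(\zeta_m)}(c_{\mathbb{Q}(\zeta_{m\ell})}) = P_\ell(\mathrm{Fr}_\ell^{-1}) \cdot c_{\mathbb{Q}(\zeta_m)}$, where $P_\ell(x) = \det(1 - \mathrm{Fr}_\ell \cdot x \mid V^*(1))$ for good $\ell$. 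The non-vanishing of the bottom class $c_{\mathbb{Q}}$ is the key non-triviality input.

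Fix $M = p^k$. I would introduce the set of Kolyvagin primes, namely rational primes $\ell$ with $\ell \equiv 1 \pmod M$ and such that $\mathrm{Fr}_\ell$ acts trivially (or suitably) on $T/MT$. For $n = \ell_1 \cdots \ell_r$ a squarefree product of Kolyvagin primes, apply the Kolyvagin derivative operators $D_n = \prod_{\ell \mid n} D_\ell$, where $D_\ell = \sum_{i=0}^{\ell-2} i \sigma_\ell^i \in \mathbb{Z}[\mathrm{Gal}(\mathbb{Q}(\zeta_n)/\mathbb{Q})]$, to the class $c_{\mathbb{Q}(\zeta_n)}$. A standard telescoping computation, invoking the Euler norm relation and the fact that $\mathrm{Fr}_\ell$ is trivial mod $M$, shows that $D_n c_{\mathbb{Q}(\zeta_n)}$ becomes $\mathrm{Gal}(\mathbb{Q}(\zeta_n)/\mathbb{Q})$-invariant mod $M$. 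A cohomological lifting argument (for which the large-image hypothesis is used to force $\mathrm{H}^0(\mathbb{Q}, T/MT) = 0$ and trivialize the obstruction) then descends this to the Kolyvagin derivative class $\kappa_n \in \mathrm{H}^1(\mathbb{Q}, T/MT)$. The essential local property of $\kappa_n$ is that it is unramified at all primes not dividing $n$ and satisfies a \emph{transverse} (ramified) local condition at each $\ell \mid n$.

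Next I would apply global Poitou--Tate duality. For any $s \in \mathrm{Sel}(\mathbb{Q}, V^*(1)[M])$ in the dual Selmer group, the local Tate pairings give $\sum_v \langle \mathrm{loc}_v(\kappa_n), \mathrm{loc}_v(s) \rangle_v = 0$. Outside $n$ and the bad primes, both $\kappa_n$ and $s$ lie in the (self-orthogonal) unramified local condition, so those terms vanish; at bad places one matches local conditions so that those contributions also vanish; and at each $\ell \mid n$, the transverse component of $\kappa_n$ pairs non-trivially with the unramified component of $s$, giving an explicit relation between $\mathrm{loc}_\ell(s)$ and $\kappa_n$ mod $M$.

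The heart of the argument, and the step I expect to be the main obstacle, is the inductive Chebotarev descent that runs this local-to-global relation backwards. Given a non-zero class $s \in \mathrm{Sel}(\mathbb{Q}, V^*(1)[M])$, one uses the large-image hypothesis together with the Chebotarev density theorem, applied to a carefully chosen finite extension that simultaneously resolves $T/MT$, the cohomology class $s$, and any previously chosen $\kappa_n$, to produce a new Kolyvagin prime $\ell$ whose Frobenius detects $s$; then $\kappa_{n\ell}$ has localization at $\ell$ that "kills one step" of $s$. Iterating, and starting from the non-triviality of $\kappa_1 = c_{\mathbb{Q}}$, bounds the length of $\mathrm{Sel}(\mathbb{Q}, V^*(1)[M])$ by the $p$-adic order of $\kappa_1$. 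Finally, passing to the limit $k \to \infty$ yields the desired finiteness of $\mathrm{Sel}(\mathbb{Q}, V^*(1))$. The delicate points are verifying that the image of Galois is large enough that these Chebotarev conditions can be imposed independently at each stage, and checking in a uniform way that the base class remains non-zero through the entire induction.
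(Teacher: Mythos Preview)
Your proposal is correct and follows the same classical Kolyvagin--Rubin machinery that the paper invokes: the paper's own proof of this informal theorem is simply a citation to \cite{rubin-book, kato-euler-iwasawa-selmer, perrin-riou-euler-systems}, and the sketch you have written matches the more detailed outline the paper provides later in \S3.4--3.5 (derivative classes, Chebotarev choice of Kolyvagin primes, and the global duality diagram (\ref{eqn:global-duality-Sigma})). One small discrepancy worth noting: the paper, following Rubin, first bounds the $p$-strict Selmer group $\mathrm{Sel}_{\lbrace p \rbrace}(\mathbb{Q}, W^*(1))$ and only afterwards compares it with the Bloch--Kato Selmer group via the surjectivity of $\mathrm{loc}^s_p$, whereas you aim at $\mathrm{Sel}(\mathbb{Q}, V^*(1))$ directly; in practice the local condition at $p$ has to be handled separately, so your ``at bad places one matches local conditions'' clause would need to be unpacked along those lines.
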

\begin{proof}
See \cite{rubin-book, kato-euler-iwasawa-selmer, perrin-riou-euler-systems}.
\end{proof}
More precise versions of the above statements will follow (Definition \ref{defn:euler-systems-rubin} and Theorem \ref{thm:finiteness-rubin}).
\subsubsection{} \label{subsubsec:image}
There are three layers of the applications of Euler systems:
\begin{enumerate}
\item Bounding the exponent of Selmer groups (also known as the annihilation result).
This ensures the finiteness but it does not give a precise upper bound.
\item Bounding the size of Selmer groups.
This gives an upper bound of the Selmer group by the bottom class of the Euler system.
\item Describing the structure of Selmer groups.
This determines the structure of the Selmer group by the associated Kolyvagin system.
\end{enumerate}
In order to have better applications, more strict conditions on the Galois image are needed.
This is essentially related to the use of Chebotarev density theorem.

We describe the corresponding Galois image assumptions for the case of elliptic curves.
\begin{enumerate}
\item[($\mathrm{Hyp}(\mathbb{Q}, V)$)] $V$ is irreducible, and there exists $\tau \in \mathrm{Gal}(\overline{\mathbb{Q}}/\mathbb{Q}(\zeta_{p^\infty}))$ such that $V/(\tau-1)V$ is one-dimensional \cite[page 37]{rubin-book}. This is enough for bounding the exponent of Selmer groups.
\item[($\mathrm{Hyp}(\mathbb{Q}, T)$)] $p > 2$, $E[p]$ is irreducible, and there exists $\tau \in \mathrm{Gal}(\overline{\mathbb{Q}}/\mathbb{Q}(\zeta_{p^\infty}))$ such that $T/(\tau-1)T$ is one-dimensional \cite[page 37]{rubin-book}. This is enough for bounding the size of Selmer groups.
\item[(KS)] $p > 5$, $E[p]$ is absolutely irreducible, there exists $\tau \in \mathrm{Gal}(\overline{\mathbb{Q}}/\mathbb{Q}(\zeta_{p^\infty}))$ such that $T/(\tau-1)T$ is one-dimensional, and $\mathrm{H}^1(\mathbb{Q}_{T}(\zeta_{p^\infty})/\mathbb{Q}, E[p]) = 0$
where $\mathbb{Q}_T$ is the minimal field extension of $\mathbb{Q}$ such that $\mathrm{Gal}(\overline{\mathbb{Q}}/\mathbb{Q}_T)$ acts trivially on $T$ \cite[page 27]{mazur-rubin-book}. This is enough for describing the structure of Selmer groups.
\end{enumerate}
Recently, the $p >5$ condition in (KS) is weakened to $p> 2$ by the work of Sakamoto \cite{sakamoto-p-3}. 
Our focus lies in the latter two statements.
\subsection{Setup}
Let $p > 2$ be a prime and $F$ be a finite extension of $\mathbb{Q}_p$.
Let $\mathcal{O} = \mathcal{O}_F$, and denote by $\mathfrak{m}_F = (\varpi)$ the maximal ideal of $\mathcal{O}$.
Let $T$ be a free $\mathcal{O}$-module of finite rank $n$ with continuous action of $G_{\mathbb{Q}}$.
Let $V =  T \otimes_{\mathcal{O}} F$.
We assume that $V$ is geometric, i.e.
\begin{itemize}
\item $V$ is unramified outside a finite set of primes $\Sigma$
\item $V$ is de Rham at $p$ in the sense of Fontaine's $p$-adic Hodge theory.
\end{itemize}
This means that $V$ is ``reasonably good".
Then $\rho : G_{\mathbb{Q}} \to \mathrm{Aut}_{\mathcal{O}}(V)$
factors through $G_{\mathbb{Q}, \Sigma}= \mathrm{Gal}(\mathbb{Q}_{\Sigma}/\mathbb{Q})$
where $\mathbb{Q}_{\Sigma}$ is the maximal extension of $\mathbb{Q}$ unramified outside $\Sigma$.
We also write $W = V/T$. When $T = \mathrm{Ta}_pE$, we have $W  \simeq E[p^\infty]$.

For $m \geq 1$, we define $W_m = \varpi^{-m}T/T = W[\varpi^m] \simeq T/\varpi^m T$.
Define their Kummer duals by
\[
\xymatrix{
T^*(1) = \mathrm{Hom}_{\mathcal{O}}(T, \mathcal{O}(1)) , &
V^*(1) = \mathrm{Hom}_{F}(V, F(1)) , &
W^*(1) = V^*(1) / T^*(1) .
}
\]

\subsection{$L$-functions and Selmer groups}
For $\ell \in\Sigma$, let
$$P_\ell (V,x) = \mathrm{det}(I_n - x \cdot \rho(\mathrm{Frob}_\ell)\vert_V)$$
where $\mathrm{Frob}_\ell$ is the arithmetic Frobenius\footnote{Kato follows the \emph{cohomological} convention of modular Galois representations but uses \emph{arithmetic} Frobenius elements \cite[$\S$14.10]{kato-euler-systems}.} at $\ell$.
Set 
$$L^{\Sigma} (V,s) = \prod_{\ell \not\in \Sigma} P(V, \ell^{-s})^{-1}$$
which converges for $\mathrm{Re}(s) \gg 0$.

We recall the notion of Selmer structures/local conditions.
\subsubsection{}
For every prime $\ell$ except $p$ and $ \infty$, define
$\mathrm{H}^1_f(\mathbb{Q}_\ell, V) = \mathrm{ker} \left( \mathrm{H}^1(\mathbb{Q}_\ell, V) \to  \mathrm{H}^1(I_\ell, V)   \right )$
 where $I_\ell \subseteq \mathrm{Gal}(\overline{\mathbb{Q}}_\ell/\mathbb{Q}_\ell)$ is the inertia subgroup at $\ell$.
\subsubsection{}
For $\ell = \infty$, we have $\mathrm{H}^1_f (\mathbb{R}, V) = 0$ since $p > 2$.
\subsubsection{}
For $\ell = p$, we consider two different structures.
\begin{itemize}
\item $\mathrm{H}^1_f(\mathbb{Q}_p, V) = 0$ for the $p$-strict Selmer groups.
\item $\mathrm{H}^1_f(\mathbb{Q}_p, V) = \mathrm{ker} \left( \mathrm{H}^1(\mathbb{Q}_p, V) \to \mathrm{H}^1(\mathbb{Q}_p, V \otimes \mathbf{B}_{\mathrm{cris}}) \right)$ for the Bloch--Kato Selmer groups.
\end{itemize}
\subsubsection{}
In any case, 
$ \mathrm{H}^1_f(\mathbb{Q}_\ell, T)$ and
 $\mathrm{H}^1_f(\mathbb{Q}_\ell, W)$
 are defined as the preimage and the image of $\mathrm{H}^1_f(\mathbb{Q}_\ell, V)$
 with respect to $T \to V \to W$, respectively.
Write $\mathrm{H}^1_{/f} = \dfrac{\mathrm{H}^1}{\mathrm{H}^1_f}$.
\subsubsection{}
Let $\Sigma'$ be a finite set of primes and $M$ be a Galois module.
Then the $\Sigma'$-relaxed Selmer group of $M$ is defined by
\begin{align*}
\mathrm{Sel}^{\Sigma'}(\mathbb{Q}, M) & = 
\mathrm{ker} \left(
\mathrm{H}^1(\mathbb{Q}, M) \to \prod_{\ell \not\in \Sigma'} \mathrm{H}^1_{/f}(\mathbb{Q}_\ell, M)
 \right) \\
& = \mathrm{ker} \left(
\mathrm{H}^1(\mathbb{Q}_{\Sigma \cup \Sigma'}, M) \to \prod_{\ell \not\in (\Sigma \cup \Sigma') \setminus \Sigma'} \mathrm{H}^1_{/f}(\mathbb{Q}_\ell, M)
 \right)
\end{align*}
and the $\Sigma'$-strict Selmer group of $M$ is defined by
$$\mathrm{Sel}_{\Sigma'}(\mathbb{Q}, M)  = 
\mathrm{ker} \left(
\mathrm{Sel}^{\Sigma'}(\mathbb{Q}, M) \to \prod_{\ell \in \Sigma'} \mathrm{H}^1(\mathbb{Q}_\ell, M)
 \right) .$$
\subsubsection{}
The weak form of the Bloch--Kato conjecture can be stated as follows \cite{bellaiche-cmi-notes}:
\begin{conj}[Bloch--Kato]
$$\mathrm{ord}_{s=0} L(V,s) = \mathrm{dim}_F \mathrm{Sel}(\mathbb{Q}, V^*(1)) - \mathrm{dim}_F \mathrm{H}^0(\mathbb{Q}, V^*(1)) .$$
\end{conj}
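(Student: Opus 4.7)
The final statement is a conjecture --- the weak Bloch--Kato conjecture --- whose full proof is far beyond current techniques for general geometric $V$. I outline a plan in the spirit of the Euler/Kolyvagin system machinery sketched in this lecture, which at best settles the direction
\[
\dim_F \mathrm{Sel}(\mathbb{Q}, V^*(1)) - \dim_F \mathrm{H}^0(\mathbb{Q}, V^*(1)) \leq \mathrm{ord}_{s=0} L(V,s)
\]
and only when $V$ arises from an automorphic form for which an Euler system with an explicit reciprocity law is available.

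First I would construct an Euler system $\mathbf{c} = \{c_L\}$ for $T^*(1)$, indexed by abelian extensions $L/\mathbb{Q}$, satisfying norm-compatible relations whose factors are built from the polynomials $P_\ell(V,x)$. For motives attached to automorphic forms, candidates come from Beilinson--Kato zeta elements, Heegner and Shimura cycles, Beilinson--Flach elements, and more recently from the Loeffler--Zerbes program for automorphic Galois representations; all of these require motivic classes on a suitable Shimura variety together with a pushforward/trace trick producing the norm relation. Next, I would prove an explicit reciprocity law identifying $\mathrm{exp}^* \circ \mathrm{loc}_p(c_{\mathbb{Q}})$ (or, better, its Iwasawa refinement through a Coleman-type map) with the leading Taylor coefficient of $L(V,s)$ at $s=0$, up to the Euler factor at $p$ and a suitable period; for $V = V_pE$ this is precisely Theorem \ref{thm:kato-finiteness-mordell-weil}.

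With these two ingredients in hand, I would feed $\mathbf{c}$ into the Euler system machinery of Rubin, Kato, and Perrin-Riou: non-vanishing of $L(V,0)$ forces the bottom class $c_{\mathbb{Q}}$ to have non-trivial image under the dual exponential, the derivative/Kolyvagin system process (Mazur's diagram from the end of Lecture 1) converts the higher-level classes into ramified cohomology classes, and global Poitou--Tate duality then bounds $\mathrm{Sel}(\mathbb{Q}, V^*(1))$ from above. The large-image hypotheses recalled in the setup earlier ensure the Chebotarev arguments required in the derivative process go through. In the higher-order vanishing case one would need a Kolyvagin-system refinement sensitive to derivatives of $L(V,s)$, matching the principle emphasized in the excerpt that Kolyvagin systems are the cohomological shape of the Taylor expansion of $L$-functions.

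For the opposite inequality, one would deform $V$ cyclotomically and invoke an Iwasawa Main Conjecture: the Euler system contributes one divisibility (following Kato), and an Eisenstein-congruence argument in the style of Skinner--Urban is expected to contribute the other; specializing at the trivial character, combined with a Mazur-type control theorem and the interpolation property of a $p$-adic $L$-function for $V$, would recover the equality at $s=0$. The main obstacle, by far, is the construction of the Euler system and the proof of its reciprocity law: both are unknown for most geometric $V$, and even when an Euler system exists, the explicit reciprocity calculation is a delicate exercise in $p$-adic Hodge theory typically resting on a Rankin--Selberg integral representation. The Eisenstein half of the main conjecture is at least as hard, and in addition requires an independent construction of a $p$-adic $L$-function with the correct interpolation property --- itself open in most cases.
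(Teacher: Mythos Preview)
The statement is a \emph{conjecture}, and the paper does not prove it; it is simply recorded (with a citation to \cite{bellaiche-cmi-notes}) as motivation for the Euler system machinery that follows. Your opening sentence correctly identifies this, so there is no gap to diagnose: you have not claimed to prove what cannot currently be proved.

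Your outline of a conditional strategy is a fair summary of how the field attacks special cases, and it is consonant with the paper's own narrative arc (Euler system $\Rightarrow$ one divisibility via Kolyvagin systems and global duality; Eisenstein congruences $\Rightarrow$ the other; specialize via control theorems). One small correction of emphasis: the Euler system you would want is for $T$, not for $T^*(1)$, since the machinery bounds $\mathrm{Sel}(\mathbb{Q}, W^*(1))$ from classes in $\mathrm{H}^1(K,T)$ (cf.\ Definition~\ref{defn:euler-systems-rubin} and Theorem~\ref{thm:finiteness-rubin}). Also, even in the best-understood cases the strategy you sketch only reaches the inequality in analytic rank $0$ or $1$; your remark that higher-order vanishing would require a Kolyvagin-system refinement ``sensitive to derivatives'' understates the difficulty, as no such mechanism is known for complex $L$-derivatives beyond rank one.
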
 
 
\subsection{The Euler system argument: a sketch of the idea}
Let $\mathbb{Q}^{\mathrm{ab}}$ be the maximal abelian extension of $\mathbb{Q}$.
For a prime $q$ with $q \equiv 1 \pmod{p}$, denote by $\mathbb{Q}(q)$ the maximal $p$-subextension of $\mathbb{Q}$ in $\mathbb{Q}(\zeta_q)$.
Let $\mathcal{K}$ be a subextension of $\mathbb{Q}$ in $\mathbb{Q}^{\mathrm{ab}}$
such that $\mathbb{Q}_\infty \subseteq \mathcal{K}$ and $\mathbb{Q}(q) \subseteq \mathcal{K}$ for infinitely many primes $q$ with $q \equiv 1 \pmod{p}$.
\begin{defn} \label{defn:euler-systems-rubin}
An \textbf{Euler system for $(T, \mathcal{K})$}
is a collection of cohomology classes
$$\mathbf{z} = \left\lbrace z_K \in \mathrm{H}^1(K,T) : \mathbb{Q} \subseteq K \subseteq \mathcal{K} \right\rbrace$$
where $K$ runs over \emph{finite} extensions of $\mathbb{Q}$ in $\mathcal{K}$
such that
$$
\mathrm{cores}_{\mathbb{Q}(\zeta_{n\ell})/ \mathbb{Q}(\zeta_{n})  } (z_{ \mathbb{Q}(\zeta_{n\ell})  }  ) = \left\lbrace
\begin{array}{ll}
 z_{ \mathbb{Q}(\zeta_{n})  } & \textrm{ if  } \ell \vert n \textrm{ or } \ell \in \Sigma \\
 P_\ell (V^*(1), \mathrm{Fr}^{-1}_\ell) \cdot z_{ \mathbb{Q}(\zeta_{n})  } & \textrm{ otherwise.}
\end{array} \right.
$$
\end{defn} 
As a rough picture, $z_{ \mathbb{Q}(\zeta_{n})}$ is related to $L(V \otimes \mathbb{Q}(\zeta_n),0)$
and $\mathrm{cores}_{\mathbb{Q}(\zeta_{n})/ \mathbb{Q}  } (z_{ \mathbb{Q}(\zeta_{n})})$ is related to $L^{\Sigma_n}(V,0)$
where $\Sigma_n = \Sigma \cup \lbrace \ell \vert n \rbrace$.
\begin{thm}[Rubin] \label{thm:finiteness-rubin}
Let $\mathbf{z}$ be an Euler system for $(T, \mathcal{K})$.
Assume $\mathrm{Hyp} (\mathbb{Q},T)$ in $\S$\ref{subsubsec:image} holds.
If $z_{\mathbb{Q}}$ is not a torsion, then
$\mathrm{Sel}_{\lbrace p \rbrace} (\mathbb{Q}, W^*(1))$ is finite.
\end{thm}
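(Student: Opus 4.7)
The plan is to reconstruct Kolyvagin's derivative argument in the form systematized by Rubin. First I would introduce, for a sufficiently large integer $M$, the set of Kolyvagin primes $\mathcal{P}_M$ consisting of rational primes $q \notin \Sigma$ with $q \equiv 1 \pmod{p^M}$ and such that the Frobenius $\mathrm{Frob}_q$ acts on $T/\varpi^M T$ in the same conjugacy class as the element $\tau$ supplied by $\mathrm{Hyp}(\mathbb{Q},T)$. By Chebotarev, and because $\mathrm{Hyp}(\mathbb{Q},T)$ forces $T/(\tau-1)T$ to be free of rank one over $\mathcal{O}/\varpi^M$, the set $\mathcal{P}_M$ has positive density and admits the usual finite--singular comparison isomorphism $\mathrm{H}^1_f(\mathbb{Q}_q, T/\varpi^M T) \xrightarrow{\sim} \mathrm{H}^1_{/f}(\mathbb{Q}_q, T/\varpi^M T)$.

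Second, for each squarefree product $n$ of primes in $\mathcal{P}_M$, I would apply the Kolyvagin derivative operator $D_n = \prod_{q \mid n} D_q \in \mathbb{Z}[\mathrm{Gal}(\mathbb{Q}(\zeta_n)/\mathbb{Q})]$ to the Euler system class $z_{\mathbb{Q}(\zeta_n)}$. The Euler system norm relation, combined with the congruence $\mathrm{Frob}_q \equiv 1 \pmod{p^M}$ on $T/\varpi^M T$ for $q \mid n$, ensures that the image of $D_n z_{\mathbb{Q}(\zeta_n)}$ in $\mathrm{H}^1(\mathbb{Q}(\zeta_n), T/\varpi^M T)$ is $\mathrm{Gal}(\mathbb{Q}(\zeta_n)/\mathbb{Q})$-invariant; using that $\mathrm{H}^0(\mathbb{Q}(\zeta_n), W_M) = 0$ (which follows from irreducibility of $V$), this invariant class lifts uniquely to a derived class $\kappa_n \in \mathrm{H}^1(\mathbb{Q}, T/\varpi^M T)$. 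A direct local analysis shows that $\kappa_n$ is unramified outside $\Sigma \cup \{q : q \mid n\}$, satisfies the local condition $\mathrm{H}^1_f$ at every prime not dividing $pn$, and has a singular component at each $q \mid n$ which the finite--singular isomorphism identifies with a nontrivial multiple of $\kappa_{n/q}$ restricted to $\mathrm{H}^1(\mathbb{Q}_q, T/\varpi^M T)$.

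Third, I would deploy global Poitou--Tate duality: for any $c \in \mathrm{Sel}_{\{p\}}(\mathbb{Q}, W^*(1))$, the sum over all places $\ell$ of the local pairings $\langle \mathrm{loc}_\ell \kappa_n, \mathrm{loc}_\ell c \rangle_\ell$ vanishes. The local conditions on $\kappa_n$ and the $p$-strict condition on $c$ force every term to vanish except possibly at primes $q \mid n$, giving the identity
\begin{equation*}
\sum_{q \mid n} \langle \mathrm{loc}_q \kappa_n, \mathrm{loc}_q c \rangle_q = 0.
\end{equation*}
Now I would proceed by induction on $\nu(n) := $ number of prime factors of $n$, starting from $\kappa_1$ (the image of $z_{\mathbb{Q}}$, which is nontorsion by hypothesis), and use Chebotarev together with $\mathrm{Hyp}(\mathbb{Q},T)$ to select at each stage a Kolyvagin prime $q$ at which $\mathrm{loc}_q c$ detects a prescribed generator of the cyclic piece of $c$ while the singular part of $\kappa_{nq}$ at $q$ retains as much $\varpi$-divisibility information as possible. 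Iterating this, the displayed identity forces $\varpi^{M - O(1)} c = 0$ in a manner uniform in $c$, bounding the exponent and---combined with bounds on the corank---the size of $\mathrm{Sel}_{\{p\}}(\mathbb{Q}, W^*(1))$.

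The main obstacle is the Chebotarev step of the induction: one must choose the Kolyvagin primes so that the local image of an \emph{a priori} unknown class $c$ matches a prescribed element in a Galois-equivariant way, while simultaneously controlling the $\varpi$-adic precision lost in each application of the derivative operator $D_q$. This is exactly the point at which the existence of the element $\tau$ with $T/(\tau-1)T$ cyclic is used: it guarantees a simultaneous density statement for Frobenius elements in $\mathrm{Gal}(\overline{\mathbb{Q}}/\mathbb{Q}(\zeta_{p^\infty}))$ that decouples the conditions on $\kappa_n$ and on $c$, and it ensures that the finite--singular isomorphism is nondegenerate to arbitrary $\varpi$-adic precision. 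Tracking the finite loss of precision and absorbing it into the final bound is the technical heart of the argument; the rest is bookkeeping.
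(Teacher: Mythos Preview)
Your proposal is correct and follows essentially the same approach as the paper's own sketch: both outline Rubin's Euler system machinery by constructing Kolyvagin derivative classes $\kappa_n$ from $\mathbf{z}$, using global Poitou--Tate duality to play the singular images of these classes against the $p$-strict Selmer group, and invoking Chebotarev together with the element $\tau$ from $\mathrm{Hyp}(\mathbb{Q},T)$ to choose the auxiliary primes. Your write-up is somewhat more explicit about the inductive step and the finite--singular comparison, but the underlying strategy is identical.
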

We sketch the idea of the proof of Theorem \ref{thm:finiteness-rubin}.
We bound $ \mathrm{Sel}_{\lbrace p \rbrace} (\mathbb{Q}, W^*_m(1))$ independently of $m$,  and the main tools are local and global dualities in Galois cohomology.

Let $$\langle - ,  - \rangle_\ell : 
 \mathrm{H}^1(\mathbb{Q}_\ell, W_m) \times
\mathrm{H}^1(\mathbb{Q}_\ell, W^*_m(1)) \to \mathcal{O}/\varpi^m\mathcal{O}$$
be the local Tate pairing.
Then the local duality says
$\mathrm{H}^1_f(\mathbb{Q}_\ell, W_m)^\perp = \mathrm{H}^1_f(\mathbb{Q}_\ell, W^*_m(1))$.
Fix a finite set of primes $\Sigma'$ not including $p$.
Consider the diagram
\begin{equation} \label{eqn:global-duality-Sigma}
\begin{split}
\xymatrix{
\mathrm{Sel}^{ \lbrace p \rbrace }( \mathbb{Q}, W_m ) \ar@{^{(}->}[r] & \mathrm{Sel}^{ \Sigma' \cup \lbrace p \rbrace }( \mathbb{Q}, W_m ) \ar[r]^-{\mathrm{loc}^s_{\Sigma'}} & \sum_{\ell \in \Sigma'} \mathrm{H}^1_{/f}(\mathbb{Q}_\ell, W_m) \\
& & \times \\
\mathrm{Sel}_{\Sigma' \cup \lbrace p \rbrace }( \mathbb{Q}, W^*_m(1) ) \ar@{^{(}->}[r] & \mathrm{Sel}_{  \lbrace p \rbrace }( \mathbb{Q}, W^*_m(1) ) \ar[r]^-{\mathrm{loc}^f_{\Sigma'}} & \sum_{\ell \in \Sigma'} \mathrm{H}^1_f(\mathbb{Q}_\ell, W^*_m(1)) \ar[d]_-{ \sum_{\ell \in \Sigma'} \langle - , -  \rangle_{\ell} } \\
 & & \mathcal{O}/\varpi^m \mathcal{O}
}
\end{split}
\end{equation}
where
\begin{align*}
\mathrm{loc}^s_{\Sigma'} & = \bigoplus_{\ell \in \Sigma'} \mathrm{loc}^s_{\ell} : \mathrm{Sel}^{ \Sigma' \cup \lbrace p \rbrace }( \mathbb{Q}, W_m ) \to \bigoplus_{\ell \in \Sigma'} \mathrm{H}^1(\mathbb{Q}_\ell, W_m) \to \bigoplus_{\ell \in \Sigma'} \mathrm{H}^1_{/f}(\mathbb{Q}_\ell, W_m) , \\
\mathrm{loc}^f_{\Sigma'} & = \bigoplus_{\ell \in \Sigma'} \mathrm{loc}^f_{\ell} : \mathrm{Sel}_{  \lbrace p \rbrace }( \mathbb{Q}, W^*_m(1) ) \to \bigoplus_{\ell \in \Sigma'} \mathrm{H}^1_f(\mathbb{Q}_\ell, W^*_m(1)) .
\end{align*}
The global duality implies $\mathrm{im}( \mathrm{loc}^s_{\Sigma'} )^{\perp} =   \mathrm{im}( \mathrm{loc}^f_{\Sigma'} )$.
One of the key intuitions of the Euler system argument is the following behavior.
\begin{quote}
If $\mathrm{im}( \mathrm{loc}^s_{\Sigma'} )$ gets larger, then $\mathrm{im}( \mathrm{loc}^f_{\Sigma'} )$ gets smaller.
\end{quote}
Therefore, it suffices to construct ``relevant" $\Sigma'$ and elements $\kappa_{\Sigma'} \in \mathrm{Sel}^{ \Sigma' \cup \lbrace p \rbrace }( \mathbb{Q}, W_m )$ from $\mathbf{z}$
such that
\begin{itemize}
\item $\kappa_{\Sigma'}$ is ramified at primes in $\Sigma'$, so its image under $\mathrm{loc}^s_{\Sigma'}$ is non-trivial, and
\item $\mathrm{length} ( \mathrm{coker}( \mathrm{loc}^{s}_{\Sigma'} ) )$
and 
$\mathrm{length} ( \mathrm{Sel}_{ \Sigma' \cup \lbrace p \rbrace } (\mathbb{Q}, W^*_m(1))  )$
are bounded independently of $m$.
\end{itemize}
Here,  $\kappa_{\Sigma'}$'s are called \textbf{Kolyvagin derivative classes}.
\begin{rem}
Also, ``relevant" $\Sigma'$ means that
the image of the arithmetic Frobenius $\mathrm{Fr}_{\ell}$ at $\ell \in \Sigma'$
is equivalent to the image of $\tau \in \mathrm{Gal}(\mathbb{Q}(W_m)(\zeta_{p^m})/\mathbb{Q})$.
Chebotarev density theorem plays the key role to construct such a $\Sigma'$.
\end{rem}
Therefore, the finiteness of the $p$-strict Selmer group $\mathrm{Sel}_{ \lbrace p \rbrace }( \mathbb{Q}, W^*(1) )$ can be proved.
\begin{assu}
From now on, we consider the case of elliptic curves only.
\end{assu}
Then how to obtain Theorem \ref{thm:kato-finiteness-mordell-weil-sha} from the finiteness of  $\mathrm{Sel}_{ \lbrace p \rbrace }( \mathbb{Q}, W^*(1) )$?
We now compare the difference between $\mathrm{Sel}_{ \lbrace p \rbrace }( \mathbb{Q}, W^*(1) )$ and $\mathrm{Sel}( \mathbb{Q}, W^*(1) )$.
We consider the following variant of (\ref{eqn:global-duality-Sigma}) with the Bloch--Kato local condition at $p$. In other words, we have
\[
\xymatrix{
\mathrm{Sel}( \mathbb{Q}, T ) \ar@{^{(}->}[r] & \mathrm{Sel}^{  \lbrace p \rbrace }( \mathbb{Q}, T ) \ar[r]^-{\mathrm{loc}^s_{p}} &  \mathrm{H}^1_{/f}(\mathbb{Q}_p, T) \\
& & \times \\
\mathrm{Sel}_{ \lbrace p \rbrace }( \mathbb{Q}, W^*(1) ) \ar@{^{(}->}[r] & \mathrm{Sel}( \mathbb{Q}, W^*(1) ) \ar[r]^-{\mathrm{loc}^f_{p}} & \mathrm{H}^1_f(\mathbb{Q}_p, W^*(1)) \ar[d]_-{  \langle - , -  \rangle_{p} } \\
 & & F/ \mathcal{O}
}
\]
with $\mathrm{im}(\mathrm{loc}^s_p)^\perp = \mathrm{im}(\mathrm{loc}^f_p)$.
Note that $\mathrm{H}^1_f(\mathbb{Q}_p, T)^\perp = \mathrm{H}^1_f(\mathbb{Q}_p, W^*(1))$.
Therefore, the proof of the finiteness of $\mathrm{Sel}( \mathbb{Q}, W^*(1) )$ reduces to
showing that the rational restriction map 
$$\mathrm{loc}^s_p : \mathrm{Sel}^{  \lbrace p \rbrace }( \mathbb{Q}, V ) \to \mathrm{H}^1_{/f}(\mathbb{Q}_p, V)$$
is surjective.
By Theorem \ref{thm:kato-finiteness-mordell-weil}, we have
\[
\xymatrix@R=0em{
\mathrm{Sel}^{  \lbrace p \rbrace }( \mathbb{Q}, V ) \ar[r]^-{\mathrm{loc}^s_p} & \mathrm{H}^1_{/f}(\mathbb{Q}_p, V) \ar[r]^-{\mathrm{exp}^*}_-{\simeq} & \mathrm{Fil}^0(\mathbf{D}_{\mathrm{cris}}(V)) \\
z_{\mathbb{Q}} \ar@{|->}[rr] & & \mathrm{exp}^* \circ \mathrm{loc}^s_p ( z_{\mathbb{Q}} ) = \dfrac{L^{(p)}(E,1)}{\Omega^+_E} \cdot \omega_E \neq 0 ,
}
\]
so $\mathrm{loc}^s_p$ is surjective. Theorem \ref{thm:kato-finiteness-mordell-weil-sha} follows.

\subsection{The Kolyvagin system argument}
We move to \emph{Kolyvagin systems}.
\subsubsection{}
First, why Kolyvagin systems? As we mentioned before, there are at least two advantages.
\begin{enumerate}
\item The sharp bound via the primitivity, which gives a mod $p$ criterion for verifying the exact Bloch--Kato type formula and the main conjecture.
\item The structure theorem, not only the size (e.g. \cite{kolyvagin-selmer}).
\end{enumerate}
We focus on the second advantage in this article.
By utilizing the Kolyvagin system argument, the following statement (Theorem \ref{thm:finiteness-rubin})
\begin{quote}
If $\mathbf{z} = \lbrace z_{K} \rbrace_K$ is an Euler system and $z_{\mathbb{Q}}$ is not a torsion, then
$\mathrm{Sel}_{ \lbrace p \rbrace }( \mathbb{Q}, W^*(1) )$ is finite.
\end{quote}
can be refined as follows:
\begin{quote}
Let $\ks = \lbrace \kappa_n \rbrace_n$ be the Kolyvagin system attached to the above Euler system $\mathbf{z}$.
If $\kappa_1 = z_{\mathbb{Q}}$ is not a torsion, then
the structure of $\mathrm{Sel}_{ \lbrace p \rbrace }( \mathbb{Q}, W^*(1) )$
is described in terms of all $\kappa_n$'s.
\end{quote}
What's the cost for this refinement?
Let $z_{K}$ be an Euler system class and $\Sigma'$ be the finite set of primes $\ell$ where $\ell$ is ramified in $K/\mathbb{Q}$.
Applying the Kolyvagin derivative $D_K$ (e.g. $\S$\ref{subsubsec:kolyvagin-system-argument}) to an Euler system class $z_{K}$, we have \emph{a priori} 
$$\kappa_{\Sigma'} := D_K z_K \pmod{\varpi^m} \in \mathrm{Sel}^{\lbrace p \rbrace \cup \Sigma'}(\mathbb{Q}, W_m) .$$
Following the theory of Kolyvagin systems, 
we can organize $\kappa_{\Sigma'}$ with more controlled local conditions at primes in $\Sigma'$.
Let $\ell$ be a prime with $\ell \equiv 1 \pmod{\varpi^m}$.
Then the transverse local condition at $\ell$ is defined by
$$\mathrm{H}^1_{\mathrm{tr}}(\mathbb{Q}_\ell, W_m) = \mathrm{ker} \left(
\mathrm{H}^1(\mathbb{Q}_\ell, W_m) \to \mathrm{H}^1(\mathbb{Q}_\ell(\zeta_\ell), W_m)
 \right),$$
and it can be viewed as the complement of the finite local condition $\mathrm{H}^1_{f}$ at $\ell$.
There exists the finite-singular comparison map
$$\varphi^{\mathrm{fs}}_{\ell} : \mathrm{H}^1_f(\mathbb{Q}_\ell, W_m) \to \mathrm{H}^1_{/f}(\mathbb{Q}_\ell, W_m) = \mathrm{H}^1_{\mathrm{tr}}(\mathbb{Q}_\ell, W_m)$$
obtained from the Euler factor at $\ell$.
Let $n = \prod_{\ell \in \Sigma'} \ell$, and we would like to construct
$$\kappa_n \in \mathrm{Sel}^{\lbrace p \rbrace}_{n\textrm{-}\mathrm{tr}}(\mathbb{Q}, W_m),$$
i.e.
$\mathrm{loc}_\ell(\kappa_n) \in \mathrm{H}^1_{\mathrm{tr}}(\mathbb{Q}_\ell, W_m)$
for every $\ell \in \Sigma'$.
Following \cite[Appendix A]{mazur-rubin-book}, there exists an explicit (but complicated) formula for
$\kappa_n$ in terms of $\kappa_{\Sigma''}$'s where $\Sigma''$ runs over subsets of $\Sigma'$.
The axiom for Kolyvagin systems is the following local relation
$$\mathrm{loc}_\ell (\kappa_{n\ell}) =  \varphi^{\mathrm{fs}}_{\ell} ( \mathrm{loc}_\ell(\kappa_n) ) .$$
\begin{prop}
\begin{enumerate}
\item $\mathrm{H}^1_f(\mathbb{Q}_\ell, W_m)$ and $\mathrm{H}^1_f(\mathbb{Q}_\ell, W^*_m(1))$ are orthogonal to each other with respect to the local Tate pairing $\langle - , - \rangle_{\ell}$. 
\item $\mathrm{H}^1_{\mathrm{tr}}(\mathbb{Q}_\ell, W_m)$ and $\mathrm{H}^1_{\mathrm{tr}}(\mathbb{Q}_\ell, W^*_m(1))$ are orthogonal to each other with respect to the local Tate pairing $\langle - , - \rangle_{\ell}$.
\end{enumerate}
\end{prop}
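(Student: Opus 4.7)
The two assertions have parallel proofs via inflation--restriction, passing through different intermediate Galois quotients of $G_{\mathbb{Q}_\ell}$.

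For part (1), every class in $\mathrm{H}^1_f(\mathbb{Q}_\ell, W_m)$ is unramified by definition and hence is inflated from $\mathrm{H}^1(G_{\mathbb{Q}_\ell}/I_\ell, W_m^{I_\ell})$; the same holds for $\mathrm{H}^1_f(\mathbb{Q}_\ell, W_m^*(1))$. Compatibility of the cup product with inflation reduces the orthogonality to the vanishing of $\mathrm{H}^2(G_{\mathbb{Q}_\ell}/I_\ell, \mu_{p^m}^{I_\ell})$, which is automatic since $G_{\mathbb{Q}_\ell}/I_\ell \cong \widehat{\mathbb{Z}}$ has cohomological dimension one. A counting argument combining the local Euler characteristic formula with Tate local duality (matching $|\mathrm{H}^1_f(\mathbb{Q}_\ell, W_m)| = |\mathrm{H}^0(\mathbb{Q}_\ell, W_m)|$ against $|\mathrm{H}^2(\mathbb{Q}_\ell, W_m)| = |\mathrm{H}^0(\mathbb{Q}_\ell, W_m^*(1))|$) upgrades the vanishing of the pairing to exact annihilation.

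For part (2), I would play the analogous game with the cyclic tame quotient $\mathrm{Gal}(\mathbb{Q}_\ell(\zeta_\ell)/\mathbb{Q}_\ell)$, of order $\ell - 1$. Under the Kolyvagin prime hypothesis $\ell \equiv 1 \pmod{\varpi^m}$, together with $W_m$ being unramified at $\ell$, we have $\mu_{p^m} \subset \mathbb{Q}_\ell$ and the inertia $I_\ell$ acts trivially on both $W_m$ and $W_m^*(1)$, so one obtains the splitting $\mathrm{H}^1(\mathbb{Q}_\ell, W_m) = \mathrm{H}^1_f(\mathbb{Q}_\ell, W_m) \oplus \mathrm{H}^1_{\mathrm{tr}}(\mathbb{Q}_\ell, W_m)$ with the transverse summand realized as the inflation from $\mathrm{H}^1(\mathrm{Gal}(\mathbb{Q}_\ell(\zeta_\ell)/\mathbb{Q}_\ell), W_m)$. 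The cup product of two transverse classes then lifts to $\mathrm{H}^2(\mathrm{Gal}(\mathbb{Q}_\ell(\zeta_\ell)/\mathbb{Q}_\ell), \mu_{p^m})$, and via the invariant map of local class field theory it corresponds to the tame Hilbert symbol contribution of the form $(\ell,\ell)_{\ell,p^m} = (-1)^{(\ell-1)/p^m}$ in $\mu_{p^m}$. Here the hypothesis $p > 2$ is decisive: since $\ell$ is an odd prime and $\gcd(2, p^m) = 1$, the conditions $\ell \equiv 1 \pmod{p^m}$ and $\ell \equiv 1 \pmod{2}$ force $\ell \equiv 1 \pmod{2p^m}$ by the Chinese remainder theorem, so $(\ell-1)/p^m$ is even and the symbol vanishes. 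The same dimension count as in (1) then yields exact annihilation.

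The main technical obstacle is the tame Hilbert symbol calculation in part (2). Whereas the vanishing in part (1) is a purely formal consequence of the cohomological dimension of $\widehat{\mathbb{Z}}$, part (2) genuinely depends on the arithmetic of the tame extension and requires the parity argument above; the hypothesis $p > 2$ is exactly what enforces the right parity, and the statement fails at $p = 2$. This is one reason the theory of Kolyvagin systems is traditionally developed under the standing assumption $p > 2$.
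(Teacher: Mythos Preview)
The paper does not prove this proposition at all; it simply cites \cite[Prop.~1.3.2]{mazur-rubin-book}.  So there is no argument in the paper to compare against, and what you have written is in fact more detailed than the exposition you are being checked against.  Your sketch is essentially the standard proof one finds in that reference.

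Your argument for (1) is correct and entirely standard.  For (2) the outline is right but two points deserve sharpening.  First, inflation--restriction identifies $\mathrm{H}^1_{\mathrm{tr}}(\mathbb{Q}_\ell, W_m)$ with $\mathrm{H}^1\bigl(\Gamma, W_m^{G_{\mathbb{Q}_\ell(\zeta_\ell)}}\bigr)$, and since $\mathbb{Q}_\ell(\zeta_\ell)/\mathbb{Q}_\ell$ is totally ramified this invariant submodule is $W_m^{\mathrm{Frob}_\ell = 1}$, typically a proper rank-one summand of $W_m$ rather than all of $W_m$ as you wrote.  Second, the assertion that the cup product ``corresponds to $(\ell,\ell)_{\ell,p^m}$'' is the crux and should be made explicit: because $\Gamma$ acts trivially on these invariants, every transverse class is of the form $\chi \cup w$ with $\chi \in \mathrm{H}^1(\Gamma,\mathbb{Z}/p^m)$ the tame character and $w \in \mathrm{H}^0$, whence $c_1 \cup c_2 = (\chi \cup \chi)\cup(w_1 \otimes w_2)$; graded commutativity gives $2(\chi\cup\chi)=0$, which kills $\chi\cup\chi$ once $p>2$.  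Under the Kummer isomorphism the class $\chi$ corresponds to the uniformiser $\ell$, so this is precisely your Hilbert symbol calculation, and your parity argument with $(-1)^{(\ell-1)/p^m}$ is a concrete way of witnessing the same vanishing.
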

\begin{proof}
See \cite[Prop. 1.3.2]{mazur-rubin-book}.
\end{proof}
\begin{rem}
\begin{enumerate}
\item We consider the core rank one case only. There are the theories of higher (core) rank Kolyvagin systems and Stark systems  \cite{mazur-rubin-control,sakamoto-stark-systems,burns-sano-imrn,burns-sakamoto-sano-2,burns-sakamoto-sano-3}.
\item The image of the Galois representation is expected to be sufficiently large as described in $\S$\ref{subsubsec:image}.
\end{enumerate}
\end{rem}
\subsubsection{} \label{subsubsec:kolyvagin-system-argument}
From now on, we restrict ourselves to the case of elliptic curves again.
Let $\mathbf{z} = \left\lbrace z_K \right\rbrace_{K}$ be Kato's Euler system.
Then
$z_K \in \mathrm{H}^1(K,T)$ is characterized by
$$\sum_{\sigma \in \mathrm{Gal}(K/\mathbb{Q})} \left( \mathrm{exp}^* \circ \mathrm{loc}^s_p z^{\sigma}_K \right) \cdot \chi(\sigma)
= \dfrac{L^{(Sp)}(E,\chi, 1)}{\Omega^{\chi(-1)}_E} \cdot \omega_E$$
where
\begin{itemize}
\item $\chi$ is an even character of $\mathrm{Gal}(K/\mathbb{Q})$, and
\item $S$ is the product of the ramified primes of $K/\mathbb{Q}$.
\end{itemize}
For an integer $m \geq 1$, denote by $\mathcal{P}_m$ the set of primes $\ell$ such that
$(\ell, Np) = 1$, $a_\ell \equiv \ell +1 \pmod{p^m}$ and $\ell \equiv 1 \pmod{p^m}$.
For each $\ell \in \mathcal{P}_m$, fix a primitive root $\eta_{\ell}$ mod $\ell$ and
write
\[
\xymatrix@R=0em{
\mathrm{Gal}(\mathbb{Q}(\zeta_\ell)/\mathbb{Q}) & (\mathbb{Z}/\ell\mathbb{Z})^\times \ar[l]_-{\simeq} \\
\sigma_{\eta_\ell} & \eta_\ell \ar@{|->}[l] .
}
\]
the Kolyvagin derivative operator $D_\ell$ at $\ell \in \mathcal{P}_m$  is defined by
$$D_\ell = \sum i \cdot \sigma^i_{\eta_\ell}$$
Let $\mathcal{N}_m$ be the set of square-free products of primes in $\mathcal{P}_m$ and  $n \in \mathcal{N}_m$.
Then we define
$D_n = \prod_{\ell \vert n} D_\ell$.
Write $G_n = \mathrm{Gal}(\mathbb{Q}(\zeta_n)/\mathbb{Q})$ for convenience.
Let $\mathrm{ES}(\mathbb{Q}(\zeta_n), T) \subseteq \mathrm{H}^1(\mathbb{Q}(\zeta_n), T)$ be the $\mathbb{Z}_p\llbracket\mathrm{Gal}(\mathbb{Q}(\zeta_n)/\mathbb{Q})\rrbracket$-submodule generated by 
cohomology classes in $\mathrm{H}^1(\mathbb{Q}(\zeta_n), T)$ which are parts of Euler systems for $(T, \mathcal{K})$. See also \cite[$\S$4.2]{rubin-book}.
Then we have the commutative diagram
\[
\xymatrix{
\mathrm{ES}(\mathbb{Q}(\zeta_n), T) \ar[r]^-{D_n} \ar[rrdd] & D_n\left( \mathrm{ES}(\mathbb{Q}(\zeta_n), T) \right) \ar[r]^-{\bmod{p^m}} & \left( \mathrm{H}^1(\mathbb{Q}(\zeta_n), T)/p^m \right)^{G_n} \ar@{_{(}->}[d]  \\
& & \mathrm{H}^1(\mathbb{Q}(\zeta_n), T/p^m T)^{G_n} \ar[d]_-{\simeq}^-{\mathrm{res}^{-1}} \\
& & \mathrm{H}^1(\mathbb{Q}, T/p^m T)
}
\]
and the Euler system class $z_{\mathbb{Q}(\zeta_n)}$ maps to the Kolyvagin system class $\kappa_n$ following the above diagram 
$$z_{\mathbb{Q}(\zeta_n)} \mapsto
D_n z_{\mathbb{Q}(\zeta_n)}
\mapsto
D_n z_{\mathbb{Q}(\zeta_n)} \pmod{p^m}
\mapsto
\kappa_n .$$
Here, the restriction map is an isomorphism since we assume the large Galois image assumption.
Even without this assumption, we are still able to find the canonical inverse image of a derivative class under the restriction map. See \cite[Rem. 4.4.3]{rubin-book} for details.
\begin{rem}
Although there is an explicit formula for $\kappa_n$ in terms of $D_{n'} z_{\mathbb{Q}(\zeta_{n'})} \pmod{p^m}$ for $n'$ dividing $n$,
we can just identify  $D_n z_{\mathbb{Q}(\zeta_n)} \pmod{p^m}$ with $\kappa_n$ for this case.
It is essentially because Euler factors are quadratic.
\end{rem}

\subsubsection{}
We explain how the Kolyvagin system argument works \cite[Prop. 4.5.8]{mazur-rubin-book}.
We now fix \emph{one} integer $m \geq 1$ while every positive integer $m$ was considered together before.
Let
$\ks^{(m)} = (\kappa^{(m)}_n)_{n\in \mathcal{N}_m}$ be the mod $p^m$ reduction of Kato's Kolyvagin system $\ks$,
and
\begin{align*}
 \lambda (n, E[p^m]) & = \mathrm{length}_{\mathbb{Z}_p} \left( \mathrm{Sel}_{\lbrace p \rbrace, n\textrm{-}\mathrm{tr}}(\mathbb{Q}, E[p^m]) \right), \\
\partial^{(r)}(\ks^{(m)}) & = \mathrm{min} \left\lbrace  m - \mathrm{length}_{\mathbb{Z}_p} \left( \mathbb{Z}/p^m\mathbb{Z} \cdot \kappa^{(m)}_n  \right) : n \in \mathcal{N}_m, \nu(n) = r\right\rbrace.
\end{align*}

\begin{thm}[Mazur--Rubin]
Suppose that $\ks^{(m)}$ is non-trivial.
Then there exists an integer $j \geq 0$  such that
$$\mathbb{Z}/p^m\mathbb{Z} \cdot \kappa^{(m)}_n = p^{j + \lambda(n, E[p^m])} \cdot \mathrm{Sel}^{\lbrace p \rbrace}_{ n\textrm{-}\mathrm{tr} }(\mathbb{Q}, E[p^m]) $$
for every $n \in \mathcal{N}_m$. 
\end{thm}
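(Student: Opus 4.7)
The plan is to deduce this from the Mazur--Rubin structure theorem for core rank one Kolyvagin systems, by isolating how $\ks^{(m)}$ controls both the relaxed and strict Selmer groups simultaneously. The constant $j$ will be forced to equal the uniform ``primitivity defect''
$$j = \min_{n \in \mathcal{N}_m} \bigl( m - \mathrm{length}_{\mathbb{Z}_p}(\mathbb{Z}/p^m\mathbb{Z} \cdot \kappa^{(m)}_n) - \lambda(n, E[p^m]) \bigr),$$
which is finite because $\ks^{(m)}$ is assumed non-trivial. The real content of the theorem is that this minimum is attained at \emph{every} $n \in \mathcal{N}_m$, so that $\mathbb{Z}/p^m\mathbb{Z} \cdot \kappa^{(m)}_n$ is exactly the top piece $p^{j+\lambda(n, E[p^m])} \cdot \mathrm{Sel}^{\lbrace p \rbrace}_{n\textrm{-}\mathrm{tr}}(\mathbb{Q}, E[p^m])$.

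First I would set up the duality framework. The Weil self-duality $E[p^m] \simeq E[p^m]^*(1)$ together with the orthogonality of $\mathrm{H}^1_f$ and $\mathrm{H}^1_{\mathrm{tr}}$ at primes $\ell \in \mathcal{P}_m$ makes the situation symmetric, and the global Poitou--Tate sequence yields a controlled short exact comparison between the $n$-transverse and $n\ell$-transverse Selmer groups for each auxiliary $\ell \in \mathcal{P}_m$ coprime to $n$. In particular $\lambda(n, E[p^m])$ and $\lambda(n\ell, E[p^m])$ differ by at most one, and the precise value is governed by whether the localization maps at $\ell$ are zero or surjective onto the one-dimensional local cohomology groups $\mathrm{H}^1_f(\mathbb{Q}_\ell, E[p^m]) \simeq \mathrm{H}^1_{\mathrm{tr}}(\mathbb{Q}_\ell, E[p^m]) \simeq \mathbb{Z}/p^m\mathbb{Z}$.

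The technical engine is a Chebotarev step: under (KS), for any proper $\mathbb{Z}_p$-submodule $N$ of either of the two Selmer groups, and any prescribed element $\tau \in \mathrm{Gal}(\overline{\mathbb{Q}}/\mathbb{Q}(\zeta_{p^\infty}))$ as in \S\ref{subsubsec:image}, there exists $\ell \in \mathcal{P}_m$ whose arithmetic Frobenius realises $\tau$ and whose localisation map kills $N$ while remaining surjective onto the ambient rank-one local group (this is the content of \cite[Thm.~3.6.1]{mazur-rubin-book}, consuming the vanishing $\mathrm{H}^1(\mathbb{Q}_T(\zeta_{p^\infty})/\mathbb{Q}, E[p]) = 0$). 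Combined with the defining Kolyvagin relation $\mathrm{loc}_\ell(\kappa^{(m)}_{n\ell}) = \varphi^{\mathrm{fs}}_\ell(\mathrm{loc}_\ell(\kappa^{(m)}_n))$ and the fact that $\varphi^{\mathrm{fs}}_\ell$ is an isomorphism of rank-one $\mathbb{Z}/p^m\mathbb{Z}$-modules, this lets me toggle the local condition at $\ell$ between finite and transverse while exactly tracking the length of $\mathbb{Z}/p^m\mathbb{Z} \cdot \kappa^{(m)}_n$.

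The argument is then a descending induction on $\lambda(n, E[p^m])$. In the base case $\lambda(n, E[p^m]) = 0$ the Selmer group vanishes, both sides of the asserted equality are zero, and the common value $m - \mathrm{length}_{\mathbb{Z}_p}(\mathbb{Z}/p^m\mathbb{Z} \cdot \kappa^{(m)}_n)$ is shown to equal $j$ and to be independent of the minimising $n$ by a further Chebotarev comparison: given two such $n, n'$, one finds auxiliary primes joining them through a sequence that preserves $\lambda = 0$. For the inductive step with $\lambda(n, E[p^m]) > 0$, one chooses $\ell$ so that the transverse localisation at $\ell$ kills a prescribed nonzero element of $\mathrm{Sel}_{\lbrace p \rbrace, n\textrm{-}\mathrm{tr}}(\mathbb{Q}, E[p^m])$; global duality forces $\lambda(n\ell, E[p^m]) = \lambda(n, E[p^m]) - 1$, the inductive hypothesis applies to $\kappa^{(m)}_{n\ell}$, and the finite--singular comparison transports the depth conclusion back to $\kappa^{(m)}_n$. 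The main obstacle is precisely the Chebotarev production of ``useful'' primes: one must simultaneously arrange the Frobenius class $\tau$, the congruences $\ell \equiv 1$ and $a_\ell \equiv \ell+1 \pmod{p^m}$, and the prescribed cohomological image. This is where the full strength of (KS) is consumed, and also what historically forced the hypothesis $p > 5$ (later relaxed to $p > 2$ by Sakamoto \cite{sakamoto-p-3}).
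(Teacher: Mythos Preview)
The paper's own proof is only the citation ``See \cite[Cor.~4.5.2]{mazur-rubin-book}'', so there is no in-paper argument to compare against; your sketch is a reasonable reconstruction of Mazur--Rubin's actual proof (Chebotarev plus global duality, induction on $\lambda$, finite--singular transport).

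There is, however, a genuine confusion in your base case. When $\lambda(n, E[p^m]) = 0$ it is the \emph{strict} group $\mathrm{Sel}_{\{p\}, n\textrm{-}\mathrm{tr}}(\mathbb{Q}, E[p^m])$ that vanishes, not the \emph{relaxed} group $\mathrm{Sel}^{\{p\}}_{n\textrm{-}\mathrm{tr}}(\mathbb{Q}, E[p^m])$ appearing on the right-hand side of the theorem: by core rank one, the latter is free of rank one over $\mathbb{Z}/p^m\mathbb{Z}$ at such $n$. So your claim that ``both sides of the asserted equality are zero'' is wrong (and indeed contradicts your next clause, which treats $\mathrm{length}(\mathbb{Z}/p^m\mathbb{Z}\cdot\kappa^{(m)}_n)$ as a meaningful nonzero quantity). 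The actual base-case content is nontrivial and is precisely what you go on to describe: one must show that the depth of $\kappa^{(m)}_n$ inside the cyclic module $\mathrm{Sel}^{\{p\}}_{n\textrm{-}\mathrm{tr}}\cong\mathbb{Z}/p^m\mathbb{Z}$ is the \emph{same} integer $j$ at every core vertex, via connectivity of the core-vertex graph. With that slip corrected (and ``descending induction'' replaced by ordinary induction on $\lambda$), your outline matches Mazur--Rubin's argument.
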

\begin{proof}
See \cite[Cor. 4.5.2]{mazur-rubin-book}.
\end{proof}
\begin{thm}[Mazur--Rubin]
Suppose that $\ks^{(m)}$ is non-trivial.
Write 
$$\mathrm{Sel}_{\lbrace p \rbrace}( \mathbb{Q}, E[p^m]) \simeq \bigoplus_{i \geq 1} \mathbb{Z}/p^{d_i} \mathbb{Z}$$
with $d_1 \geq d_2 \geq \cdots $.
Then for every $r \geq 0$, we have 
$$\partial^{(r)}(\ks^{(m)}) = \mathrm{min} \left\lbrace m, j + \sum_{i > r }d_i \right\rbrace . $$
\end{thm}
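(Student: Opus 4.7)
The plan is to reduce the theorem to a combinatorial identity about the length invariants $\lambda(n, E[p^m])$ using the preceding theorem, and then to resolve that identity by matching elementary divisors one Kolyvagin prime at a time. First, I would read the preceding identity
$$\mathbb{Z}/p^m\mathbb{Z} \cdot \kappa_n^{(m)} \; = \; p^{j + \lambda(n, E[p^m])} \cdot \mathrm{Sel}^{\{p\}}_{n\text{-}\mathrm{tr}}(\mathbb{Q}, E[p^m])$$
as a statement about orders: the core-rank-one property forces the right side to be cyclic in this range, so the order of $\kappa_n^{(m)}$ is $p^{\max(0,\,m - j - \lambda(n, E[p^m]))}$. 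Consequently $m - \mathrm{length}_{\mathbb{Z}_p}(\mathbb{Z}/p^m \cdot \kappa_n^{(m)}) = \min(m,\, j + \lambda(n, E[p^m]))$, and taking the infimum over $\nu(n) = r$ reduces the target formula to the combinatorial identity
$$\min_{\nu(n) = r} \lambda(n, E[p^m]) \; = \; \sum_{i > r} d_i. \qquad (\ast)$$

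The identity $(\ast)$ has two parts. The lower bound, $\lambda(n, E[p^m]) \geq \sum_{i > r} d_i$ for every $n$ with $\nu(n) = r$, I would prove by induction on $r$: at each Kolyvagin prime $\ell \in \mathcal{P}_m$ the local cohomology decomposes as $\mathrm{H}^1(\mathbb{Q}_\ell, E[p^m]) = \mathrm{H}^1_f \oplus \mathrm{H}^1_{\mathrm{tr}}$ with both summands free of rank one over $\mathbb{Z}/p^m\mathbb{Z}$, so a Poitou--Tate duality argument (cf.\ the global duality discussion after Theorem \ref{thm:finiteness-rubin}) bounds the length drop when switching from the finite to the transverse condition at $\ell$ by the top elementary divisor of the current Selmer group. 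Since $d_1 \geq d_2 \geq \cdots$, iterating $r$ times peels off at most $d_1 + \cdots + d_r$ from $\lambda(1, E[p^m]) = \sum_i d_i$, leaving at least $\sum_{i > r} d_i$.

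The matching upper bound is the decisive step. I would build $n = \ell_1\cdots \ell_r$ inductively: at stage $i$, having built $n_{i-1} = \ell_1 \cdots \ell_{i-1}$, pick a class $c \in \mathrm{Sel}_{\{p\}, n_{i-1}\text{-}\mathrm{tr}}(\mathbb{Q}, E[p^m])$ realizing a generator of the top cyclic summand (of order $p^{d_i}$), and use Chebotarev density theorem on the extension $\mathbb{Q}_T(\zeta_{p^m}, c)/\mathbb{Q}$ to produce $\ell_i \in \mathcal{P}_m$ whose Frobenius is conjugate to a suitably chosen $\tau$-like element \emph{and} for which $\mathrm{loc}_{\ell_i}(c)$ generates $\mathrm{H}^1_f(\mathbb{Q}_{\ell_i}, E[p^m]) \simeq \mathbb{Z}/p^m\mathbb{Z}$. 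Imposing the transverse condition at $\ell_i$ then quotients out precisely the cyclic summand generated by $c$, decreasing $\lambda$ by exactly $d_i$. Combining the two bounds with the reduction above yields the theorem.

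The main obstacle is the Chebotarev step in the upper bound: one must simultaneously control the Frobenius at $\ell_i$ so that $\ell_i \in \mathcal{P}_m$ \emph{and} the localization $\mathrm{loc}_{\ell_i}(c)$ is a generator of $\mathrm{H}^1_f(\mathbb{Q}_{\ell_i}, E[p^m])$ of order precisely $p^{d_i}$. The full strength of hypothesis (KS) --- in particular the vanishing $\mathrm{H}^1(\mathbb{Q}_T(\zeta_{p^\infty})/\mathbb{Q}, E[p]) = 0$ --- is what permits one to view Selmer classes as Galois-equivariant homomorphisms on $\mathrm{Gal}(\overline{\mathbb{Q}}/\mathbb{Q}_T(\zeta_{p^m}))$, so that Chebotarev can be applied to pick out a Frobenius detecting $c$ at the correct order. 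Without this vanishing one can only guarantee that each new $\ell_i$ decreases $\lambda$ by some amount bounded above by $d_i$, and the matching upper bound would collapse into the inequality already given by the lower-bound analysis.
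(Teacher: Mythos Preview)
Your proposal is correct and follows essentially the same strategy as the paper's proof: reduce via the preceding theorem to the identity $\min_{\nu(n)=r}\lambda(n,E[p^m])=\sum_{i>r}d_i$, prove the lower bound by controlling the kernel of the localization maps, and prove the upper bound by a Chebotarev argument that chooses primes $\ell_i$ killing one top elementary divisor at a time. One minor difference: for the lower bound the paper considers the single map $\bigoplus_{\ell\mid n}\mathrm{loc}_\ell\colon \mathrm{Sel}_{\{p\}}(\mathbb{Q},E[p^m])\to (\mathbb{Z}/p^m\mathbb{Z})^{\nu(n)}$ and notes that its kernel already lies in $\mathrm{Sel}_{\{p\},\,n\text{-}\mathrm{tr}}(\mathbb{Q},E[p^m])$, which gives $\lambda(n,E[p^m])\geq \sum_{i>r}d_i$ in one stroke; your inductive ``peel off one divisor at a time'' argument reaches the same conclusion but requires tracking the elementary divisors of the intermediate transverse Selmer groups, which is a small extra bookkeeping step you should make precise.
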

\begin{proof}
This is easy to get the equality when $r = 0$.
When $r > 0$, consider the map
$$\bigoplus_{\ell \vert n}  \mathrm{loc}_\ell : \mathrm{Sel}_{\lbrace p \rbrace}(\mathbb{Q}, E[p^m]) \to \bigoplus_{\ell \vert n}  E(\mathbb{Q}_\ell) \otimes \mathbb{Z}/p^m\mathbb{Z}\simeq (\mathbb{Z}/p^m\mathbb{Z})^{\oplus \nu(n)} .$$
Then the kernel of this map is contained in
$$\mathrm{Sel}_{\lbrace p \rbrace,  n\textrm{-}\mathrm{tr}} (\mathbb{Q}, E[p^m]) .$$
Therefore, $\geq$ is done, so it suffices to prove the opposite inequality.
Using Chebotarev density theorem, we can choose a prime $\ell \in \mathcal{P}_m$
such that
\begin{itemize}
\item $\mathrm{Sel}_{\lbrace p , \ell \rbrace}(\mathbb{Q}, E[p^m]) \simeq \bigoplus_{i \geq 2} \mathbb{Z}/p^{d_i} \mathbb{Z}$, and
\item $\mathrm{Sel}_{\lbrace p, \ell \rbrace}(\mathbb{Q}, E[p^m]) = \mathrm{Sel}_{\lbrace p \rbrace,  \ell\textrm{-}\mathrm{tr}}(\mathbb{Q}, E[p^m])$.
\end{itemize}
Then use induction on $\nu(n)$.
It is remarkable that this process kills a generator in each step.
\end{proof}

\section{Lecture 4: A sketch of the construction of Beilinson--Kato's zeta elements}
This last lecture on the construction may be the most painful part of this lecture series although we give a sketch only, give no proof, and do not check the norm compatibility.
\begin{itemize}
\item It would not be very important to you if you mainly concern the \emph{applications} of Kato's Euler systems.
\item It must be very important to you if you are interested in the construction of \emph{new} Euler systems based on Kato's construction (e.g. Loeffler--Zerbes' program).
\end{itemize}
When I first try to read \cite{kato-euler-systems}, I was quite confused with the notation
\[
\xymatrix{
{}_{c,d} z^{(p)}_m (f, r,r', \xi, S) , & \mathbf{z}^{(p)}_\gamma .
}
\]
Of course,  $f$ comes from the modular form and $m$ comes from $\mathbb{Q}(\zeta_m)$.
However, where do all the other ``variables" come from?
We need to understand the geometry of modular curves and specific functions on them.
\begin{caveat} \label{caveat:universal-elliptic-curve}
In this last lecture, the notation $E$ is used for the universal elliptic curve over modular curves, not for elliptic curves over $\mathbb{Q}$ mentioned in former lectures.
\end{caveat}
\subsection{Siegel units and zeta elements}
\subsubsection{Modular forms and elliptic curves} We follow \cite[$\S$1.4]{kato-euler-systems}.
Let $N \geq 3$ be an integer and $Y(N)$ the modular curve over $\mathbb{Q}$ of level $N$ without cusps representing the functor
\[
\xymatrix@R=0em{
\underline{\mathrm{Sch}}_{/\mathbb{Q}} \ar[r] & \underline{\mathrm{Sets}} \\
S  \ar@{|->}[r] & \textrm{the isomorphism classes of } (E, e_1, e_2)
}
\]
where 
\begin{itemize}
\item $\underline{\mathrm{Sch}}_{/\mathbb{Q}}$ is the category of schemes over $\mathbb{Q}$ and $\underline{\mathrm{Sets}}$ is the category of sets,  and
\item $E$ is an elliptic curve over $S$,  and $(e_1, e_2)$ is a pair of sections of $E/S$ and it forms a $\mathbb{Z}/N\mathbb{Z}$-basis of $E[N]$.
\end{itemize}
This is a smooth irreducible affine curve, and the constant field of $Y(N)$ is $\mathbb{Q}(\zeta_N)$, not $\mathbb{Q}$. 
Denote by $X(N) \supseteq Y(N)$ the smooth compactification of $Y(N)$.

Let $n$ be an integer with $(n,N) = 1$. By using the finite \'{e}tale surjective map
\[
\xymatrix@R=0em{
Y(Nn) \ar@{->>}[r] & Y(N) \\
(E, e_1, e_2) \ar@{|->}[r] & (E, n \cdot e_1, n \cdot e_2) ,
}
\]
we have inclusion 
$\mathcal{O}(Y(N)) \subseteq \mathcal{O}(Y(Nn))$
via the pull-back of this map.

\subsubsection{Theta functions}
The \emph{theta function} in the following proposition is the very starting point of the construction.
\begin{prop} \label{prop:theta-functions}
Let $E$ be an elliptic curve over a scheme $S$.
Let $c$ be an integer with $(c, 6)  =1$.
\begin{enumerate}
\item There exists a unique ${}_c\theta_E \in \mathcal{O}(E \setminus E[c])^\times$
such that
\begin{enumerate}
\item $\mathrm{Div}  ( {}_c\theta_E ) = c^2 \cdot (0) - E[c]$ on $E$
\item $\mathrm{N}_a({}_c\theta_E) = {}_c\theta_E$ for an integer $a$ with $(a, c) = 1$
where $\mathrm{N}_a$ is the norm map $\mathrm{N}_a : \mathcal{O}(E \setminus E[ac])^\times \to \mathcal{O}(E \setminus E[c])^\times$
associated to the pull-back homomorphism by the multiplication by $a$.
\end{enumerate}
\item Let $d$ be another integer with $(d,6) = 1.$ Then
$$({}_d\theta_E )^{c^2} \cdot (c^*( {}_d\theta_E ) )^{-1} =  ({}_c\theta_E )^{d^2} \cdot (d^* ( {}_c\theta_E ) )^{-1}$$
in $\mathcal{O}(E \setminus E[ac])^\times$
where the maps
\begin{align*}
c^* & : \mathcal{O}(E \setminus E[d])^\times \to \mathcal{O}(E \setminus E[cd])^\times, \\
d^* &  : \mathcal{O}(E \setminus E[c])^\times \to \mathcal{O}(E \setminus E[cd])^\times
\end{align*}
are the pull-back homomorphisms by the multiplication by $c$ and $d$, respectively.
\item Over $\mathbb{C}$,  ${}_c\theta_E$ can be written more explicitly.
Let $\mathfrak{h}$ be the upper-half plane.
Let $\tau \in \mathfrak{h}$,  $z \in \mathbb{C} \setminus c^{-1} (\mathbb{Z}\tau + \mathbb{Z})$.
Then ${}_c\theta(\tau, z)$ is the value at $z$ of ${}_c\theta_{\mathbb{C}/(\mathbb{Z}\tau + \mathbb{Z})}$ over $\mathbb{C}$.
Therefore, we have the explicit formula
$${}_c\theta(\tau, z) = q^{\frac{1}{12} \cdot (c^2-1)} \cdot (-t)^{\frac{1}{2} \cdot (c- c^2)} \cdot \gamma_q(t)^{c^2} \cdot  \gamma_q(t^c)^{-1}$$
where $q = e^{2 \pi i \tau}$, $t = e^{2 \pi i z}$, and
$$\gamma_q(t) = \prod_{n \geq 0} (1 - q^n \cdot t) \cdot \prod_{n \geq 1} (1 - q^n \cdot t^{-1}) .$$ 
\item If $h : E \to E'$ is an isogeny of elliptic curve of degree prime to $c$, then
we have the corresponding norm map
$h_*$ sends ${}_c\theta_E$ to ${}_c\theta_{E'}$.
\end{enumerate}
\end{prop}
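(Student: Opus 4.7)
The plan is to establish (1) first and then deduce (2), (3), (4) using the uniqueness in (1). For (1), I would begin by observing that the divisor $D = c^2(0) - E[c]$ on each geometric fiber of $E/S$ has degree $0$ and sums to $0$ under the group law (the sum vanishes because $E[c]\cong (\mathbb{Z}/c\mathbb{Z})^2$ and $c$ annihilates this group), so by Abel's theorem there is fiberwise a function with divisor $D$, unique up to an invertible scalar. To globalize, I would work universally over the moduli scheme of elliptic curves with full level-$c$ structure over $\mathbb{Z}[1/c]$, where $E[c]$ decomposes into disjoint sections, construct the function via the Abel--Jacobi map there, and descend to $S$ along the $\mathrm{GL}_2(\mathbb{Z}/c)$-torsor. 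Uniqueness and the pinning down of the scalar proceed together: if $f$ has the same divisor as ${}_c\theta_E$, then $u = f/{}_c\theta_E$ is a unit on $E$ and therefore lies in $\mathcal{O}(S)^\times$; since $[a]$ has degree $a^2$, one has $\mathrm{N}_a(u) = u^{a^2}$, and imposing $\mathrm{N}_a f = f$ together with the normalization $\mathrm{N}_a({}_c\theta_E) = {}_c\theta_E$ forces $u^{a^2-1} = 1$. Choosing two values of $a$ coprime to $c$ with suitably coprime $a_i^2-1$ then gives $u=1$; that a normalized representative exists in the first place can be verified by averaging an initial choice over translates.

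For (3), I would verify by a direct computation that the right-hand side of the displayed formula defines a function on $\mathbb{C}/(\mathbb{Z}\tau+\mathbb{Z}) \setminus c^{-1}(\mathbb{Z}\tau+\mathbb{Z})$ with divisor $c^2(0)-E[c]$: the prefactors $q^{(c^2-1)/12}$ and $(-t)^{(c-c^2)/2}$ are exactly the cocycles needed to turn the \emph{a priori} only quasi-periodic ratio $\gamma_q(t)^{c^2}\gamma_q(t^c)^{-1}$ into a genuine elliptic function, which is the classical Jacobi/Siegel calculation on the product for $\gamma_q$. The $\mathrm{N}_a$-invariance is then checked directly from the product expansion, and uniqueness from (1) identifies the expression with the analytic incarnation of ${}_c\theta_E$.

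Parts (2) and (4) follow from (1) by matching divisors and norm relations on two candidates. For (2), both $({}_d\theta_E)^{c^2}(c^*{}_d\theta_E)^{-1}$ and its $(c,d)$-swap lie in $\mathcal{O}(E\setminus E[cd])^\times$, have divisor $c^2d^2(0) - c^2 E[d] - d^2 E[c] + E[cd]$ (using $[c]^{-1}E[d] = E[cd]$ when $(c,d)=1$), and are $\mathrm{N}_a$-invariant for $a$ coprime to $cd$; a minor variant of the uniqueness argument, applied on $E\setminus E[cd]$, identifies them. For (4), since $(\deg h, c) = 1$ one has $\ker h \cap E[c] = 0$, so $h$ restricts to an isomorphism $E[c] \xrightarrow{\sim} E'[c]$; hence $h_*({}_c\theta_E)$ lies in $\mathcal{O}(E'\setminus E'[c])^\times$, has divisor $c^2(0_{E'})-E'[c]$, and inherits the norm compatibility from ${}_c\theta_E$ via the projection formula, so equals ${}_c\theta_{E'}$ by (1).

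The hard part, I expect, will be the global existence and simultaneous normalization in (1): fiberwise one has a function only up to a scalar, and turning this into a genuine section of $\mathcal{O}(E\setminus E[c])^\times$ over an arbitrary base $S$ while enforcing $\mathrm{N}_a = \mathrm{id}$ for \emph{every} $a$ coprime to $c$ requires controlling units on the universal elliptic curve (via the theorem of the cube) together with a descent along relations among these units. Once this is set up, all the remaining assertions of the proposition are essentially formal consequences of uniqueness.
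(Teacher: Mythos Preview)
The paper does not actually prove this proposition; its entire proof reads ``See \cite[Prop.~1.3 and \S1.10]{kato-euler-systems} and \cite[\S1.2]{scholl-kato}.'' Your sketch is essentially the argument one finds in those references: fiberwise principality of $c^2(0)-E[c]$ via Abel--Jacobi, rigidification of the scalar ambiguity through the norm relations (the hypothesis $(c,6)=1$ is exactly what lets you take $a=2,3$, so that $\gcd(2^2-1,3^2-1)=1$ forces the spurious unit to be $1$), and then (2)--(4) as formal consequences of the characterization established in (1). So your approach and the cited sources agree.

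One small remark on your treatment of (2): you invoke $[c]^{-1}E[d]=E[cd]$ with the caveat ``when $(c,d)=1$'', but the proposition does not assume coprimality, and in fact the caveat is unnecessary. Scheme-theoretically $[c]^{-1}(E[d])=\ker[cd]=E[cd]$ always, and the degrees match ($c^2\cdot d^2=(cd)^2$), so your divisor computation
\[
c^2d^2(0)-c^2E[d]-d^2E[c]+E[cd]
\]
for both sides goes through without any coprimality hypothesis. Otherwise your outline is correct and complete as a sketch; the only genuinely delicate point, as you correctly anticipate, is the global existence over an arbitrary base in (1), which in the references is handled via the universal case and the theorem of the cube.
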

\begin{proof}
See \cite[Prop. 1.3 and $\S$1.10]{kato-euler-systems} and \cite[$\S$1.2]{scholl-kato}.
\end{proof}

\subsubsection{Siegel units from theta functions}
Following \cite[$\S$1.4]{kato-euler-systems}, we review the construction of the Siegel units from the theta functions above
\[
\xymatrix{
{}_c g_{\alpha, \beta} \in \bigcup_{N} \mathcal{O}(Y(N))^\times , & g_{\alpha, \beta} \in \bigcup_{N} \mathcal{O}(Y(N))^\times \otimes \mathbb{Q}
}
\]
where
$(\alpha, \beta) \in (\mathbb{Q}/\mathbb{Z})^{\oplus 2} \setminus \lbrace (0,0) \rbrace$
and $(c, 6 \cdot \mathrm{ord}(\alpha) \cdot \mathrm{ord}(\beta)) = 1$.
Note that the former one is integral, but the latter allows denominators.
If $N \cdot \alpha = N \cdot \beta = 0$, then
\[
\xymatrix{
{}_c g_{\alpha, \beta} \in \mathcal{O}(Y(N))^\times , & g_{\alpha, \beta} \in \mathcal{O}(Y(N))^\times \otimes \mathbb{Q} .
}
\]
In this situation, we have relation
$${}_c g_{\alpha, \beta} = ( g_{\alpha, \beta} )^{c^2} \cdot ( g_{c \alpha, c\beta} )^{-1}
\in \mathcal{O}(Y(N))^\times \otimes \mathbb{Q} .$$
How to produce these elements?
Consider the universal elliptic curve
\[
\xymatrix{
E \ar[d]_-{\pi} \\
Y(N) \ar@/_1pc/[u]_-{e_1, e_2}
}
\]
where $\pi$ is the structure map and $N \geq 3$ is an integer satisfying $N \cdot \alpha = N \cdot \beta = 0$.
Write
$$( \alpha , \beta ) = (a/N  , b/N) \in (\frac{1}{N} \mathbb{Z}/\mathbb{Z})^{\oplus 2} \setminus \lbrace (0,0) \rbrace$$
with $a, b \in \mathbb{Z}$.
Consider the map
$$\iota_{\alpha, \beta} := a\cdot e_1 + b \cdot e_2 : Y(N) \to E \setminus E[c] \subseteq E .$$
Note that $\mathrm{im}(\iota_{\alpha, \beta}) \cap E[c] = \emptyset$
since $(c, \mathrm{ord}(\alpha) \cdot \mathrm{ord}(\beta)) = 1$ and $(\alpha, \beta) \neq (0,0)$.

Then we define
$${}_c g_{\alpha, \beta} := \iota^*_{\alpha, \beta} ( {}_{c}\theta_E ) \in \mathcal{O}(Y(N))^\times $$
and 
$$g_{\alpha, \beta} := {}_c g_{\alpha, \beta}  \otimes (c^2-1)^{-1}  \in \mathcal{O}(Y(N))^\times \otimes \mathbb{Q} $$
by taking an integer
$c$ satisfying 
\begin{itemize}
\item $(c,6) =1$,
\item $c \equiv 1 \pmod{N}$, and
\item $c \neq \pm 1$.
\end{itemize}
The resulting $g_{\alpha, \beta}$ is independent of such $c$ and is also independent of $N$.
\begin{rem}
The reader should check \cite[Rem. 1.5]{kato-euler-systems} for the important intuition on Siegel units and theta functions, which is not true logically. 
\end{rem}
\subsubsection{Group action on $Y(N)$}
We follow \cite[$\S$1.6]{kato-euler-systems}.
Consider the following (left) action $\mathrm{GL}_2(\mathbb{Z}/N\mathbb{Z})$ on $Y(N)$.
$$\sigma = 
\begin{pmatrix}
a & c \\ b & d
\end{pmatrix}  : (E, e_1, e_2) \mapsto (E, e'_1, e'_2)$$
where
$$\begin{pmatrix}
e'_1 \\
e'_2
\end{pmatrix}
=
\begin{pmatrix}
a & b \\ c & d
\end{pmatrix}
\cdot
\begin{pmatrix}
e_1 \\ e_2
\end{pmatrix}$$
and
$\sigma (\zeta_N) = \zeta^{\mathrm{det}(\sigma)}_N$
on the total constant field.
\subsubsection{A little bit of algebraic $K$-theory}
We recommend \cite{weibel-K-book} for the basic material on algebraic $K$-theory.

Let $X$ be a regular separate Noetherian scheme.
Then Quillen's $K$-groups $K_i(X)$ are defined for $i \geq 0$
with cup product 
$$\cup : K_i(X) \times K_j(X) \to K_{i+j}(X).$$
Two properties are important for our purpose.
\begin{enumerate}
\item there exists a canonical homomorphism
$$\mathcal{O}(X)^\times \to K_1(X).$$
\item there exists the universal (Steinberg) symbol map via cup product
$$\mathcal{O}(X)^\times \times \mathcal{O}(X)^\times \to K_2(X)$$
defined by sending a pair $(u, v)$ to the symbol $\lbrace u, v \rbrace$.
\end{enumerate}

\begin{rem}
\begin{enumerate}
\item When $X =\mathrm{Spec}(R)$ with commutative ring with unity $R$, we have
$$K_1(X) = K_1(R) = \mathrm{GL}(R) / [\mathrm{GL}(R), \mathrm{GL}(R)]$$
where 
$\mathrm{GL}(R) = \bigcup_{n \geq 1} \mathrm{GL}_n(R)$
under $\mathrm{GL}_n(R) \hookrightarrow \mathrm{GL}_{n+1}(R)$ defined by
$A \mapsto \begin{pmatrix}
A & 0  \\
0 & 1 
\end{pmatrix}$. In this situation, the first map is exactly
$$\mathcal{O}(X)^\times = \mathrm{GL}_1(R) \to \mathrm{GL}(R) \to K_1(X).$$
\item If $X = \mathrm{Spec}(k)$ for a field $k$, then
$$K_2(X) = K_2(k) = ( \wedge^2 k^\times ) / ( \lbrace u, 1-u \rbrace : u \in k^\times ) .$$
\end{enumerate}

\end{rem}

\subsubsection{More modular curves} \label{subsubsec:more-modular-curves}
We follow \cite[$\S$2.1]{kato-euler-systems}.
Let $M, N \geq 1$ be integers, and we now define $Y(M,N)$.
Choose a sufficiently big $L \geq 3$ such that $M$ divides $L$ and $N$ divides $L$.
Then we define
$$Y(M,N) = G \backslash Y(L), $$
which is independent of $L$,
where
$$G = \left\lbrace 
\begin{pmatrix}
 a & b \\ c & d
\end{pmatrix} 
\in \mathrm{GL}_2(\mathbb{Z}/L\mathbb{Z}) : \gamma \equiv 
\begin{pmatrix}
 1 & 0 \\ * & *
\end{pmatrix}
\pmod{M},
 \gamma \equiv 
\begin{pmatrix}
 * & * \\ 0 & 1
\end{pmatrix}
\pmod{N}
\right\rbrace .$$
Let $X(M,N) \supseteq Y(M,N)$ be the smooth compactification of $Y(M,N)$.
Suppose that $M+N \geq 5$. 
Then $Y(M,N)$ represents the functor
\[
\xymatrix@R=0em{
\underline{\mathrm{Sch}}_{/\mathbb{Q}} \ar[r] & \underline{\mathrm{Sets}} \\
S  \ar@{|->}[r] & \textrm{the isomorphism classes of } (E, e_1, e_2)
}
\]
where 
\begin{itemize}
\item $E$ is an elliptic curve over $S$,  and
\item $(e_1, e_2)$ is a pair of sections of $E/S$ such that $M \cdot e_1 = N \cdot e_2 = 0$ and the map $\mathbb{Z}/M\mathbb{Z} \times \mathbb{Z}/N\mathbb{Z} \to E$ defined by sending $(a,b) \mapsto a \cdot e_1 + b  \cdot e_2$ is injective.
\end{itemize}
Since $M$ divides $L$ and $N$ divides $L$, we have natural map
\[
\xymatrix@R=0em{
Y(L) \ar[r] & Y(M,N) \\
(E, e_1, e_2)  \ar@{|->}[r] & (E, (L/M)\cdot e_1, (L/N)\cdot e_2) .
}
\]
\subsubsection{Zeta elements in $K_2$, zeta functions, and Beilinson's theorem}
We follow \cite[$\S$2.2]{kato-euler-systems}.
We produce one zeta element in $K_2$ of $Y(M,N)$ from two Siegel units as follows.

Let $M, N \geq 2$ be integers such that $M+N \geq 5$.
Choose $c, d \in \mathbb{Z}_{\geq 1}$ such that $(c,6M) = 1$ and $(d,6N) = 1$. 
 The zeta elements are defined by
 \begin{align*}
 {}_{c,d} z_{M,N} & := \lbrace {}_c g_{1/M, 0}, {}_d g_{0, 1/N} \rbrace  \in K_2(Y(M,N)) , \\
z_{M,N} & := \lbrace  g_{1/M, 0},  g_{0, 1/N} \rbrace  \in K_2(Y(M,N)) \otimes \mathbb{Q} 
 \end{align*}
satisfying certain norm compatibilities \cite[Props. 2.3 and 2.4]{kato-euler-systems}.

Then we can define the corresponding zeta function  \cite[$\S$2.5]{kato-euler-systems}, which is the Hecke operator version of the zeta function associated to the zeta element $z_{M,N}$
$$Z_{M,N}(s) : \lbrace s \in \mathbb{C} : \mathrm{Re}(s) > 2 \rbrace \to \mathrm{End}_{\mathbb{C}}( \mathrm{H}^1(Y(M,N)(\mathbb{C}), \mathbb{C}) )$$
defined by
$$Z_{M,N}(s)= \sum_{n \geq 1, (n,M)=1} T'(n) \cdot \begin{pmatrix}
1/n & 0 \\ 0 & 1
\end{pmatrix}^* \cdot n^{-s}$$
where $T'(n)$ is the dual Hecke operator acting on $\mathrm{H}^1(Y(M,N)(\mathbb{C}), \mathbb{C})$ and 
$\begin{pmatrix}
1/n & 0 \\ 0 & 1
\end{pmatrix}^*$ is the pull-back by the action of $\begin{pmatrix}
1/n & 0 \\ 0 & 1
\end{pmatrix}$.

Recall the isomorphism coming from Poincar\'{e} duality
\[
\xymatrix@R=0em{
\mathrm{H}_1(X(M,N)(\mathbb{C}), \lbrace\mathrm{cusps}\rbrace, \mathbb{Z}) \otimes \mathbb{C} \ar[r] & \mathrm{H}^1(Y(M,N)(\mathbb{C}), \mathbb{C}) \\
(0, \infty) \ar@{|->}[r] & \delta_{M,N} 
}
\]
where the homology class $(0, \infty) \in \mathrm{H}_1(X(M,N)(\mathbb{C}), \lbrace\mathrm{cusps}\rbrace, \mathbb{Z})$
is represented by the geodesic from 0 to $\infty$ on the upper half plane.
See \cite[$\S$2.7]{kato-euler-systems} for details.
\begin{thm}[Beilinson] \label{thm:beilinson}
There exists the regulator map
$$\mathrm{reg}_{M,N} : K_2(Y(M,N)) \to \mathrm{H}^1(Y(M,N)(\mathbb{C}), \mathbb{R}\cdot i)$$
sending $z_{M,N}$
to
$$\lim_{s \to 0} \dfrac{1}{s} \cdot Z_{M,N}(s) \cdot 2 \pi i \cdot \delta^-_{M,N}$$
\end{thm}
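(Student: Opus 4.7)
The plan is to unwind the regulator pairing explicitly using the formula of Bloch--Beilinson and then recognize the resulting integral as a Rankin--Selberg convolution against the dual Hecke algebra.

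First I would recall that for a smooth affine curve $Y$ over $\mathbb{C}$, the regulator on $K_2(Y)$ is realized by the formula
\[
\mathrm{reg}(\{u,v\}) = \log|u| \cdot d\arg(v) - \log|v|\cdot d\arg(u)
\]
(viewed in $\mathrm{H}^1(Y(\mathbb{C}),\mathbb{R}\cdot i)$ after the appropriate projection), so that pairing $\mathrm{reg}_{M,N}(z_{M,N})$ against the geodesic class $(0,\infty)$ reduces to a line integral on the upper half plane $\mathfrak{h}$ of the $1$-form built from the two Siegel units $\,{}_{c}g_{1/M,0}$ and $\,{}_{d}g_{0,1/N}$. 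The content of the theorem is then a concrete evaluation of this integral.

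Next I would use the classical Kronecker limit / Siegel formula to express $-\log|g_{\alpha,\beta}(\tau)|$ as a real-analytic Eisenstein series $E_{\alpha,\beta}(\tau,s)$ at $s=1$, and $d\arg(g_{\gamma,\delta})$ as the holomorphic Eisenstein series of weight $2$ attached to the cusp data $(\gamma,\delta)$ (equivalently, the logarithmic derivative of a Siegel unit is an Eisenstein series of weight $2$). Substituting $(\alpha,\beta)=(1/M,0)$ and $(\gamma,\delta)=(0,1/N)$, the integrand for $\mathrm{reg}_{M,N}(z_{M,N})$ paired with $(0,\infty)$ becomes the product of a weight-$0$ real-analytic Eisenstein series and a weight-$2$ holomorphic Eisenstein series integrated along the imaginary axis. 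This is precisely the Rankin--Selberg integral setup, and I would compute it by unfolding: the resulting integral becomes a Dirichlet series over $n$ coprime to $M$, with the exact shape
\[
\sum_{n\geq 1,\,(n,M)=1} T'(n)\cdot \begin{pmatrix} 1/n & 0 \\ 0 & 1\end{pmatrix}^{\!*}\! \cdot n^{-s}
\]
applied to the fundamental class $\delta_{M,N}$, so one obtains $Z_{M,N}(s)\cdot 2\pi i\cdot \delta^-_{M,N}$ in the limit $s\to 0$; the sign projection to $\delta^-$ arises because only the odd part of the cycle $(0,\infty)$ survives against the imaginary $(1,0)$-part $\mathbb{R}\cdot i$ of the regulator.

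Finally, I would handle the auxiliary integers $c,d$ by noting that ${}_cg_{1/M,0}=g_{1/M,0}^{c^2}\cdot g_{c/M,0}^{-1}$ rationally and similarly for the other factor, so the formula first proved in $K_2\otimes\mathbb{Q}$ with $g_{1/M,0},g_{0,1/N}$ transfers to the integral classes up to the factor $(c^2-\sigma_c)(d^2-\sigma_d)$ that is already built into the normalization; since the statement of Theorem \ref{thm:beilinson} is given in terms of $z_{M,N}$ (the version with denominators), no additional bookkeeping is needed at this stage. The main obstacle I expect is the Rankin--Selberg unfolding step: carefully identifying the Fourier coefficients of the product of Eisenstein series with the action of $T'(n)\cdot \bigl(\begin{smallmatrix}1/n & 0\\ 0 & 1\end{smallmatrix}\bigr)^{*}$ on $\delta_{M,N}$, and tracking the precise cusp-by-cusp contributions so that the Euler factors removed by the condition $(n,M)=1$ match exactly, is the delicate combinatorial heart of the argument; all other steps are essentially formal manipulations of Eisenstein series and the Poincar\'e duality pairing.
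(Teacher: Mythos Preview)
The paper itself does not prove this theorem; it simply refers to \cite[Thm.~2.6 and \S7]{kato-euler-systems} and \cite[\S5]{beilinson-higher-regulators}. Your outline is broadly the strategy carried out in those references: the Bloch--Beilinson regulator formula for symbols, the Kronecker limit formula expressing $\log|g_{\alpha,\beta}|$ via real-analytic Eisenstein series, the identification of $\mathrm{dlog}$ of a Siegel unit with a weight-$2$ holomorphic Eisenstein series, and a Rankin--Selberg computation. So in spirit you are on the right track.

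There is, however, a genuine confusion in your setup that would block the argument as written. The assertion is an equality of classes in $\mathrm{H}^1(Y(M,N)(\mathbb{C}),\mathbb{R}\cdot i)$, so it must be verified after pairing against a spanning set of $\mathrm{H}_1$, equivalently against all cusp forms via Eichler--Shimura and the Petersson product. That pairing is a \emph{surface} integral over a fundamental domain,
\[
\int_{\Gamma\backslash\mathfrak{h}} \mathrm{reg}(\{u,v\})\wedge \overline{f(\tau)\,d\tau},
\]
and it is here that Rankin--Selberg unfolding applies, producing the $L$-series which matches the action of $Z_{M,N}(s)$ on the $f$-component of $\delta^-_{M,N}$. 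What you describe instead is the \emph{line} integral of the regulator $1$-form along the geodesic $(0,\infty)$; that computes only the single pairing $\langle \mathrm{reg}(z_{M,N}),\delta_{M,N}\rangle$, a number, not a cohomology class, and ``unfolding'' has no obvious meaning for a one-dimensional integral. One could in principle recover the full statement from the single pairing by invoking Hecke equivariance of the regulator together with Manin's theorem that Hecke translates of $(0,\infty)$ span the relative homology, but you do not set this up, and it is not how \cite[\S7]{kato-euler-systems} proceeds. Replace the line-integral framing with the Petersson pairing against cusp forms and your sketch becomes the standard proof.
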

\begin{proof}
See \cite[Thm. 2.6 and $\S$7]{kato-euler-systems} and \cite[$\S$5]{beilinson-higher-regulators}.
\end{proof}
\begin{rem}
Beilinson's theorem relates $z_{M,N}$ to \emph{non-critical} values of $L$-functions.
In Kato's construction, he used the Chern character map and the Soul\'{e} twist to $z_{M,N}$ to obtain the arithmetic information of all the \emph{critical} $L$-values.
\end{rem}
Up to now, we concern the elements on $Y(M,N)$ over $\mathbb{Q}$.
However, in order to have Euler systems in the sense of the third lecture, we need to construct certain elements on $Y_1(N)$ over $\mathbb{Q}(\zeta_m)$.

\subsection{Euler systems on $X_1(N)$ over $\mathbb{Q}(\zeta_m)$: $(\xi, S)$} \label{subsec:changing-modular-curves}
We briefly summarize how the elements in $K_2(Y(M,N))$ map to the elements in $K_2(Y_1(N) \otimes \mathbb{Q}(\zeta_m))$.
There are two types of zeta elements in terms of $(\xi, S)$:
\begin{enumerate}
\item $\xi = a(A)$ and $S = \lbrace \ell : \ell \vert mA \rbrace$ 
where $a(A)$ is just a symbol with $a \in \mathbb{Z}$ and $A \in \mathbb{Z}_{\geq 1}$.
\item $\xi \in \mathrm{SL}_2(\mathbb{Z})$ and $S = \lbrace \ell : \ell \vert mN \rbrace$.
\end{enumerate}
\begin{rem}
The zeta elements coming from (1) recover the bad Euler factors, and the zeta elements coming from (2) are useful for the integrality. 
\end{rem}
From $(\xi, S)$, we sketch the construction of the zeta element (the ``without $(c,d)$" version)
$$z_{1, N,m} (\xi, S) \in K_2(Y_1(N) \otimes \mathbb{Q}(\zeta_m)) \otimes \mathbb{Q}$$
as follows.
\subsubsection{The first case}
Choose $M \geq 1$, $L \geq 4$ such that $mA \vert M$, $N \vert L$, and $M \vert L$.
Let
$S = \lbrace \ell : \ell \vert M \rbrace$, and
$L = \lbrace \ell : \ell \in S \textrm{ or } \ell \vert N \rbrace$.
Consider the map on the upper-half plane
$\mathfrak{h} \to \mathfrak{h} $ defined by
sending $\tau$ to $A^{-1}(\tau +a)$.
This map is compatible with the following natural map
$$Y(M,L) \to Y_1(N) \otimes \mathbb{Q}(\zeta_m) .$$
The corresponding norm map on $K$-groups 
$$t_{m, a(A)} : K_2(Y(M,L)) \otimes \mathbb{Q} \to K_2(Y_1(N) \otimes \mathbb{Q}(\zeta_m)) \otimes \mathbb{Q}$$
is defined and it sends $z_{M,L}$ to $z_{1, N,m}(a(A), S)$.
Furthermore, $z_{1, N,m}(a(A), S)$ is independent of $M$ and $L$.

\subsubsection{The second case}
Choose $L \geq 3$ such that $m \vert L$, $N \vert L$, and let $S = \lbrace \ell : \ell \vert L \rbrace$.
Consider the canonical projection
$Y(L) \to Y_1(N) \otimes \mathbb{Q}(\zeta_m)$,
and the corresponding norm map on $K$-groups
$$K_2(Y(L)) \otimes \mathbb{Q} \to K_2(Y_1(N)\otimes \mathbb{Q}(\zeta_m)) \otimes \mathbb{Q}$$
is defined and it sends $\xi^*(z_{L,L})$ to $z_{1,N,m}(\xi, S)$.
As before, $z_{1,N,m}(\xi, S)$ is also independent of $L$.

\subsubsection{How about their integral versions?}
There exist the ``with $(c,d)$" versions of the above construction and these are integral, so $\otimes \mathbb{Q}$ can be removed.

\subsection{A summary of the construction and so what?}
\subsubsection{}
We recall some parts of $\S$\ref{subsec:rough-picture} elementwisely.
\[
\xymatrix{
{}_c\theta_E \ar@{|->}[r] \ar@{|->}[d]_-{``\mathrm{dlog}"} & \textrm{Siegel units} \ar@{|->}[r]^-{\cup} & { \substack{ \textrm{cup product of } \\ \textrm{two Siegel units in} \\  K_2(Y(M,N)) } }  \ar@{|->}[d]^-{(*)} \\
\textrm{Eisenstein series} \ar@{|->}[dr]_-{\textrm{product}} & & { \substack{ \textrm{zeta elements in} \\  \mathrm{H}^1(\mathbb{Q}, \mathrm{H}^1_{\et}(Y(M,N), \mathbb{Z}_p)(k-r)) } }  \ar@{|->}[dl]^-{ {\substack{\textrm{dual exponential} \\ \textrm{\cite[Thm. 9.7]{kato-euler-systems}}} } } \\
&  { \substack{ \textrm{zeta modular forms} \\  = \ \textrm{product of  two Eisenstein series} } }  \ar@{|->}[d]_-{\textrm{Rankin--Selberg}} \\
& \textrm{zeta value formula} 
}
\]
where
\begin{itemize}
\item  $``\mathrm{dlog}"$ is an explicit formula involving logarithmic derivatives as in (\ref{eqn:eisenstein-series-siegel-units}) (see \cite[\S3.2]{kato-euler-systems} for details), and
\item  $(*)$ involves the Chern character map $\mathrm{Ch}_{M,N}(k, r, r')$ \cite[(8.4.3)]{kato-euler-systems},
Hochschild--Serre spectral sequence, and the Soul\'{e} twist. The parameters $(r, r')$ are chosen in this process.
\end{itemize}
In order to have a genuine Euler system, we need to rewrite everything with $Y(M,N)$ by that with $Y_1(N) \otimes \mathbb{Q}(\zeta_m)$.
The projection to an eigenform $f$ can be taken by the quotient by the height one prime ideal associated to $f$ of the Hecke algebra and it yields the zeta modular form for $f$.

\subsubsection{}
We are now able to explain the relations among ${}_{c,d} z^{(p)}_m (f, r,r', \xi, S)$, $\mathbf{z}^{(p)}_\gamma$, and $z_{\mathbb{Q}(\zeta_n)}$.

In the element ${}_{c,d} z^{(p)}_m (f, r,r', \xi, S)$, the parameters come from the following choices.
\begin{itemize}
\item the choice of $(c, d)$ comes from the choice of two theta functions (Proposition \ref{prop:theta-functions}),
\item the choice of $(r, r')$ comes from the Chern character map $\mathrm{Ch}_{M,N}(k,r,r')$ in \cite[(8.4.3)]{kato-euler-systems}, and
\item the choice of $(\xi, S)$ comes from the maps from  $Y(M,N)$  to  $Y_1(N) \otimes \mathbb{Q}(\zeta_m)$ in $\S$\ref{subsec:changing-modular-curves}.
\end{itemize}
The element $\mathbf{z}^{(p)}_\gamma$ is defined over $\mathbb{Q}(\zeta_{p^\infty})$ and is explicitly built out from
$${}_{c,d} \mathbf{z}^{(p)}_{p^n} (f, r,r', \xi, S) = \varprojlim_n {}_{c,d} z^{(p)}_{p^n} (f, r,r', \xi, S) .$$
See \cite[$\S$13.9]{kato-euler-systems} for the explicit formula.
Here, $\gamma$ is an element of $V_f = V_pE(-1)$.
We now suppose that the image of $\rho$ contains the conjugate of $\mathrm{SL}_2(\mathbb{Z}_p)$.
Then $\mathrm{H}^1_{\mathcal{I}w}(\mathbb{Q}, T_f) = \varprojlim_{n} \mathrm{H}^1(\mathbb{Q}(\zeta_{p^n}), T_f)$
is free of rank one over $\mathbb{Z}_p\llbracket \mathrm{Gal}(\mathbb{Q}(\zeta_{p^\infty})/\mathbb{Q}) \rrbracket$.
Note that $\mathrm{H}^1_{\mathrm{Iw}}$ in \S\ref{subsubsec:iwasawa-cohomology} is slightly different from $\mathrm{H}^1_{\mathcal{I}w}$.
Furthermore, $\varprojlim_{n} \mathrm{H}^1(\mathbb{Q}(\zeta_{mp^n}), T_f)$
is still free over $\mathbb{Z}_p\llbracket \mathrm{Gal}(\mathbb{Q}(\zeta_{p^\infty})/\mathbb{Q}) \rrbracket$ for $m \geq 1$.
By using the argument of \cite[$\S$13.14]{kato-euler-systems}, we obtain an \emph{integral} Euler system and this construction with a suitable choice of $\gamma \in T_f$ gives us $z_{\mathbb{Q}(\zeta_n)}$ in the second lecture. See \cite[Appendix A]{kim-nakamura} for the detail. Note that the choice of $\gamma$ is equivalent to the choice of periods.

\subsection{Eisenstein series}
We follow \cite[\S3]{kato-euler-systems}.
The motto is 
\[
``\mathrm{dlog} (\textrm{Siegel units}) = \textrm{Eisenstein series}."
\]
Remind that $E$ means the universal elliptic curve over the modular curve in this last lecture (Caveat \ref{caveat:universal-elliptic-curve}).
\subsubsection{Setup}
Consider the diagram
\[
\xymatrix{
E \ar[r] \ar[d]^-{\pi} & \overline{E} \ar[d]^-{\pi} \\
Y(N) \ar[r] & X(N)
}
\]
where $\overline{E}$ is smooth N\'{e}ron model of $E$ and $\pi$ in the right extends that in the left. 
Consider an invertible $\mathcal{O}_{X(N)}$-module
$$\mathrm{coLie}(\overline{E}) = \pi_*( \Omega^1_{\overline{E}/X(N)} ) (``=\omega") .$$
Then
the spaces of modular forms/cuspforms have the following geometric descriptions, respectively:
\begin{align*}
M_k(X(N)) & = \Gamma(X(N), \mathrm{coLie}(\overline{E})^{\otimes k} ) \\
& = \Gamma(X(N), \mathrm{coLie}(\overline{E})^{\otimes k-2} \otimes_{\mathcal{O}(X(N))} \Omega^1_{X(N)/\mathbb{Q}}(\mathrm{log}(\mathrm{cusps})))
\end{align*}
where $\Omega^1_{X(N)/\mathbb{Q}}(\mathrm{log}(\mathrm{cusps}))$ is the space of differential 1 forms with logarithmic poles at cusps,
and
\[
S_k(X(N)) = \Gamma(X(N), \mathrm{coLie}(\overline{E})^{\otimes k-2} \otimes_{\mathcal{O}(X(N))} \Omega^1_{X(N)/\mathbb{Q}}) .
\]
For a positive integer $c$ with $(c, 6N)=1$,
we define Eisenstein series ${}_c E^{(k)}_{\alpha, \beta} \in M_k(X(N))$ where
$k \geq 1$ is an integer and $(\alpha, \beta) \in (\mathbb{Z}/N\mathbb{Z})^{\oplus 2}$.

We now construct the product of Eisenstein series explicitly.
Let $r \in \mathbb{Z}$.
Consider the map
$$D : \pi^*\mathrm{coLie}(E)^{\otimes r} \to \pi^*\mathrm{coLie}(E)^{\otimes r+1}$$
defined by sending
$f \otimes \omega^{\otimes r}$ to 
$\dfrac{df}{\omega} \otimes \omega^{\otimes r+1}$
where
$\mathrm{coLie}(E) = \pi_*( \Omega^1_{E/Y(N)} )$
$\omega \in \mathrm{coLie}(E)$ is a local basis
$f \in \mathcal{O}_E$, and
$\dfrac{df}{\omega} \in \mathcal{O}_E$ is determined by
$$df = \dfrac{df}{\omega} \cdot \omega \in \Omega^1_{E/Y(N)}.$$

Since
\begin{align*}
\mathrm{dlog} ( {}_c \theta_E ) & \in \Gamma(E \setminus E[c], \Omega^1_{E/Y(N)} ) \\
& = \Gamma(E \setminus E[c], \pi^*\mathrm{coLie}(E) ),
\end{align*}
we have
$$D^k( \mathrm{dlog} ( {}_c \theta_E ) ) \in \Gamma(E \setminus E[c], \pi^*\mathrm{coLie}(E)^{\otimes k} ) .$$
Let $(\alpha , \beta) \in (1/N \mathbb{Z} / \mathbb{Z})^{\oplus 2} \setminus \lbrace (0,0) \rbrace$, and $(c, 6N) = 1$.
Then we have Eisenstein series
\begin{equation} \label{eqn:eisenstein-series-siegel-units}
{}_c E^{(k)}_{\alpha, \beta} = \iota^*_{\alpha, \beta} \left( D^{k-1}  \mathrm{dlog} ( {}_c \theta_E )  \right) \in \Gamma(Y(N), \mathrm{coLie}(E)^{\otimes k} )
\end{equation}
where $\iota_{\alpha, \beta} = a \cdot e_1 + b \cdot e_2 : Y(N) \to E \setminus E[c]$ with
\[
\xymatrix{
E \ar[d] \\
Y(N) \ar@/_1pc/[u]_-{e_1, e_2}
}
\]
and $(\alpha, \beta) = (a/N, b/N)$.
In fact, ${}_c E^{(k)}_{\alpha, \beta} \in M_k(X(N))$.
\subsubsection{}
What if $(\alpha, \beta) = (0,0)$?
We define
$${}_c E^{(k)}_{0, 0} = (a^k - 1)^{-1} \cdot \sum'_{\alpha, \beta}  {}_c E^{(k)}_{\alpha, \beta} $$
where $\sum^{'}$ is the sum runs over all non-zero elements $(\alpha, \beta)$ of $(\frac{1}{a}\mathbb{Z}/\mathbb{Z})^{\oplus 2}$
Then ${}_c E^{(k)}_{0, 0}$ is independent of $a$ if $a \neq \pm 1$ and is prime to $c$.

\subsubsection{}
When $k=2$?
There exists a variant
$${}_c \widetilde{E}^{(2)}_{\alpha, \beta} \in M_2(X(N))$$
which can be described in terms of an algebraic version  of Weierstrass $\wp$-function.

\subsubsection{}
There is another Eisenstein series (without $(c,d)$-version)
$$F^{(k)}_{\alpha, \beta} 
=
\left\lbrace
\begin{array}{ll}
 N^{-k} \cdot \sum_{x,y \in \mathbb{Z}/N\mathbb{Z}} E^{(k)}_{x/N, y/N} \cdot \zeta^{bx-ay}_N & \textrm{ if } k \neq 2 , \\
 N^{-2} \cdot \sum_{x,y \in \mathbb{Z}/N\mathbb{Z}} \widetilde{E}^{(k)}_{x/N, y/N} \cdot \zeta^{bx-ay}_N & \textrm{ if } k=2 \textrm{ and } (\alpha, \beta)\neq (0,0).
\end{array} \right.
$$
\subsubsection{}
Let
\begin{itemize}
\item $1 \leq r, r' \leq k-1$ and one of them is $k-1$,
\item if $r = k-2$ and $r' = k-1$, then $M \geq 2$, and
\item $(r,r') \neq (2, k-1)$, $(k-1,2)$, or $(k-1, k-2)$.
\end{itemize}
We are now able to define \textbf{zeta modular forms}
$$z_{M,N} (k, r,r') \in M_k(X(M,N))$$
by 
$$z_{M,N} (k, r,r')
:=
\left\lbrace
\begin{array}{ll}
 \textrm{(some constant)} \cdot F^{(k-r)}_{1/M, 0} \cdot E^{(r)}_{0, 1/N} & \textrm{ if } r' = k-1, \\
 \textrm{(some constant)} \cdot E^{(k-r')}_{1/M, 0} \cdot E^{(r')}_{0, 1/N} & \textrm{ if } r = k-1.
\end{array} \right.$$
The explicit formulas for zeta modular forms can be found in \cite[Prop. 5.8]{kato-euler-systems}.
The notion of zeta modular forms is the starting point for the proof of the zeta value formula. See \cite[$\S$7]{kato-euler-systems}.

\section*{Brief remarks on further developments}
We conclude this article by mentioning two recent developments of Kato's Euler systems very briefly.
\subsection*{Structural refinements}
As we mentioned in the third lecture, the language of Kolyvagin systems provides Mazur--Rubin's structure theorem for $p$-strict Selmer groups as well as a mod $p$ criterion of the Iwasawa main conjecture.
Here, we briefly explain how to extend Mazur--Rubin's structure theorem to Kurihara's structure theorem for standard Selmer groups \cite{kurihara-iwasawa-2012,kurihara-munster,kurihara-analytic-quantities}.

Recall that
$\ell \in \mathcal{P}_k$ means $\ell \equiv 1 \pmod{p^k}$ and $a_\ell \equiv \ell +1 \pmod{p^k}$.
and $n \in \mathcal{N}_k$ is a square-free product of the primes in $\mathcal{P}_k$.
and for $n \in \mathcal{N}_k$, let
$I_n = \prod_{\ell \vert n} (\ell - 1, 1- a_\ell + \ell) \subseteq  p^k\mathbb{Z}_p$.
The \textbf{Kurihara number at $n$} is defined by
$$\widedelta_n = \sum_{a \in (\mathbb{Z}/n\mathbb{Z})^\times} \overline{\left[\dfrac{a}{n} \right]^+} \cdot \prod_{\ell \vert n} \overline{\mathrm{log}_{\eta_\ell}(a)} \in \mathbb{Z}_p/I_n\mathbb{Z}_p.$$
The collection of Kurihara numbers can be viewed as the ``Betti realization" of Kato's \emph{Kolyvagin} systems, and it is also an elliptic curve analogue of higher annihilators in \cite{aoki-ideal-class-groups}.
Under the non-triviality of the collection of Kurihara numbers, we are able to describe the structure of $\mathrm{Sel}(\mathbb{Q}, E[p^\infty])$ in terms of $\widedelta_n$'s. 
Also, the non-triviality of Kurihara numbers is directly related to Kato's main conjecture.
These results sometimes give us more arithmetic information than the Birch and Swinnerton-Dyer conjecture.
See \cite{sakamoto-p-selmer, kim-structure-selmer} for details.

\subsection*{Variation in families}
Beilinson--Kato's zeta elements live in the \'{e}tale cohomology of modular curves (before $\mathrm{exp}^*$) or the space of modular forms (after $\mathrm{exp}^*$).
In order to deform Beilinson--Kato's zeta elements in $p$-adic families, we evidently need to put the cohomology of modular curves and the space of modular forms into their $p$-adic families first.

There are a variety of works at different levels of generalities under different technical assumptions. We list some of them.
\begin{enumerate}
\item Hida families (slope zero): \cite{ochiai-deformation,delbourgo-book,fukaya-kato-sharificonj}.
\item Coleman families (finite slope): \cite{kazim-kato-in-families,wang-kato-in-families-1,wang-kato-in-families-2,benois-buyukboduk-beilinson-kato,benois-buyukboduk-exceptional-zeros,benois-buyukboduk-critical,hansen-critical-one}.
\item Deformation rings (no slope condition): \cite{nakamura-kato-deformation,colmez-wang}.
\end{enumerate}
The deformation of Beilinson--Kato's zeta elements essentially generalizes
the ``zeta morphism" (not the zeta element!)
$$T_f \to \mathrm{H}^1_{\mathcal{I}w}(\mathbb{Q}, T_f)$$
 sending $\gamma$ to $\mathbf{z}^{(p)}_\gamma$.
More precisely, the Galois representation $T_f$ in the map should be generalized to its deformation and then making the coherent and canonical choice of a suitable ``$\gamma$" in its deformation becomes a highly non-trivial task.

\section*{Acknowledgement}
This article is based on the online lecture series given by the author which is a part of the program ``Elliptic curves and the special values of $L$-functions (ONLINE)'' (2--7 August 2021). The lecture videos are available on \href{https://www.youtube.com/playlist?list=PL04QVxpjcnjhp20VSZQaQK9RnJjY7rp_C}{Youtube}.

The author deeply thanks the organizers Ashay Burungale, Haruzo Hida, Somnath Jha and Ye Tian, the enthusiastic and active participants, as well as the staff members of ICTS.

The author learned a lot about Kato's Euler systems from
 the lecture series given by Masato Kurihara at ``Postech Winter School 2012: The Birch and Swinnerton-Dyer conjecture" (January 2012), and
 from the lecture series given by Kentaro Nakamura and Shanwen Wang at ``An explicit week: on the recent development of Kato's Euler systems and explicit reciprocity laws" (February 2016).
The author is very thankful to them all for teaching him this beautiful subject.

The author was benefited from the detailed comments of Ruichen Xu.
The author also deeply thanks the referee for very detailed reading and for pointing out numerous inaccuracies in an earlier version. It led to significant improvements in the exposition.

\bibliographystyle{amsalpha}
\bibliography{library}

\newcommand{\etalchar}[1]{$^{#1}$}
\providecommand{\bysame}{\leavevmode\hbox to3em{\hrulefill}\thinspace}
\providecommand{\MR}{\relax\ifhmode\unskip\space\fi MR }
% \MRhref is called by the amsart/book/proc definition of \MR.
\providecommand{\MRhref}[2]{%
  \href{http://www.ams.org/mathscinet-getitem?mr=#1}{#2}
}
\providecommand{\href}[2]{#2}
\begin{thebibliography}{BCD{\etalchar{+}}14}

\bibitem[Aok05]{aoki-ideal-class-groups}
M.~Aoki, \emph{Notes on the structure of the ideal class groups of abelian
  number fields}, Proc. Japan Acad. Ser. A Math. Sci. \textbf{81} (2005),
  no.~5, 69--74.

\bibitem[BB22]{benois-buyukboduk-beilinson-kato}
D.~Benois and K.~B{\"{u}}y{\"{u}}kboduk, \emph{Interpolation of
  {B}eilinson--{K}ato elements and {$p$}-adic {$L$}-functions}, Ann. Math.
  Qu\'{e}bec \textbf{46} (2022), no.~2, 231--287.

\bibitem[BB23]{benois-buyukboduk-exceptional-zeros}
\bysame, \emph{On the exceptional zeros of {$p$}-non-ordinary {$p$}-adic
  {$L$}-functions and a conjecture of {P}errin-{R}iou}, Trans. Amer. Math. Soc.
  \textbf{376} (2023), no.~1, 231--284.

\bibitem[BB24]{benois-buyukboduk-critical}
\bysame, \emph{Arithmetic of critical {$p$}-adic {$L$}-functions}, Mem. Amer.
  Math. Soc., American {M}athematical {S}ociety, 2024, to appear.

\bibitem[BCD{\etalchar{+}}14]{bertolini-castella-darmon-dasgupta-prasanna-rotger}
Massimo Bertolini, Francesc Castella, Henri Darmon, Samit Dasgupta, Kartik
  Prasanna, and Victor Rotger, \emph{{$p$}-adic {$L$}-functions and {E}uler
  systems: a tale in two trilogies}, Automorphic forms and {G}alois
  representations. {V}ol. 1, London Math. Soc. Lecture Note Ser., vol. 414,
  Cambridge Univ. Press, Cambridge, 2014, pp.~52--101. \MR{3444223}

\bibitem[BCDT01]{bcdt}
C.~Breuil, B.~Conrad, F.~Diamond, and R.~Taylor, \emph{On the modularity of
  elliptic curves over {$\mathbb{Q}$}: wild 3-adic exercises}, J. Amer. Math.
  Soc. \textbf{14} (2001), no.~4, 843--939.

\bibitem[Bei85]{beilinson-higher-regulators}
A.A. Beilinson, \emph{Higher regulators and values of {$L$}-functions}, J.
  Soviet Math. (1985), 2036--2070.

\bibitem[Bel]{bellaiche-cmi-notes}
J.~Bella\"{i}che, \emph{Introduction to the conjecture of {B}loch--{K}ato},
  notes from Hawaii CMI summer school.
  \href{http://people.brandeis.edu/~jbellaic/}{http://people.brandeis.edu/~jbellaic/}.

\bibitem[Bel21]{bellaiche-book}
\bysame, \emph{The eigenbook--eigenvarieties, families of {G}alois
  representations, {$p$}-adic {$L$}-functions}, Pathw. Math., Birkh{\"{a}}user
  Cham, 2021.

\bibitem[BS21]{burns-sano-imrn}
D.~Burns and T.~Sano, \emph{On the theory of higher rank {E}uler, {K}olyvagin
  and {S}tark systems}, Int. Math. Res. Not. IMRN \textbf{2021} (2021), no.~13,
  10118--10206.

\bibitem[BSD63]{bsd-notes-1}
B.~J. Birch and H.~P.~F. Swinnerton-Dyer, \emph{Notes on elliptic curves. {I}},
  J. Reine Angew. Math. \textbf{212} (1963), 7--25.

\bibitem[BSD65]{bsd-notes-2}
\bysame, \emph{Notes on elliptic curves. {II}}, J. Reine Angew. Math.
  \textbf{218} (1965), 79--108.

\bibitem[BSS]{burns-sakamoto-sano-3}
D.~Burns, R.~Sakamoto, and T.~Sano, \emph{On the theory of higher rank {E}uler,
  {K}olyvagin and {S}tark systems, {III}: applications}, preprint,
  \href{https://arxiv.org/abs/1902.07002}{arXiv:1902.07002}.

\bibitem[BSS21]{burns-sakamoto-sano-2}
\bysame, \emph{On the theory of higher rank {E}uler, {K}olyvagin and {S}tark
  systems, {II}}, Int. Math. Res. Not. IMRN \textbf{2021} (2021), no.~13,
  10118--10206.

\bibitem[BT20]{burungale-tian-p-converse}
A.~A. Burungale and Y.~Tian, \emph{{$p$}-converse to a theorem of
  {G}ross--{Z}agier, {K}olyvagin and {R}ubin}, Invent. Math. \textbf{220}
  (2020), no.~1, 211--253.

\bibitem[B{\"{u}}y16]{kazim-kato-in-families}
K.~B{\"{u}}y{\"{u}}kboduk, \emph{Deformations of {K}olyvagin systems}, Ann.
  Math. Qu\'{e}bec \textbf{40} (2016), no.~2, 251--302.

\bibitem[Coa77]{coates-durham}
John Coates, \emph{{$p$}-adic {$L$}-functions and {I}wasawa's theory},
  Algebraic number fields: {$L$}-functions and {G}alois properties ({P}roc.
  {S}ympos., {U}niv. {D}urham, {D}urham, 1975), Academic Press, London-New
  York, 1977, pp.~269--353.

\bibitem[Coa20]{coates-iwasawa}
\bysame, \emph{Memories of {K}enkichi {I}wasawa}, Development of {I}wasawa
  theory---the centennial of {K}. {I}wasawa's birth, Adv. Stud. Pure Math.,
  vol.~86, Math. Soc. Japan, Tokyo, [2020] \copyright 2020, pp.~i--viii.

\bibitem[Col04]{colmez-p-adic-BSD}
P.~Colmez, \emph{La conjecture de {B}irch et {S}winnerton-{D}yer {$p$}-adique},
  Ast\'{e}risque \textbf{294} (2004), no.~ix, 251--319, S{\'{e}}minaire
  {BOURBAKI}, 55\`{e}me ann\'{e}e, 2002--2003, No.~919.

\bibitem[CW]{colmez-wang}
P.~Colmez and S.~Wang, \emph{Une factorisation de la cohomologie
  compl{\'{e}}t{\'{e}}e et du syst{\`{e}}me de {B}eilinson--{K}ato}, preprint.

\bibitem[CW77]{coates-wiles-bsd-1977}
J.~Coates and A.~Wiles, \emph{On the conjecture of {B}irch and
  {S}winnerton-{D}yer}, Invent. Math. \textbf{39} (1977), no.~3, 223--252.

\bibitem[Del08]{delbourgo-book}
D.~Delbourgo, \emph{Elliptic curves and big {G}alois representations}, London
  Math. Soc. Lecture Note Ser., vol. 356, Cambridge University Press, 2008.

\bibitem[DS05]{diamond-shurman}
F.~Diamond and J.~Shurman, \emph{A first course in modular forms}, Grad. Texts
  in Math., vol. 228, Springer-{V}erlag, 2005.

\bibitem[EPW]{epw2}
M.~Emerton, R.~Pollack, and T.~Weston, \emph{Explicit reciprocity laws and
  {I}wasawa theory for modular forms}, preprint,
  \href{https://arxiv.org/abs/2210.02013}{arXiv:2210.02013}.

\bibitem[FK24]{fukaya-kato-sharificonj}
T.~Fukaya and K.~Kato, \emph{On conjectures of {S}harifi}, Kyoto J. Math.
  \textbf{64} (2024), no.~2, 277--420.

\bibitem[Fuk03]{fukaya-coleman-power-series}
T.~Fukaya, \emph{Coleman power series for {$K_2$} and {$p$}-adic zeta functions
  of modular forms}, Doc. Math. (2003), 387--442, Extra {V}olume: {K}azuya
  {K}ato's {F}iftieth {B}irthday.

\bibitem[Gol82]{goldfeld-bsd}
D.~Goldfeld, \emph{Sur les produits partiels eul{\'{e}}riens attach{\'{e}}s aux
  courbes elliptiques}, C. R. Acad. Sci. Paris S\'{e}r. I Math. \textbf{294}
  (1982), no.~14, 471--474.

\bibitem[Gre99]{greenberg-lnm}
R.~Greenberg, \emph{Iwasawa theory for elliptic curves}, Arithmetic theory of
  elliptic curves ({C}etraro, 1997) (Berlin) (C.~Viola, ed.), Lecture Notes in
  Math., vol. 1716, Centro Internazionale Matematico Estivo (C.I.M.E.),
  Florence, Springer-Verlag, 1999, Lectures from the 3rd {C.I.M.E.}~{S}ession
  held in {C}etraro, {J}uly 12-–19, 1997, pp.~51–--144.

\bibitem[Gre01]{greenberg-intro}
\bysame, \emph{Introduction to {I}wasawa theory for elliptic curves},
  Arithmetic {A}lgebraic {G}eometry (B.~Conrad and K.~Rubin, eds.), IAS/Park
  City Math. Ser., vol.~9, AMS, 2001, pp.~407--464.

\bibitem[Han]{hansen-critical-one}
D.~Hansen, \emph{Iwasawa theory of overconvergent modular forms, {I}:
  {C}ritical-slope {$p$}-adic {$L$}-functions}, preprint, June 15, 2016.

\bibitem[Hid81]{hida-invent-1981}
H.~Hida, \emph{Congruence of cusp forms and special values of their zeta
  functions}, Invent. Math. \textbf{63} (1981), no.~2, 225--–261.

\bibitem[Kat92a]{kato-erl-zeta-values}
K.~Kato, \emph{The explicit reciprocity law and zeta values ({J}apanese)},
  S{\={u}}rikaisekikenky{\={u}}sho K{\={u}}ky{\={u}}roku \textbf{810} (1992),
  264--279,
  \href{http://hdl.handle.net/2433/82999}{http://hdl.handle.net/2433/82999}.

\bibitem[Kat92b]{kato-K2-modular-curves}
\bysame, \emph{{$K_2$} of modular curves and ``{$L(E, 1) \neq 0 \Rightarrow E
  (\mathbb{Q}) < \infty$}" ({J}apanese)}, S{\={u}}rikaisekikenky{\={u}}sho
  K{\={u}}ky{\={u}}roku \textbf{805} (1992), 238--248,
  \href{http://hdl.handle.net/2433/82917}{http://hdl.handle.net/2433/82917}.

\bibitem[Kat92c]{kato-hasse-weil}
\bysame, \emph{A plan of {I}wasawa theory for {H}asse--{W}eil {$L$}-functions
  ({J}apanese)}, S{\={u}}rikaisekikenky{\={u}}sho K{\={u}}ky{\={u}}roku
  \textbf{797} (1992), 88--101,
  \href{http://hdl.handle.net/2433/82778}{http://hdl.handle.net/2433/82778}.

\bibitem[Kat95]{kato-elliptic-curves-modular-forms}
\bysame, \emph{Elliptic curves: {I}wasawa theory for modular forms
  ({J}apanese)}, S{\={u}}rikaisekikenky{\={u}}sho K{\={u}}ky{\={u}}roku
  \textbf{925} (1995), 66--76,
  \href{http://hdl.handle.net/2433/59808}{http://hdl.handle.net/2433/59808}.

\bibitem[Kat99]{kato-euler-iwasawa-selmer}
\bysame, \emph{Euler systems, {I}wasawa theory, and {S}elmer groups}, Kodai
  Math. J. \textbf{22} (1999), no.~3, 313--372.

\bibitem[Kat02]{kato-icm-2002}
\bysame, \emph{Tamagawa number conjecture for zeta values}, Proceedings of the
  International Congress of Mathematicians (Beijing), vol.~2, Higher Ed. Press,
  2002, pp.~163--171.

\bibitem[Kat04]{kato-euler-systems}
\bysame, \emph{{$p$}-adic {H}odge theory and values of zeta functions of
  modular forms}, Ast\'{e}risque \textbf{295} (2004), 117--290.

\bibitem[Kima]{kim-kato}
C.-H. Kim, \emph{Refined applications of {K}ato's {E}uler systems for modular
  forms}, submitted, \href{https://arxiv.org/abs/2203.12157}{arXiv:2203.12157}.

\bibitem[Kimb]{kim-structure-selmer}
\bysame, \emph{The structure of {S}elmer groups and the {I}wasawa main
  conjecture for elliptic curves}, submitted,
  \href{https://arxiv.org/abs/2203.12159}{arXiv:2203.12159}.

\bibitem[Kim23]{kim-p-converse}
\bysame, \emph{On the soft {$p$}-converse to a theorem of {G}ross--{Z}agier and
  {K}olyvagin}, Math. Ann. \textbf{387} (2023), 1961--1968.

\bibitem[KKS20]{kks}
C.-H. Kim, M.~Kim, and H.-S. Sun, \emph{On the indivisibility of derived
  {K}ato's {E}uler systems and the main conjecture for modular forms}, Selecta
  Math. (N.S.) \textbf{26} (2020), no.~31, 47 pages.

\bibitem[KN20]{kim-nakamura}
C.-H. Kim and K.~Nakamura, \emph{Remarks on {K}ato's {E}uler systems for
  elliptic curves with additive reduction}, J. Number Theory \textbf{210}
  (2020), 249--279.

\bibitem[Kob03]{kobayashi-thesis}
S.~Kobayashi, \emph{Iwasawa theory for elliptic curves at supersingular
  primes}, Invent. Math. \textbf{152} (2003), no.~1, 1--36.

\bibitem[Kol90]{kolyvagin-euler-systems}
V.~Kolyvagin, \emph{Euler systems}, The {G}rothendieck {F}estschrift {V}olume
  {II} (P.~Cartier, L.~Illusie, N.~M. Katz, G.~Laumon, Y.~Manin, and K.~A.
  Ribet, eds.), Progr. Math., vol.~87, Birkh\"{a}user {B}oston, 1990,
  pp.~435--483.

\bibitem[Kol91]{kolyvagin-selmer}
\bysame, \emph{On the structure of {S}elmer groups}, Math. Ann. \textbf{291}
  (1991), no.~2, 253--259.

\bibitem[KP07]{kurihara-pollack}
M.~Kurihara and R.~Pollack, \emph{Two {$p$}-adic {$L$}-functions and rational
  points on elliptic curves with supersingular reduction}, {$L$}-functions and
  {G}alois representations (Cambridge) (D.~Burns, K.~Buzzard, and
  J.~Nekov\'{a}\v{r}, eds.), London Math. Soc. Lecture Note Ser., vol. 320,
  Cambridge University Press, 2007, pp.~300--332.

\bibitem[Kur02]{kurihara-invent}
M.~Kurihara, \emph{On the {T}ate {S}hafarevich groups over cyclotomic fields of
  an elliptic curve with supersingular reduction {I}}, Invent. Math.
  \textbf{149} (2002), 195--224.

\bibitem[Kur14a]{kurihara-munster}
\bysame, \emph{Refined {I}wasawa theory for {$p$}-adic representations and the
  structure of {S}elmer groups}, M{\"{u}}nster J. of Math. \textbf{7} (2014),
  no.~1, 149--223.

\bibitem[Kur14b]{kurihara-iwasawa-2012}
\bysame, \emph{The structure of {S}elmer groups of elliptic curves and modular
  symbols}, Iwasawa Theory 2012: State of the Art and Recent Advances
  (T.~Bouganis and O.~Venjakob, eds.), Contrib. Math. Comput. Sci., vol.~7,
  Springer, 2014, pp.~317--356.

\bibitem[Kur24]{kurihara-analytic-quantities}
\bysame, \emph{Some anayltic quantities yielding arithemtic information about
  elliptic curves}, Arithmetic Geometry---{P}roceedings of the {I}nternational
  {C}olloquium, {M}umbai, 2020, Tata Inst. Fundam. Res. Stud. Math., vol.~24,
  Hindustan Book Agency, New Delhi, 2024, pp.~345--384.

\bibitem[LZ]{loeffler-zerbes-arizona}
D.~Loeffler and S.~L. Zerbes, \emph{Euler systems}, Arizona Winter School 2018
  notes.

\bibitem[LZ23]{loeffer-zerbes-icm}
\bysame, \emph{Euler systems and the {B}loch--{K}ato conjecture for automorphic
  {G}alois representations}, I{CM}---{I}nternational {C}ongress of
  {M}athematicians. {V}ol. {III}. {S}ections 1--4, EMS Press, Berlin, [2023]
  \copyright 2023, pp.~1918--1939. \MR{4680305}

\bibitem[Man72]{manin-parabolic}
J.~I. Manin, \emph{Parabolic points and zeta functions of modular curves},
  Math. USSR-Izv. \textbf{6} (1972), no.~1, 19--64, translated by N. Koblitz.

\bibitem[Maz72]{mazur-IMC-for-E}
B.~Mazur, \emph{Rational points of abelian varieties with values in towers of
  number fields}, Invent. Math. \textbf{18} (1972), 183--266.

\bibitem[Mil06]{milne-adt}
J.~Milne, \emph{Arithmetic duality theorems}, second ed., BookSurge, LLC, 2006.

\bibitem[MR04]{mazur-rubin-book}
B.~Mazur and K.~Rubin, \emph{{K}olyvagin {S}ystems}, Mem. Amer. Math. Soc.,
  vol. 168, American {M}athematical {S}ociety, March 2004.

\bibitem[MR16]{mazur-rubin-control}
\bysame, \emph{Controlling {S}elmer groups in the higher core rank case}, J.
  Th{\'{e}}or. Nombres Bordeaux \textbf{28} (2016), no.~1, 145--183.

\bibitem[MTT86]{mtt}
B.~Mazur, J.~Tate, and J.~Teitelbaum, \emph{On {$p$}-adic analogues of the
  conjectures of {B}irch and {S}winnerton-{D}yer}, Invent. Math. \textbf{84}
  (1986), no.~1, 1--48.

\bibitem[Nak14]{nakamura-local-iwasawa}
K.~Nakamura, \emph{Iwasawa theory of de {R}ham {$(\varphi , \Gamma )$}-modules
  over the {R}obba ring}, J. Inst. Math. Jussieu \textbf{13} (2014), no.~1,
  65--118.

\bibitem[Nak23]{nakamura-kato-deformation}
\bysame, \emph{Zeta morphisms for rank two universal deformations}, Invent.
  Math. \textbf{234} (2023), 171--290.

\bibitem[Neu99]{neukirch}
J.~Neukirch, \emph{Algebraic {N}umber {T}heory}, Grundlehren Math. Wiss., vol.
  322, Springer-Verlag Berlin Heidelberg, 1999, Translated from the German by
  N. Schappacher.

\bibitem[Och05]{ochiai-deformation}
T.~Ochiai, \emph{Euler system for {G}alois deformations}, Ann. Inst. Fourier
  (Grenoble) \textbf{55} (2005), no.~1, 113--146.

\bibitem[Och16]{ochiai-book-two}
\bysame, \emph{Iwasawa theory and its perspective {II} ({J}apanese)}, Iwanami
  Studies in Advanced Mathematics, Iwanami Shoten, 2016.

\bibitem[Och24]{ochiai-book-two-translated}
\bysame, \emph{Iwasawa theory and its perspective. {V}ol. 2.}, Math. Surveys
  Monogr., vol. 280, American Mathematical Society, Providence, RI, 2024,
  Translated from the Japanese.

\bibitem[Pan03]{panchishkin-2003}
A.A. Panchishkin, \emph{Two variable {$p$}-adic {$L$}-functions attached to
  eigenfamilies of positive slope}, Invent. math. \textbf{154} (2003),
  551--615.

\bibitem[Pol03]{pollack-thesis}
R.~Pollack, \emph{On the {$p$}-adic {$L$}-function of a modular form at a
  supersingular prime}, Duke Math. J. \textbf{118} (2003), no.~3, 523--558.

\bibitem[Pol14]{pollack-oms}
\bysame, \emph{Overconvergent modular symbols}, Computations with {M}odular
  {F}orms ({H}eidelberg 2011), Contrib. Math. Comput. Sci., vol.~6, Springer,
  2014.

\bibitem[PR94]{perrin-riou-local-iwasawa}
B.~Perrin-Riou, \emph{Th\'{e}orie d'{I}wasawa des repr\'{e}sentations
  {$p$}-adiques sur un corps local}, Invent. Math. \textbf{115} (1994),
  81--149, with an appendix by J.-M. Fontaine.

\bibitem[PR98]{perrin-riou-euler-systems}
\bysame, \emph{Syst\`{e}mes d'{E}uler {$p$}-adiques et th\'{e}orie
  d'{I}wasawa}, Ann. Inst. Fourier (Grenoble) \textbf{48} (1998), no.~5,
  1231--1307.

\bibitem[Roh84]{rohrlich-nonvanishing}
D.~Rohrlich, \emph{On {$L$}-functions of elliptic curves and cyclotomic
  towers}, Invent. Math. \textbf{75} (1984), 409--423.

\bibitem[Roh88]{rohrlich-nonvanishing-2}
\bysame, \emph{{$L$}-functions and division towers}, Math. Ann. \textbf{281}
  (1988), 611--632.

\bibitem[Rub87]{rubin-tate-shafarevich}
K.~Rubin, \emph{Tate--{S}hafarevich groups and {$L$}-functions of elliptic
  curves with complex multiplication}, Invent. Math. \textbf{89} (1987),
  527--559.

\bibitem[Rub88]{rubin-main-conj-img-quad-fields}
\bysame, \emph{On the main cojecture of {I}wasawa theory for imaginary
  quadratic fields}, Invent. Math. \textbf{93} (1988), no.~3, 701--714.

\bibitem[Rub98]{rubin-es-mec}
\bysame, \emph{Euler systems and modular elliptic curves}, Galois
  representations in arithmetic algebraic geometry ({D}urham, 1996) (A.~Scholl
  and R.~Taylor, eds.), London Math. Soc. Lecture Note Ser., vol. 254,
  Cambridge University Press, 1998, pp.~351--367.

\bibitem[Rub00]{rubin-book}
\bysame, \emph{Euler {S}ystems}, Ann. of Math. Stud., vol. 147, Princeton
  {U}niversity {P}ress, 2000.

\bibitem[Sak]{sakamoto-p-3}
R.~Sakamoto, \emph{The theory of {K}olyvagin systems for {$p=3$}}, preprint.

\bibitem[Sak18]{sakamoto-stark-systems}
\bysame, \emph{Stark systems over {G}orenstein local rings}, Algebra Number
  Theory \textbf{12} (2018), no.~10, 2295--2326.

\bibitem[Sak22]{sakamoto-p-selmer}
\bysame, \emph{$p$-{S}elmer groups and modular symbols}, Doc. Math. \textbf{27}
  (2022), 1891--1922.

\bibitem[Sch98]{scholl-kato}
A.~Scholl, \emph{An introduction to {K}ato's {E}uler systems}, Galois
  representations in arithmetic algebraic geometry ({D}urham, 1996) (A.~Scholl
  and R.~Taylor, eds.), London Math. Soc. Lecture Note Ser., vol. 254,
  Cambridge University Press, 1998.

\bibitem[Ser72]{serre-open-image}
J.-P. Serre, \emph{Propri{\'{e}}t{\'{e}}s galoisiennes des points d'ordre fini
  des courbes elliptiques}, Invent. Math. \textbf{15} (1972), no.~4, 259--331.

\bibitem[Sil09]{silverman}
J.~Silverman, \emph{The {A}rithmetic of {E}lliptic {C}urves}, 2nd ed., Grad.
  Texts in Math., vol. 106, Springer-{V}erlag, 2009.

\bibitem[Ski20]{skinner-converse}
C.~Skinner, \emph{A converse to a theorem of {G}ross, {Z}agier, and
  {K}olyvagin}, Ann. of Math. \textbf{191} (2020), no.~2, 329--354.

\bibitem[Spr12]{sprung-ap-nonzero}
F.~Sprung, \emph{Iwasawa theory for elliptic curves at supersingular primes: A
  pair of main conjectures}, J. Number Theory \textbf{132} (2012), no.~7,
  1483--1506.

\bibitem[SU14]{skinner-urban}
C.~Skinner and E.~Urban, \emph{The {I}wasawa main conjectures for
  {$\mathrm{GL}_2$}}, Invent. Math. \textbf{195} (2014), no.~1, 1--277.

\bibitem[Tat74]{tate-arithmetic-elliptic-curves}
J.~Tate, \emph{The arithmetic of elliptic curves}, Invent. Math. \textbf{23}
  (1974), 179--206.

\bibitem[TW95]{taylor-wiles}
R.~Taylor and A.~Wiles, \emph{Ring theoretic properties of certain {H}ecke
  algebras}, Ann. of Math. (2) \textbf{141} (1995), 553--572.

\bibitem[Wan13]{wang_kato}
S.~Wang, \emph{Le syst\`{e}me d'{E}uler de {K}ato}, J. Th{\'{e}}or. Nombres
  Bordeaux \textbf{25} (2013), no.~3, 677--758.

\bibitem[Wan14]{wang-kato-in-families-1}
\bysame, \emph{Le syst\`{e}me d'{E}uler de {K}ato en famille ({I})}, Comment.
  Math. Helv. \textbf{89} (2014), 819--865.

\bibitem[Wan21]{wang-kato-in-families-2}
\bysame, \emph{Le syst\`{e}me d'{E}uler de {K}ato en famille ({II})}, Acta
  Math. Sin. (Engl. Ser.) \textbf{37} (2021), no.~1, 173--204.

\bibitem[Was97]{washington}
L.~Washington, \emph{Introduction to {C}yclotomic {F}ields}, second edition
  ed., Grad. Texts in Math., vol.~83, Springer-{V}erlag, 1997.

\bibitem[Wei13]{weibel-K-book}
C.~Weibel, \emph{The {$K$}-book}, Grad. Stud. Math., vol. 145, American
  Mathematical Society, Providence, RI, 2013.

\bibitem[Wil95]{wiles}
A.~Wiles, \emph{Modular elliptic curves and {F}ermat's last theorem}, Ann. of
  Math. (2) \textbf{141} (1995), 443--551.

\bibitem[Zha14]{wei-zhang-mazur-tate}
W.~Zhang, \emph{Selmer groups and the indivisibility of {H}eegner points},
  Camb. J. Math. \textbf{2} (2014), no.~2, 191--253.

\end{thebibliography}

\end{document}